\newtheorem{thm}{Theorem}[section]
\newtheorem{cor}[thm]{Corollary}
\newtheorem{lemma}[thm]{Lemma}
\newtheorem{prop}[thm]{Proposition}
\theoremstyle{definition}
\theoremstyle{remark}
\newtheorem{remark}[thm]{Remark}
\newtheorem{notation}[thm]{Notation}
\newcommand{\R}{\mathbb R}
\newcommand{\C}{\mathbb C}
\newcommand{\Z}{\mathbb Z}
\newcommand{\N}{\mathbb N}
\newlength{\intwidth}
\title[Accumulation points of recurrence coefficients]{Orthogonal polynomials on several intervals:\\
accumulation points of recurrence coefficients and of zeros}
\author[F. Peherstorfer]{F. Peherstorfer$^1$}
\thanks{This work was supported
by the Austrian Science Fund FWF, project no. P20413-N18}
\thanks{$^1$The last modifications and corrections of this manuscript
were done by the author in the two months preceding this passing away
in November 2009. The manuscript is not published elsewhere 
(submitted by P. Yuditskii and I. Moale).}
\begin{document}

\begin{abstract}
Let $E = \cup_{j = 1}^l [a_{2j-1},a_{2j}],$ $a_1 < a_2 < ... < a_{2l},$ $l \geq 2$ and set
$\mbox{\boldmath$\omega$}(\infty) =
(\omega_1(\infty),...,\omega_{l-1}(\infty))$, where $\omega_j(\infty)$ is the
harmonic measure of $[a_{2 j - 1}, a_{2 j}]$ at infinity. Let $\mu$ be a measure which is
on $E$ absolutely continuous
and satisfies Szeg\H{o}'s-condition and has at most a finite number of point measures outside $E$,
and denote by $(P_n)$ and $({\mathcal Q}_n)$ the orthonormal polynomials and their associated
Weyl solutions with respect to $d \mu$, satisfying the recurrence relation
$\sqrt{\lambda_{2 + n}} y_{1 + n} = (x - \alpha_{1 + n}) y_n -
\sqrt{\lambda_{1 + n}} y_{-1 + n}$. We show that the recurrence coefficients have
topologically the same convergence behavior as the sequence
$(n \mbox{\boldmath$\omega$}(\infty))_{n\in \mathbb N}$ modulo $1$;
More precisely, putting
$( \mbox{\boldmath$\alpha$}^{l-1}_{1 + n},
\mbox{\boldmath$\lambda$}^{l-1}_{2 + n} ) = $ $( \alpha_{[\frac{l-1}{2}]+1+n},...,$
$\alpha_{1+n},...,$ $\alpha_{-[\frac{l-2}{2}]+1+n},$ $\lambda_{[\frac{l-2}{2}]+2+n},$
$...,\lambda_{2+n},$ $...,$ $\lambda_{-[\frac{l-1}{2}]+2+n} )$
we prove that $( \mbox{\boldmath$\alpha$}^{l-1}_{1 + n_\nu},$
$\mbox{\boldmath$\lambda$}^{l-1}_{2 + n_\nu} )_{\nu \in \mathbb N}$ converges
if and only if $(n_\nu \mbox{\boldmath$\omega$}(\infty))_{\nu \in \mathbb N}$ converges modulo $1$
and we give an explicit homeomorphism between the sets of accumulation points of
$( \mbox{\boldmath$\alpha$}^{l-1}_{1 + n},
\mbox{\boldmath$\lambda$}^{l-1}_{2 + n} )$ and $(n\mbox{\boldmath$\omega$}(\infty))$
modulo $1$. As one of the consequences there is a homeomorphism from the so-called
gaps $\sf{X}_{j=1}^{l-1}$ $([a_{2j},a_{2j+1}]^{+} \cup [a_{2j},a_{2j+1}]^{-})$
on the Riemann surface $y^2 = \prod_{j = 1}^{2 l}(x - a_j)$ into the set of accumulation
points of the sequence
$( \mbox{\boldmath$\alpha$}^{l-1}_{1 + n}, \mbox{\boldmath$\lambda$}^{l-1}_{2 + n} )$
if the harmonic measures $\omega_1(\infty),...,\omega_{l-1}(\infty),$ $1$
are linearly independent over the rational
numbers $\mathbb Q.$ Furthermore it is demonstrated, loosely speaking, that the convergence
behavior
of the sequence of recurrence coefficients
$( \mbox{\boldmath$\alpha$}^{l-1}_{1 + n},$
$\mbox{\boldmath$\lambda$}^{l-1}_{2 + n} )$
and of the sequence of zeros of the orthonormal polynomials and
Weyl solutions outside the spectrum is
topologically the same. The above results are proved by deriving first corresponding
statements for the accumulation points of the vector of moments of the
diagonal Green's functions, that is, of the sequence
$( \int x P^2_{n} d \mu, ..., \int x^{l-1} P^2_{n} d \mu,$
$\sqrt{\lambda_{2 + n}}$ $ \int x P_{1 + n} P_{n} d \mu,$
$..., $ $\sqrt{\lambda_{2 + n}}\int x^{l-1}
P_{1 + n} P_{n} d \mu)_{n \in \mathbb N}.$
\end{abstract}

\maketitle

\section{Introduction}

Let $l \in \mathbb N$ and $a_k \in \mathbb R$ for $k = 1,...,2l, a_1 < a_2 < ... < a_{2l}.$
Put $$E = \bigcup\limits_{k = 1}^l E_k, {\rm \ where \ } E_k = [a_{2k-1}, a_{2k}], {\rm \ and \ }
H(x) = \prod\limits_{j=1}^{2l} (x - a_j)$$ and henceforth
let us choose that branch of $\sqrt{H}$ for which $\sqrt{H(x)} > 0$ for $ x \in (a_{2l},\infty).$
For convenience we set
\begin{equation}\label{t1}
    h(x) :=
    \left\{
    \begin{split}
        & \frac{(\sqrt{H})^{+}(x) - (\sqrt{H})^{-}(x)}{2 i} =
          (-1)^{l-k} \sqrt{|H(x)|} {\rm \ for \ } x \in E_k\\
        & 0 {\rm \quad elsewhere }
    \end{split}
    \right.
\end{equation}
where, as usual, $f^{\pm}(x)$ denote the limiting values from the upper and lower
half plane, respectively. Note that $(\sqrt{H})^{-}(x) = - (\sqrt{H})^{+}(x)$ for
$x \in E.$ By $\phi(z,z_0)$ we denote a so-called complex Green's
function for $\overline{\mathbb C} \backslash E$ uniquely determined up to a
multiplication constant of absolute value one (chosen conveniently below), that is,
$\phi(z,z_0)$ is a multiple valued function which is analytic on $\overline{\mathbb C}
\backslash E$ up to a simple pole at $z = z_0,$ has no zeros on $\overline{\mathbb C}
\backslash E$ and satisfies $|\phi(z,z_0)| \to 1$ for $z \to x \in E$
or in other words $\log |\phi(z,z_0)|$ is the (potential theoretic) Green's function with pole at $z = z_0 \in
\overline{\mathbb C} \backslash E$, as usual denoted by $g(z,z_0).$ In the case under
consideration, as it is known \cite{PehIMNR, Wid}, a complex Green's function may be represented as
\begin{equation}\label{t2}
    \phi(z) := \phi(z,\infty) = \exp \left( \int_{a_{2l}}^{z} r_{\infty}(x) \frac{dx}{\sqrt{H(x)}} \right)
\end{equation}
where $r_\infty$ is the unique monic polynomial of degree $l-1$ such that
\begin{equation}\label{t3}
    \int_{a_{2j}}^{a_{2j+1}} r_\infty(x) \frac{dx}{\sqrt{H(x)}} = 0 {\rm \ for \ } j = 0,...,l-1;
\end{equation}
hence
\begin{equation}\label{t4}
    r_\infty(x) = \prod\limits_{j=1}^{l-1} (x - c_j), \ \ c_j \in (a_{2j},a_{2j+1}), j = 1,...,l-1.
\end{equation}
Recall that the so-called capacity of $E$ is given by
\begin{equation}\label{C}
   cap (E) = \lim\limits_{z \to \infty} \left| \frac{z}{\phi(z,\infty)} \right|
\end{equation}
The density of the equilibrium distribution of $E,$ denoted by $\rho,$ becomes
\begin{equation}\label{t5}
   \rho(x) = \frac{1}{2 \pi i} \left( (\frac{\phi^{'}}{\phi} )^+(x) -
   (\frac{\phi^{'}}{\phi}) ^- (x) \right)= \frac{r_\infty(x)}{\pi h(x)}.
\end{equation}

By $\omega(z, B, \overline{\mathbb C} \backslash E)$ we denote the harmonic measure
of $B \subseteq E$ with respect to $\overline{\mathbb C} \backslash E$ at $z,$
which is that harmonic and bounded function on $\overline{\mathbb C} \backslash E_l$
which satisfies for $\xi \in E_l$ that
$\lim_{z \to \xi} \omega(z,B,\overline{\mathbb C} \backslash E_l) = i_B(\xi),$
where $i_B$ denotes the characteristic function of $B.$ For abbreviation we put
\begin{equation*}
    \omega(z, E_k;\overline{\mathbb C} \backslash E) = \omega_k(z).
\end{equation*}
Recall that, $k = 1,...,l,$
\begin{equation*}
    \omega_k(\infty) = \int_{a_{2 k - 1}}^{a_{2 k}} \rho(x) dx.
\end{equation*}

In the following we say that the function $w$ is on $E$ from the Szeg\H{o}-class,
written $w \in {\rm Sz}(E),$ if
\begin{equation}\label{t6}
    \int_E \log \left( w(x) \right) \rho(x) dx > - \infty.
\end{equation}
In this paper we consider positive measures of the form
\begin{equation}\label{7_2}
   d\mu(x) = w(x) dx + \sum\limits_{j=1}^m \mu_j \delta_{d_j}(x)
\end{equation}
where $w \in Sz(E),$ the mass points $d_j,$ $j = 1,...,m,$ are from $\mathbb R \backslash E$
and mutually disjoint and $\mu_j \in {\mathbb R}^{+}.$
By $P_n$ we denote the polynomial of degree $n$ orthonormal with respect to $w$ i.e.,
$$ \int_E P_m(x) P_n(x) d\mu(x) = \delta_{m,n}$$
and the monic orthogonal polynomial are denoted by $p_n(x).$ It is well known that the monic
orthogonal polynomials satisfy a three term recurrence relation
\begin{equation}\label{62}
   p_n(x) = (x - \alpha_n) p_{n-1}(x) - \lambda_n p_{n-2}(x)
\end{equation}
with $p_{-1}(x) := 0$ and $p_0(x) = 1.$
For measures of the form \eqref{7_2} it follows, see \cite[Cor. 6.1]{PehYud}
(in fact more general sets $E$ and measures are considered there),
that the recurrence coefficients are almost periodic in the limit,
more precisely, that there exist real analytic functions
$\alpha$ and $\lambda$ on the torus $[0,1]^{l-1}$ and a constant
$\mbox{\boldmath$c$} \in \R^{l-1}$,
depending on the measure $\mu$, such that
\begin{equation}
\label{F1}
\alpha _{n+1} = \alpha (n \mbox{\boldmath$\omega$}(\infty) -
\mbox{\boldmath$c$})+o(1) \quad \text{and} \quad \lambda _{n+2} =
\lambda (n \mbox{\boldmath$\omega$}(\infty) -
\mbox{\boldmath$c$})+o(1),
\end{equation}
where $\mbox{\boldmath$\omega$}(\infty) = (\omega_1(\infty),...,\omega_{l-1}(\infty))$;
for more details see the remark
following Proposition \ref{prop1} below.
By \eqref{F1} one gets a first, quite good view into
the convergence behavior of the $\alpha _n$'s and $\lambda _n$'s, but by far
not a complete one, since the functions $\alpha$ and $\lambda$
and their mapping properties are not known. The more it is not clear
whether the recurrence coefficients and $(\mbox{\boldmath$\omega$}(\infty))_{n\in \N}$
modulo $1$ have the same topological convergence behavior; that is, that subsequences
converge simultaneously  and that there is a homeomorphism between the set of
accumulation points. The goal of this paper is to show that, under a proper point
of view, there is such a topological equivalence; also between the recurrence
coefficients and the zeros of the polynomials and Weyl solutions outside the
spectrum. As an immediate consequence it follows that the problem of convergence
of the recurrence coefficients is topologically equivalent to the classical
problem in number theory
to find diophantic approximations of the harmonic measures $\mbox{\boldmath$\omega$}(\infty)$,
\cite[V. Kapitel]{Per}, \cite{Hla, Kok}.




Instead of continuing to study the accumulation points
of the recurrence coefficients with the help of \eqref{F1}, which looks rather hopeless,
we may also investigate that ones of the sequences of moments of the diagonal Green's
functions, that is, $ (\int x P^2_n, \sqrt{\lambda_{n+2}}
\int x P_{n + 1} P_n,\int x^2 P^2_n,\sqrt{\lambda_{n+2}}
\int x^2 P_{n + 1} P_n,...,$ $ \int x^m P^2_n,\sqrt{\lambda_{n+2}}$
$ \int x^m P_{n + 1} P_n, ...)_{n \in \mathbb N} $ since it can be shown
(see Proposition \ref{propP1}) that there is a unique correspondence between
such vectors of moments and the vector of recurrence coefficients
$(...,\alpha_{[\frac{m-1}{2}] + 1 + n},\lambda_{[\frac{m-2}{2}] + 2 + n},...,$
$\alpha_{1+n},$ $\lambda_{2+n},...,$ $\lambda_{-[\frac{m-1}{2}]+2+n},
\alpha_{-[\frac{m-2}{2}] + 1 + n},$ $...).$ In fact it even suffices to consider
the truncated vector sequence $(\int x P^2_n,$ $ \sqrt{\lambda_{n+2}}\int x P_{n+1}
P_n,$ $...,$ $\int x^{l-1} P^2_n,$ \\ $ \sqrt{\lambda_{n+2}}$ $\int x^{l-1} P_{n+1} P_n )
_{n \in \mathbb N}$ since, by Corollary \ref{corR31}, it carries all information
already. We show (Theorem \ref{thm4.1}; concerning
the recurrence coefficients see Theorem \ref{5.4new}) that there is a homeomorphism
between
the set of accumulation
points of the truncated vector sequence of moments (respectively, of recurrence
coefficients) and of the sequence $(n \mbox{\boldmath$\omega$}(\infty))$ modulo $1$
and that convergence holds for the same subsequences. In particular this implies that
the set of accumulation points of the truncated vector sequence of moments
is homeomorph to a $l-1$ dimensional
torus if the harmonic measures are linearly independent over $\mathbb{Q}$, where the
homeomorphism is given explicitly even.

Also of special interest is the fact that there is a unique correspondence between the
accumulation points of the discussed truncated vector sequence of moments and the
accumulation points of zeros of the orthogonal polynomials and Weyl solutions outside
the spectrum, see Theorem \ref{thm4.2}.

Next let us represent the weight function $w$ in the form
\begin{equation}\label{xxx}
     w(x) = \frac{w_0(x)}{\rho(x)}
\end{equation}
and let ${\mathcal W}_o(z)$ be that function which is analytic on
$\tilde{\Omega} := \overline {\mathbb C} \backslash E,$ has no zeros
and poles there, is normalized by ${\mathcal W} (\infty) > 0$ and
satisfies the boundary condition
\begin{equation}\label{t7}
   w_0(x) = 1 / {\mathcal W}^{+}_0(x) {\mathcal W}^{-}_0(x).
\end{equation}
For $\mu$ of the form \eqref{7_2} with $w \in {\rm Sz}(E)$ it has been shown
that $P_n/\phi^n$ is uniformly asymptotically equivalent
to the solution of a certain extremal problem, more precisely, that
on compact subsets of
$\Omega := \tilde{\Omega} \backslash \{ d_1,...,d_m \}$
\begin{equation}\label{t8}
    \frac{P_n(z)}{\phi^n(z)} \sim \frac{U {\mathcal W}_0(z) \phi^{'}(z)}
    {\prod\limits_{j=1}^{m} \phi(z;d_j)}
          \left( \frac{
          \prod\limits_{j=1}^{l-1} \phi(z;c_j)
          \prod\limits_{j=1}^{l-1} \phi(z;x_{j,n})^{\delta_{j,n}}
          \prod\limits_{j=1}^{l-1} (z-x_{j,n})}{ r_\infty(z)}
          \right)^{1/2}
\end{equation}
where $U$ is a constant and the points $x_{j,n}$ and the
$\delta_{j,n} \in \{ \pm 1 \}$ are uniquely determined by the conditions that for
$k=1,...,l-1$
\begin{equation}\label{t9}
\begin{split}
    \sum\limits_{j=1}^{l-1} \delta_{j,n} \omega_{k}(x_{j,n}) =
    & - (2 n - l + 1) \omega_k(\infty) - \frac{2}{\pi} \int_E \log \left(
      \frac{w_0(\xi)}{\rho(\xi)} \right)
      \frac{\partial \omega_k(\xi)}{\partial n^{+}_\xi} d\xi \\
    & + 2 \sum\limits_{j=1}^m \omega_k(d_j) {\rm \qquad modulo \ } 2,
\end{split}
\end{equation}
recall that $\omega_k(x) := \omega(x; E_k, \tilde{\Omega})$ is the harmonic measure
of $E_k$ with respect to $\tilde{\Omega}$ at the point $x$ and $n^{+}_\xi$ is the
normal vector at $\xi$ pointing in the upper half plane. It turns out that $x_{j,n}
\in [a_{2j},a_{2j+1}]$ for $j = 1,...,l-1$ and thus
\begin{equation}\label{t10}
   \hat{g}_{(n)}(x;w) := \hat{g}_{(n)}(x) := \prod_{j=1}^{l-1} (x - x_{j,n})
\end{equation}
has exactly one zero in each gap $[a_{2j},a_{2j+1}],$ $j = 1,...,l-1.$ The
asymptotic representation \eqref{t8} is due to Widom
\cite[Theorem 6.2, p. 168 and pp. 175-176]{Wid} if there appear no point measures,
i.e., if $m = 0.$ The case of point measures has been first studied in \cite{Rah1}
for two intervals. Asymptotics for more general sets (so-called homogeneous sets)
and measures, including the above ones, have been given by P. Yuditskii and the
author in \cite{PehYud, PehYud2}.

The so-called Weyl solutions or functions of the second kind
\begin{equation}\label{t11}
   Q_n(y) = \int_E \frac{p_n(x)}{y - x} d\mu(x), {\rm \ respectively, \ } {\mathcal Q}_n(y) =
   \int_E \frac{P_n(x)}{y-x} d\mu(x)
\end{equation}
build another basis system of solutions of the recurrence relation \eqref{62}.
In \cite[Section 3]{PehYud} also asymptotics for Weyl solutions have been derived,
more precisely, using the terminology of this paper, the following uniform asymptotic
equivalence on compact subsets of $\Omega$ has been shown
\begin{equation}\label{t12}
    {\mathcal Q}_n(z) \sim \frac{\prod\limits_{j=1}^{m}\phi(z;d_j)}{U {\mathcal W}_0(z) \phi^{n+1}(z)}
    \left( \frac{\hat{g}_{(n)}(z)}{r_\infty(z) \prod_{j=1}^{l-1} \phi(z;c_j)
    \prod_{j=1}^{l-1} \phi(z;x_{j,n})^{\delta_{j,n}}}\right)^{1/2}
\end{equation}
where ${\mathcal W}_0 {\mathcal Q}_n$ is denoted by $h_n$ respectively $h$ in \cite{PehYud}.

Finally we mention that other questions about finite gap Jacobi matrices are under
investigations by J.S. Christiansen, B. Simon and M. Zinchenko \cite{ChrSimZin}
and that a huge part of the forthcoming book by B. Simon \cite{Sim1}, see also
\cite{Sim2}, is devoted to this topic. The approach is based on automorphic
functions similarly as in \cite{PehYud, PehYud2, SodYud} and is different
from that one used here.

\section{Asymptotics for principal and secondary diagonal Green's function}

The diagonal matrix elements of the resolvent $({\mathcal J} - z)^{-1}$
are the so-called Green's functions
$G(z,n,m) = \langle \delta_n,({\mathcal J} - z)^{-1} \delta_m \rangle,$
where as usual $\mathcal J$ denotes the Jacobi matrix associated with
$(\alpha_j)$ and $(\lambda_j).$ More precisely
using the orthogonality of $P_n,$ respectively, $P_{n + 1}$ one obtains
\begin{equation}\label{G1}
     P_n(z) {\mathcal Q}_n (z) = \int \frac{P^2_n(x)}{z - x} d\mu(x) =: G(z, n, n)
\end{equation}
\begin{equation}\label{G2}
     P_n(z) {\mathcal Q}_{n+1} (z) = \int \frac{P_{n+1}(x)P_n(x)}{z - x} d\mu(x) =: G(z, n+1, n)
\end{equation}
and
\begin{equation}\label{G3}
     P_{n+1}(z) {\mathcal Q}_n (z) =  \int \frac{P_{n+1}(x)P_n(x)}{z - x} d\mu(x) +
     \frac{1}{\sqrt{\lambda_{n + 2}}} =: G(z, n, n+1).\\
\end{equation}
Recall that in the case under consideration the $\lambda _n$'s are bounded away from zero. Using
Schwarz's inequality it follows that the three sequences
$(P_n {\mathcal Q}_n), (P_{n} {\mathcal Q}_{n+1})$
and $(P_{n+1} {\mathcal Q}_n)$ are bounded and analytic on compact subsets of
$\mathbb R \backslash {\rm supp}(\mu),$ hence they are so-called normal families.
Combining \eqref{t8} and \eqref{t12} we obtain with the help of \eqref{t2} the following asymptotics
for the diagonal Green's function.

\begin{cor}\label{cor1}
Let $\mu$ be given by \eqref{7_2} with $w \in {\rm Sz}(E).$ Then uniformly on compact
subsets of $\Omega$ there holds
\begin{equation}\label{t13}
   (P_n {\mathcal Q}_n)(z) = \frac{\hat{g}_{(n)}(z)}{\sqrt{H(z)}} + o(1).
\end{equation}
\end{cor}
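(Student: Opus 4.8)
The plan is to obtain \eqref{t13} by simply multiplying the two known asymptotic representations, \eqref{t8} for $P_n/\phi^n$ and \eqref{t12} for $\mathcal Q_n$, and simplifying. I would start from the identity $P_n\mathcal Q_n=\phi^{-1}\,(P_n/\phi^n)\,(\phi^{n+1}\mathcal Q_n)$ and substitute the right-hand sides of \eqref{t8} and \eqref{t12}. In the resulting product the factors $U$, $\mathcal W_0(z)$, $\prod_{j=1}^m\phi(z;d_j)$, $\prod_{j=1}^{l-1}\phi(z;c_j)$ and $\prod_{j=1}^{l-1}\phi(z;x_{j,n})^{\delta_{j,n}}$ cancel pairwise; the product $\prod_{j=1}^{l-1}(z-x_{j,n})$ appearing under the square root in \eqref{t8} combines with $\hat g_{(n)}(z)=\prod_{j=1}^{l-1}(z-x_{j,n})$ (see \eqref{t10}) appearing under the square root in \eqref{t12} to give $\hat g_{(n)}(z)^2/r_\infty(z)^2$; and the factor $\phi'(z)$ from \eqref{t8} together with the prefactor $\phi^{-1}(z)$ yields $\phi'(z)/\phi(z)$. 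Invoking \eqref{t2}, which gives $\phi'(z)/\phi(z)=r_\infty(z)/\sqrt{H(z)}$, the product after cancellation equals $r_\infty(z)\hat g_{(n)}(z)/\bigl(r_\infty(z)\sqrt{H(z)}\bigr)=\hat g_{(n)}(z)/\sqrt{H(z)}$. One should remark that although $\phi$, the $\phi(\cdot;c_j)$ and the $\phi(\cdot;x_{j,n})$ are multi-valued on $\overline{\mathbb C}\setminus E$, each of them enters only in combinations that are single-valued, so the computation is legitimate on all of $\Omega$, as is the final expression $\hat g_{(n)}/\sqrt H$, $\sqrt H$ being single-valued there.

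The step that needs the most care — and the one I would regard as the main (and only real) obstacle — is the branch/analytic-continuation bookkeeping in the square roots. After the cancellation one is left with $\bigl(\hat g_{(n)}(z)^2/r_\infty(z)^2\bigr)^{1/2}$, which must be identified with $+\hat g_{(n)}(z)/r_\infty(z)$ rather than its negative. I would settle this by evaluating on $(a_{2l},\infty)$, where all of $\phi(\cdot;d_j)$, $\phi(\cdot;c_j)$, $\phi(\cdot;x_{j,n})$, $(\cdot-x_{j,n})$, $r_\infty$, $\hat g_{(n)}$ and $\sqrt H$ are positive and the square roots in \eqref{t8} and \eqref{t12} are the positive ones; the identity of the two analytic expressions then propagates to all of $\Omega$ by analytic continuation. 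Along the way one also has to observe that the apparent poles of the right-hand sides of \eqref{t8} and \eqref{t12} at $z=x_{j,n}$ (coming from the pole of $\phi(\cdot;x_{j,n})$) are removable — as they must be, $P_n$ and $\mathcal Q_n$ being analytic on $\Omega$ — and that after multiplication these factors cancel entirely, so that no spurious singularity survives and $\hat g_{(n)}/\sqrt H$ has only the simple zeros at the $x_{j,n}$.

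It remains to pass from the multiplicative equivalence "$\sim$" to the additive form stated in \eqref{t13}. Reading \eqref{t8} and \eqref{t12} as $P_n/\phi^n=R^{(1)}_n(1+o(1))$ and $\phi^{n+1}\mathcal Q_n=R^{(2)}_n(1+o(1))$ uniformly on compacts of $\Omega$, the above computation gives $P_n\mathcal Q_n=\hat g_{(n)}(z)/\sqrt{H(z)}+\bigl(\hat g_{(n)}(z)/\sqrt{H(z)}\bigr)\,o(1)$; since the zeros $x_{j,n}$ all lie in the fixed compact gaps $[a_{2j},a_{2j+1}]$ while $\sqrt H$ is bounded away from $0$ on compacts of $\Omega$, the sequence $\hat g_{(n)}/\sqrt H$ is uniformly bounded on each such compact, whence the error is $o(1)$. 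Alternatively, one can argue via the normal-family observation recorded just before the statement: both $(P_n\mathcal Q_n)$ and $(\hat g_{(n)}/\sqrt H)$ are normal on compacts of $\Omega$, so along any subsequence one extracts a further subsequence on which both converge locally uniformly; the asymptotic equivalence forces the two limits to coincide, hence the difference tends to $0$ along it, and since the subsequence was arbitrary, along the whole sequence — which is \eqref{t13}.
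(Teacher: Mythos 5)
Your proposal is correct and follows exactly the paper's (essentially one-line) argument: multiply the asymptotic representations \eqref{t8} and \eqref{t12}, cancel the common factors, and use $\phi'/\phi = r_\infty/\sqrt{H}$ from \eqref{t2} to arrive at $\hat g_{(n)}/\sqrt H$. The additional care you take with the branch bookkeeping and with upgrading the multiplicative equivalence to the additive $o(1)$ form (via uniform boundedness or the normal-family remark preceding the corollary) only makes explicit what the paper leaves implicit.
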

For weight functions of the form $v(x) \rho(x) dx$ plus a possible finite number of
mass points outside $[a_1,a_{2l}],$ where $v$ is positive and analytic on $E,$ the
limit relation \eqref{t13} has been derived recently in \cite{Sue1} independently from
the above asymptotics \eqref{t8} and \eqref{t12}.

To derive the asymptotic representations of $G(z, n + 1, n)$ and $G(z, n, n+1)$
we will make use of Abel's Theorem and the solvability and uniqueness of
the real Jacobi inversion problem.
For this reason we have to write Widom's condition
\eqref{t9} in terms of Abelian integrals, which we will do
similarly as in \cite{Apt,Sue1}, see also \cite{AchTom}.

Let $\mathfrak R$ denote the hyperelliptic Riemann surface of genus $l-1$ defined
by $y^2 = H$ with branch cuts $[a_1,a_2], [a_2,a_3],\ldots,[a_{2l-1},a_{2l}].$
Points on $\mathfrak R$ are written in the form $\mathfrak z$ and $z$ denotes the
projection of $\mathfrak z$ on $\mathbb C$ also written ${\rm pr}({\mathfrak z}) = z.$
Furthermore we also use the notation ${\rm pr}({\mathfrak x}) = x$ and ${\rm pr}({\mathfrak y})
= y.$ $\mathfrak z$ and ${\mathfrak z}^{*}$ denote the points which lie above each
other on $\mathfrak R,$ i.e. ${\rm pr}({\mathfrak z}) = {\rm pr}({\mathfrak z}^*).$
The two sheets of $\mathfrak R$ are denoted by $\mathfrak R^+$ and $\mathfrak R^-.$
To indicate that ${\mathfrak z}$ lies on the first respectively second sheet we write
${\mathfrak z}^+$ and ${\mathfrak z}^-$. On ${\mathfrak R}^+$ the branch of $\sqrt{H}$
is chosen for which $\sqrt{H({\mathfrak x}^{+})} > 0$ for ${\mathfrak x}^+ > a_{2l}$.

Furthermore (see e.g. \cite{Osg,Spr}) let the cycles  $\{\alpha _j,\beta_j\}_{j=1}^{l-1}$
be the usual canonical homology basis  on $\mathfrak R$, i.e., the curve $\alpha _j$ lies
on the upper sheet $\mathfrak R^+$ of $\mathfrak R$ and encircles there
clockwise the interval $E_j$ and the curve $\beta _j$ originates at $a_{2j}$
arrives at $a_{2l-1}$ along the upper sheet and turns back to $a_{2j}$ along
the lower sheet. Let $\{ \varphi _1,\ldots,\varphi_{l-1}\}$, where $\varphi _j = \sum^{l-1}_{s=1}
e_{j,s} \frac{{\mathfrak z}^s}{\sqrt{H(\mathfrak z)}} d {\mathfrak z}, e_{j,s}\in\C$,
be a base of the normalized Abelian differential of the first kind, i.e.,
\begin{equation}
 	\int_{\alpha _j}\varphi _k = 2\pi i\delta _{jk} \quad \text{and}\quad
 	\int_{\beta_j}\varphi _k = B_{jk} \quad \text{for}\; j,k = 1,\ldots,l-1
 	\label{eq-38}
\end{equation}
where $\delta_{jk}$ denotes the Kronecker symbol here. The integrals in \eqref{eq-38}
are the so-called periods. Note that the $e_{j,n}$'s
are real since $\sqrt{H}$ is purely imaginary on $E_j$ and since
$\sqrt{H}$ is real on $\R \setminus E$ the symmetric matrix of periods $(B_{j,k})$ is real also.

Abelian differentials of third kind are differentials with simple poles at given points
$\mathfrak x$ and $\mathfrak y$ on $\mathfrak R$ with residues $+1$ and $-1,$ respectively,
and normalized such that integrals along the $\alpha_{\kappa}-$cycles vanish for
$\kappa = 1,...,l-1.$

Representing the differential $d \log \phi(z,c),$ $c \in \mathbb R  \backslash E,$
as linear combinations of normalized Abelian differentials of first and third kind (recall that
$\phi(\cdot, {\mathfrak c}^{+})$, where $c = {\mathfrak c}^{+},$ has a pole at ${\mathfrak c}^{+}$
and a zero at ${\mathfrak c}^{-}$ since the analytic extension of the complex Green's function
to the second sheet is given by $\phi({\mathfrak z}^{-}, {\mathfrak c}^{+}) =
1/\phi({\mathfrak z}^{+}, {\mathfrak c}^{+})$) one obtains by integrating along the $\alpha_j$
and $\beta_j$ cycles that
\begin{equation}\label{ep6t1}
    \int_{{\mathfrak c}^{-}}^{{\mathfrak c}^{+}} \varphi _j = \sum_{\kappa =1}^{l-1}
    \omega _\kappa(c)B_{j\kappa}.
\end{equation}
Similarly (see \cite{Pehmin} for details), since the analytic
extension of the harmonic measure
\begin{equation}\label{ep6x}
    w_k = \omega_k + i \omega^{*}_k {\rm \ \ \ on \ \ \ } {\mathfrak R}^{+} {\rm \ \ \ and \ \ \ }
    w_k = - \omega_k + i \omega^{*}_k {\rm \ \ \ on \ \ \ } {\mathfrak R}^{-}
\end{equation}
is just another basis of Abelian differentials of first kind, $\varphi_j$ can be represented
as linear combination of the $w_k$'s. Integrating along the $\beta_\kappa$ cycle,
$\kappa \in \{ 1,...,l-1 \},$ and recalling the fact that the integral along a $\beta_\kappa$-cycle
is the difference of values along the $\alpha_\kappa$
cycle, which is $E_\kappa,$ we obtain by \eqref{ep6x} that
$$ \int \varphi_j  = - \frac{1}{2} \sum_{\kappa = 1}^{l - 1} w_\kappa B_{j \kappa}.$$
Hence, using the fact that $\omega_k(z) = 1$ on $E_\kappa,$ $\kappa \in \{ 1,...,l \},$
and thus by the Cauchy-Riemann equations
$d w_\kappa(z) = i \frac{\partial \omega_\kappa}{\partial n} d s,$
we get
\begin{equation}\label{ep6t2}
   \frac{2}{\pi i} \varphi^{+}_{j} = \frac{1}{\pi} \sum_{\kappa = 1}^{l-1}
   \left( \frac{\partial \omega_\kappa}{\partial n^{+}} ds \right) B_{j \kappa}
\end{equation}

\begin{notation}
Let $[a_{2j},a_{2j+1}]^{\pm}$ denote the two copies of $[a_{2j},a_{2j+1}],$
$j = 1, ..., l-1$ in ${\mathfrak R}^{\pm}.$ Note that $[a_{2j},a_{2j+1}]^+ \cup
[a_{2j},a_{2j+1}]^-$ is a closed loop on ${\mathfrak R}$ and thus
$\sf{X}_{j=1}^{l-1}$ $( [a_{2j}, a_{2j+1}]^+ \cup$ $[a_{2j},a_{2j+1}]^-)$
is topologically a $l-1$ dimensional torus.
\end{notation}

By \eqref{ep6t1} and \eqref{ep6t2} Widom's condition \eqref{t9} becomes,
\begin{equation}\label{nw}
\begin{split}
    \frac{1}{2} \sum\limits_{j = 1}^{l - 1} \int_{{\mathfrak x}_{j,n}^{*}}^
    {{\mathfrak x}_{j,n}} \varphi_k =
    & - (n - \frac{l-1}{2}) \int_{\infty^{-}}^{\infty^{+}} \varphi_k
      - \frac{i}{\pi} \int_E \varphi_k^{+} \log w \\
    & + \sum\limits_{j=1}^{m} \int_{{\mathfrak d}_j^{-}}^{{\mathfrak d}_j^{+}}
      \varphi_k {\rm \ \ \ \ \ mod \ periods}
\end{split}
\end{equation}
where ${\rm pr} ({\mathfrak x}_{j,n}) = x_{j,n}$ and ${\mathfrak x}_{j,n}
\in {\mathfrak R}^{\delta_{j,n}} := {\mathfrak R}^{\pm}$ for $\delta_{j,n} = \pm 1;$
recall that ${\mathfrak x}_{j,n}$ and ${\mathfrak x}_{j,n}^*$ lie above each other.
Furthermore ${\rm pr} ({\mathfrak d}_{j}^{\pm}) = d_{j}.$

Next we note that \eqref{nw} can be considered as so-called Jacobi inversion problem,
that is, for given $(\eta_1,...,\eta_{l-1}) \in {\rm Jac \ } {\mathfrak R},$ where
${\rm Jac \ } {\mathfrak R}$ denotes the Jacobi variety of $\mathfrak R,$ (that is the
quotient space ${\mathbb C}^{l-1} \backslash (2 \pi i \overrightarrow{n} + B
\overrightarrow{m})$, $B = (B_{j k})$ the matrix of periods, $\overrightarrow{n},
\overrightarrow{m} \in {\mathbb Z}^{l-1}$) find ${\mathfrak z}_1,...,
{\mathfrak z}_{l-1} \in \mathfrak R$ such that $$ \sum_{j = 1}^{l - 1}
\int_{{\mathfrak e}_j}^{{\mathfrak z}_j} \varphi_k  = \eta_k
{\rm \ \ \ mod \ periods }, $$ where ${\mathfrak e}_1,...,{\mathfrak e}_{l-1}$ are
given points on $\mathfrak R.$ In this paper the $\eta_j$'s are real always, that is,
we deal with the real Jacobi inversion problem. Denoting by ${\rm Jac \ } {\mathfrak R}
/ {\mathbb R} := {\mathbb R}^{l-1}/ B \overrightarrow{m}$ the Jacobi variety restricted
to reals, the following important uniqueness property of the real Jacobi inversion problem
holds (see e.g. \cite{Kre-Lev-Nud,PehIMNR}): The restricted Abel map
\begin{equation}\label{ct}
\begin{split}
   & {\mathcal A} : {\sf{X}}_{j=1}^{l-1} ([a_{2j},a_{2j+1}]^{+} \cup [a_{2j},a_{2j+1}]^{-})
     \to {\rm Jac \ } {\mathfrak R}/{\mathbb R} \\
   & \qquad \qquad \qquad \qquad \qquad \ ({\mathfrak z}_1,...,{\mathfrak z}_{l-1}) \mapsto
     \frac{1}{2} \left(
     \sum_{j=1}^{l-1} \int_{{\mathfrak z}_j^{*}}^{{\mathfrak z}_j} \varphi_1,...,
     \sum_{j=1}^{l-1} \int_{{\mathfrak z}_j^{*}}^{{\mathfrak z}_j} \varphi_{l-1} \right)
\end{split}
\end{equation}
is a holomorphic bijection.

Now let us show that the solutions ${\mathfrak x}_{1,n},...,{\mathfrak x}_{l-1,n}$ of \eqref{nw}
can be described uniquely with the help of two polynomials which
is crucial in what follows.

\begin{prop}\label{prop1}
a) Let $g_{(n)},$ $n \in \mathbb N,$ be given by \eqref{t10}.
There exists a monic polynomial $\hat{f}_{(n+1)}$ of degree $l$ such that
\begin{equation}\label{P1}
    \hat{f}_{(n+1)}^2(x) - H(x) = L_n \hat{g}_{(n+1)}(x) \hat{g}_{(n)}(x)
\end{equation}
with
\begin{equation}\label{P2}
    \hat{f}_{(n+1)}(x_{j,n}) = - \delta_{j,n} \sqrt{H(x_{j,n})} {\rm \ and \ }
    \hat{f}_{(n+1)}(x_{j,n+1}) = - \delta_{j,n+1} \sqrt{H(x_{j,n+1})}
\end{equation}
where $x_{j,n}, x_{j,n+1}, \delta_{j,n}, \delta_{j,n+1}$ are given by \eqref{t9} and
$L_n \in \mathbb R$ is given below. Moreover $\hat{f}_{(n+1)}$ can be represented in
the form
\begin{equation}\label{P3}
   \hat{f}_{(n+1)}(x) = \left( x + \sum\limits_{j = 1}^{l - 1} x_{j,n} - c_1
                        \right) \hat{g}_{(n)}(x) -
   \sum\limits_{j = 1}^{l - 1} \frac{\delta_{j,n} \sqrt{H(x_{j,n})} \hat{g}_{(n)}(x)}
   {\hat{g}^{'}_{(n)}(x_{j,n})(x - x_{j,n})},
\end{equation}
where $c_1$ is given by $H(x) = x^{2 l} - 2 c_1 x^{2 l - 1} + ... .$

Furthermore
\begin{equation}\label{P4}
    \frac{(\hat{f}_{(n+1)} \pm  \sqrt{H})^2(z)}{L_n \hat{g}_{(n)}(z) \hat{g}_{(n+1)}(z)} =
    \left( \phi^2(z;\infty) \prod\limits_{j=1}^{l-1}
    \frac{\phi(z;x_{j,n+1})^{\delta_{j,n+1}}}{\phi(z;x_{j,n})^{\delta_{j,n}}}
    \right)^{\pm 1}
\end{equation}
and
\begin{equation}\label{P5}
\begin{split}
    L_n & = 4 ({\rm cap} (E))^2 \prod\limits_{j=1}^{l-1}
            \frac{\phi(x_{j,n};\infty)^{\delta_{j,n}}}
            {\phi(x_{j,n+1};\infty)^{\delta_{j,n+1}}} = 4 \lambda_{n+2} (1+o(1))
\end{split}
\end{equation}

b) To each solution $({\mathfrak x}_{1,n},...,{\mathfrak x}_{l-1,n})$ of \eqref{nw}
there corresponds a unique pair of polynomials $(\hat{g}_{(n)},\hat{f}_{(1+n)})$
given by \eqref{t10} and by \eqref{P3} with $\sqrt{H({\mathfrak x}_{j,n})} =
\delta_{j,n} \sqrt{H(x_{j,n})}.$
\end{prop}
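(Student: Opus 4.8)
The plan is to take formula \eqref{P3} as the \emph{construction} of $\hat f_{(n+1)}$ and to read off \eqref{P1}, \eqref{P2}, \eqref{P4}, \eqref{P5} from it. First one checks directly that the right-hand side of \eqref{P3} is a monic polynomial of degree $l$: the term $(x+\sum_j x_{j,n}-c_1)\hat g_{(n)}(x)$ contributes $x^{l}-c_1x^{l-1}+\cdots$ (since $\hat g_{(n)}$ is monic of degree $l-1$ with $x^{l-2}$-coefficient $-\sum_j x_{j,n}$), while each $\hat g_{(n)}(x)/(x-x_{j,n})$ has degree $l-2$; in particular the $x^{l-1}$-coefficient of $\hat f_{(n+1)}$ equals $-c_1$, the same as that of $\sqrt H$. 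Evaluating \eqref{P3} at $x=x_{j,n}$ (taking the limit in the singular summand) yields the first identity of \eqref{P2}. Since the $x_{j,n}$ lie in distinct gaps they are distinct, hence $\hat f_{(n+1)}^2-H$ vanishes at each of them, so $\hat g_{(n)}\mid\hat f_{(n+1)}^2-H$, and since the two leading coefficients of $\hat f_{(n+1)}^2$ and $H$ agree one has $\deg(\hat f_{(n+1)}^2-H)\le 2l-2$. Write $\hat f_{(n+1)}^2-H=\hat g_{(n)}\,\tilde g$ with $\deg\tilde g\le l-1$; it remains to identify $\tilde g$ together with its leading coefficient $L_n$, and to establish the second half of \eqref{P2} and the identities \eqref{P4}, \eqref{P5}.

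The heart of the argument is the identification of $\tilde g$, which I would carry out on $\mathfrak R$ via Abel's theorem and the uniqueness of the real Jacobi inversion problem. The function $\mathfrak z\mapsto\hat f_{(n+1)}(z)+\sqrt H$ is meromorphic on $\mathfrak R$ with poles only over $z=\infty$, of orders $l$ at $\infty^{+}$ and $l-2$ at $\infty^{-}$ (because the top two coefficients of $\hat f_{(n+1)}$ and $\sqrt H$ coincide); by the first half of \eqref{P2} its zeros over the $x_{j,n}$ are exactly the points ${\mathfrak x}_{j,n}$ (on the sheet dictated by the convention $\sqrt{H({\mathfrak x}_{j,n})}=\delta_{j,n}\sqrt{H(x_{j,n})}$), and its remaining zeros lie over the zeros of $\tilde g$, on the sheet determined by $\hat f_{(n+1)}$. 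Applying Abel's theorem to this divisor, using that $({\mathfrak x}_{1,n},\dots,{\mathfrak x}_{l-1,n})$ solves the Jacobi inversion problem \eqref{nw} at index $n$, and noting that the right-hand side of \eqref{nw} at index $n+1$ differs from that at index $n$ exactly by the term $-\int_{\infty^-}^{\infty^+}\varphi_k$, one obtains that the remaining zeros of $\hat f_{(n+1)}+\sqrt H$ satisfy \eqref{nw} at index $n+1$. Since the restricted Abel map \eqref{ct} is a bijection onto ${\rm Jac}\,\mathfrak R/\R$, this forces $\deg\tilde g=l-1$, its zeros to lie one in each gap $[a_{2j},a_{2j+1}]$ with sign prescription matching $\delta_{j,n+1}$ of \eqref{t9}, and the corresponding surface points to be $({\mathfrak x}_{j,n+1})$. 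Thus $\tilde g=L_n\hat g_{(n+1)}$ with $\hat g_{(n+1)}$ as in \eqref{t10}, which is \eqref{P1}, and we obtain the second half of \eqref{P2}. I expect this to be the main obstacle: it requires scrupulous bookkeeping of the orientations of the cycles $\alpha_j,\beta_j$, the base point, the $2\pi i$- and $B$-period lattices, and the sheet/sign conventions built into \eqref{t9}–\eqref{nw} and into the symbol $\phi(z;x_{j,n})^{\delta_{j,n}}$; degenerate configurations (an $x_{j,n}$ at a band edge, or $x_{j,n}=x_{j,n+1}$) have to be cleared by a continuity argument.

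For \eqref{P4} note that, by \eqref{P1}, $\big(\hat f_{(n+1)}\pm\sqrt H\big)^2/\big(L_n\hat g_{(n)}\hat g_{(n+1)}\big)=\big(\hat f_{(n+1)}\pm\sqrt H\big)/\big(\hat f_{(n+1)}\mp\sqrt H\big)$, which is a single-valued function on $\mathfrak R$; one compares its divisor (zeros and poles of $\hat f_{(n+1)}\pm\sqrt H$ over the $x_{j,n},x_{j,n+1}$ and over $\infty^{\pm}$, as above) with that of the right-hand side of \eqref{P4}, using that $\phi(z;\infty)$ has divisor $\infty^{-}-\infty^{+}$ and that each $\phi(z;x_{j,\cdot})^{\delta_{j,\cdot}}$ contributes one zero and one pole over $x_{j,\cdot}$ on prescribed sheets; the two functions then differ by a multiplicative constant, pinned down to $1$ by comparing the leading behaviour as $z\to\infty$, and the reciprocal identity is the $\mp$ case. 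Letting $z\to\infty$ in \eqref{P4} gives the first equality of \eqref{P5}: the left side is $\sim 4z^{2}/L_n$ while, by \eqref{C} and the symmetry $g(\infty,x_{j,n})=g(x_{j,n},\infty)$ of the Green's function, the right side is $\sim\big(z/{\rm cap}(E)\big)^{2}\prod_j\phi(x_{j,n+1};\infty)^{\delta_{j,n+1}}/\phi(x_{j,n};\infty)^{\delta_{j,n}}$. For the asymptotics $L_n=4\lambda_{n+2}(1+o(1))$ I would combine \eqref{t8} and \eqref{t12} at indices $n$ and $n+1$ (using $\phi'/\phi=r_\infty/\sqrt H$) to obtain the companion of Corollary \ref{cor1}, namely $(P_{n+1}\mathcal Q_n)(z)\sim\big(\hat f_{(n+1)}(z)+\sqrt H\big)/\big(\sqrt{L_n}\,\sqrt H\big)$ uniformly near $\infty$, and evaluate at $z=\infty$, where by \eqref{G3} the left side tends to $1/\sqrt{\lambda_{n+2}}$ and the right side to $2/\sqrt{L_n}$.

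Finally, part b) is a repackaging of this construction. A solution $({\mathfrak x}_{1,n},\dots,{\mathfrak x}_{l-1,n})$ of \eqref{nw} determines $x_{j,n}={\rm pr}({\mathfrak x}_{j,n})$ and the signs $\delta_{j,n}$ (the sheets), hence $\hat g_{(n)}=\prod_j(x-x_{j,n})$ and, via \eqref{P3} with $\sqrt{H({\mathfrak x}_{j,n})}=\delta_{j,n}\sqrt{H(x_{j,n})}$, the polynomial $\hat f_{(1+n)}$; part a) shows this pair satisfies \eqref{P1}–\eqref{P2}. Conversely, a pair satisfying \eqref{P1}–\eqref{P3} recovers the $x_{j,n}$ as the zeros of $\hat g_{(n)}$ and the $\delta_{j,n}$ from $\hat f_{(1+n)}(x_{j,n})=-\delta_{j,n}\sqrt{H(x_{j,n})}$, and the argument of part a) shows these solve \eqref{nw}; the correspondence is injective because $\hat g_{(n)}$ is determined by its zeros and $\hat f_{(1+n)}$ by \eqref{P3}, while the uniqueness of the solution of \eqref{nw} itself is exactly the bijectivity of \eqref{ct}.
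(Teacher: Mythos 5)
Your proposal uses the same two external inputs as the paper --- Abel's theorem applied to the difference of \eqref{nw} at indices $n$ and $n+1$, and the uniqueness of the real Jacobi inversion problem via \eqref{ct} --- but it runs the construction in the opposite direction. The paper first produces, from Abel's theorem, a rational function $\mathcal R_n$ on $\mathfrak R$ with double pole/zero at $\infty^{\pm}$ and simple zeros/poles at the $\mathfrak x_{j,n}$, $\mathfrak x_{j,n+1}$ and their involutes; writing $\mathcal R_n=(\hat f_{(n+1)}+\sqrt H)^2/\bigl(L_n\hat g_{(n)}\hat g_{(n+1)}\bigr)$ and multiplying by the involuted identity $\mathcal R_n(\mathfrak z^*)=1/\mathcal R_n(\mathfrak z)$ gives \eqref{P1}--\eqref{P2} at once, and \eqref{P3} is only read off afterwards as a Lagrange interpolation plus a comparison of the two top coefficients in \eqref{P1}. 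You instead take \eqref{P3} as the definition of $\hat f_{(n+1)}$ and must then identify the cofactor $\tilde g$ in $\hat f_{(n+1)}^2-H=\hat g_{(n)}\tilde g$ with $L_n\hat g_{(n+1)}$. That is a legitimate alternative (and it makes part b) essentially tautological), but it creates one obligation your sketch does not fully discharge: bijectivity of the \emph{restricted} Abel map \eqref{ct} says only that among divisors supported on the gap torus there is exactly one with the prescribed image; it does not by itself exclude that the remaining zeros of $\hat f_{(n+1)}+\sqrt H$ form some other real divisor (complex-conjugate pairs, or points over the bands) with the same image. You need either the observation that a conjugate pair $\mathfrak y^{+}+\mathfrak y^{-}$ among these zeros would force $H(y)=0$, so that away from branch points the divisor is non-special and the degree-$(l-1)$ inversion problem has a unique effective solution, or the elementary sign argument that $\hat f_{(n+1)}^2-H\ge -H\ge 0$ on $E$ forces $\tilde g$ to carry the same sign pattern as $L_n\hat g_{(n)}$ on the bands and hence to have exactly one zero in each gap. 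The paper's direction avoids this entirely because it starts from the known gap points $\mathfrak x_{j,n+1}$.

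Two further remarks on the later steps. For \eqref{P4} you prove equality of divisors and then say the multiplicative constant is ``pinned down to $1$ by comparing the leading behaviour at $\infty$''; as stated this is circular, because the leading coefficient of the left side is $4/L_n$ and that of the right side is exactly the quantity appearing in the first half of \eqref{P5}, so the comparison only expresses the unknown constant in terms of $L_n$. You need an independent normalization --- the paper's maximum-principle argument ($|F^{\pm}|=1$ on $E$), or the observation that the product of the $+$ and $-$ versions of each side is identically $1$, forcing the constant to be $\pm1$, followed by a positivity check on $(a_{2l},\infty)$. Once the constant is known to be $1$, comparing leading coefficients then \emph{yields} the first equality of \eqref{P5}. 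Your derivation of $L_n=4\lambda_{n+2}(1+o(1))$ from the asymptotics of $P_{n+1}\mathcal Q_n$ at $\infty$ via \eqref{G3} is a genuine variant of the paper's leading-coefficient computation and is not circular, since it uses only \eqref{t8}, \eqref{t12} and the already-established \eqref{P4}. With the cofactor-location step and the constant normalization repaired as above, the argument is complete.
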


\begin{proof}
a) Let us write ${\mathfrak x}^{\delta_{j,n}}_{j,n} := {\mathfrak x}_n^{\pm}$ for
$\delta_{j,n} = \pm 1.$ Considering \eqref{nw} for $n$ and $n+1$ and substracting
both equations we obtain that
$$ \sum\limits_{j = 1}^{l - 1} \int_{{\mathfrak x_{j,n+1}^{\delta_{j,n+1}}}}
   ^{{\mathfrak x_{j,n+1}^{- \delta_{j,n+1}}}} \varphi_k
   = 2 \int_{\infty^{-}}^{\infty^{+}} \varphi_k +
   \sum\limits_{j = 1}^{l - 1} \int_{{\mathfrak x_{j,n}^{\delta_{j,n}}}}^
   {{\mathfrak x_{j,n}^
   {- \delta_{j,n}}}} \varphi_k, \ k = 1,...,l-1.
$$
Thus it follows from Abel's Theorem that there is a rational function
${\mathcal R}_n$ on the Riemann surface with the following properties
\begin{equation}\label{x1}
\begin{split}
   & \infty^{{\pm}} {\rm \ is \ a \ double \ pole \ (zero) }\\
   & {\mathfrak x}_{j,n+1}^{\mp \delta_{j,n+1}}
     {\rm \ is \ a \ simple \ zero \ (pole) }\\
   & {\mathfrak x}_{j,n}^{\mp \delta_{j,n+1}}
     {\rm \ is \ a \ simple \ pole \ (zero)}.
\end{split}
\end{equation}
Thus ${\mathcal R}_n$ can be represented in the form
\begin{equation}\label{x2}
   {\mathcal R}_n = \frac{(f_{(n+1)} +  \sqrt{H})^2}{L_n \hat{g}_{(n)} \hat{g}_{(n+1)}},
\end{equation}
where $f_{(n+1)}$ is a polynomial of degree $l$ and $L_n$ is a constant, since
the function at the RHS in \eqref{x2} satisfies \eqref{x1} and has no other zeros
or poles on ${\mathfrak R}$ as can be checked easily. Since the points
${\mathfrak x}_{j,n+1}^{\pm \delta_{j,n+1}}$ and ${\mathfrak x}_{j,n}^{\pm \delta_{j,n}}$
are real the rational function $\overline{\mathcal R_n({\mathfrak z})}$ has the same
properties \eqref{x1} as $\mathcal R_n(\bar{{\mathfrak z}})$, hence $\mathcal
R_n(\bar{{\mathfrak z}}) = \overline{\mathcal R_n({\mathfrak z})}$ and therefore
$f_{(n+1)}$ has real coefficients and $L_n \in \mathbb R.$ Taking involution, denoted
by ${\mathfrak z}^*,$ we obtain by \eqref{x1}
\begin{equation}\label{x3}
    \frac{1}{{\mathcal R}_n(\mathfrak z)} = {\mathcal R}_n({\mathfrak z}^*) =
    \frac{(f_{(n+1)} - \sqrt{H})^2}{L_n \hat{g}_{(n)} \hat{g}_{(n+1)}},
\end{equation}
and thus, by multiplying \eqref{x3} and \eqref{x2}, relation \eqref{P1}-\eqref{P2}.
Concerning \eqref{P2} note that $\sqrt{H({\mathfrak x}^{\delta})} = \delta \sqrt{H(x)}.$

Next let us note
$$ - (\sqrt{H})^{-}(x) =  (\sqrt{H})^{+}(x) =
i (-1)^{l-k} \sqrt{|H(x)|} {\rm \ for \ } x \in E_k$$
and therefore by \eqref{x2} and \eqref{P1}
$$ |{\mathcal R}^{\pm}(x)| = 1 {\rm \ for \ } x \in E.$$

Thus the function
$$ F(z) := \frac{{\mathcal R}_n(z)}{\phi^2(z;\infty)}
   \prod\limits_{j=1}^{l-1} \frac{\phi(z;x_{j,n})^{\delta_{j,n}}}
   {\phi(z;x_{j,n+1})^{\delta_{j,n+1}}} $$
has neither zeros nor poles on $\Omega$ and satisfies $|F^{\pm}| = 1$ on $E.$
Hence $\log |f(z)|$ is a harmonic bounded function on $\Omega$ which has a
continuous extension to $E$ and thus $F = 1,$ which proves \eqref{P4}.

Considering \eqref{P4} at $z = \infty$ the first relation in $\eqref{P5}$ follows.
Next denote by $lc(P_n)$ the leading coefficient of $P_n.$ Obviously
$  \lambda_{n + 2} = \int p^2_{n+1} / \int p_n^2 = \left(  lc(P_n) /
lc(P_{n+1}) \right)^2.$ Using the fact that $lc(P_n) = \lim\limits_{z \to \infty} P_n(z) / z^n$ it follows
by \eqref{t8}, in conjunction with \eqref{C}, that the last equality in \eqref{P5} holds.

Relation \eqref{P3} follows by \eqref{P2}, note that the second term at the
RHS of \eqref{P3} is the unique Lagrange interpolation polynomial which takes
on at $x_{j,n}$ the value $- \delta_{j,n} \sqrt{H(x_{j,n})},$ and by
equating the first two leading coefficients in \eqref{P1}.

b) follows immediately by a).
\end{proof}

We note in passing that \eqref{P5}, which may be derived from \eqref{t8} also,
is the so-called trace formula for $\lambda_{n+2}$ and that the other trace formula
$- \alpha_{n + 1} = \sum_{j = 1}^{l-1} x_{j,n} - c_1 + o(1)$ follows immediately by
\eqref{G1}, \eqref{t13} and by equating coefficients of $x^{l-1}$ in \eqref{P3}. By
\eqref{nw} and \eqref{ct} the $x_{j,n}$'s may be expressed with the help of the Abel
map from which representation \eqref{F1} can be obtained.

For measures from $\mathcal{G}$, $\mathcal{G}$ defined in Corollary \ref{cor4.4},
which are so-called reflectionless measures (note that the set of Jacobi matrices
associated with the set of measures $\mathcal{G}$ is the so called isospectral torus),
Proposition \ref{prop1} is essentially known \cite{Mag, TurDuc, PehSIAM, Tes} apart from the
important relation \eqref{P4}. But the way of derivation is exactly the opposite
one. First one shows that certain polynomials, which are expressions in Pad\'{e}
approximants, satisfy \eqref{P1} - \eqref{P3} - to find the corresponding polynomials
in the general case seems to be hopeless, if they exist at all - and then one derives
the correspondence with the solutions of \eqref{nw}.

\begin{thm}\label{under}
Let $\mu$ be given by \eqref{7_2} with $w \in {\rm Sz}(E).$ Then uniformly on compact
subsets of $\Omega$ there holds
\begin{equation}\label{a1}
   2 \sqrt{\lambda_{n+2}} P_n {\mathcal Q}_{n+1} = \frac{\hat{f}_{(n+1)} -
   \sqrt{H}}{\sqrt{H}}+o(1)
\end{equation}
\begin{equation}\label{a2}
   2 \sqrt{\lambda_{n+2}} P_{n+1} {\mathcal Q}_{n} = \frac{\hat{f}_{(n+1)} +
   \sqrt{H}}{\sqrt{H}}+o(1)
\end{equation}
and on compact subsets of $\C \setminus [a_1,a_{2l}]$
\begin{equation}\label{a4}
  2\sqrt{\lambda_{n+2}} \frac{P_{n+1}}{P_{n}} = \frac{\hat{f}_{(n+1)} +
   \sqrt{H}}{\hat{g}_{(n)}} + o(1)
\end{equation}

\begin{equation}\label{a3}
   \int \frac{1}{z - x} d \mu^{(n+1)}(x) = \sqrt{\lambda_{n+2}} \frac{{\mathcal Q}_{n+1}(z)}{{\mathcal Q}_{n}(z)} =
   \frac{\hat{f}_{(n+1)}(z) - \sqrt{H}(z)}{2\hat{g}_{(n)}(z)} + o(1),
\end{equation}
where $\mu^{(m+1)}$ denotes the measure associated with the $m+1$ forwards shifted recurrence
coefficients $(\alpha_{n+m+1})_{n \in \mathbb N}$ and $(\lambda_{n+m+2})_{n \in \mathbb N}$.
\end{thm}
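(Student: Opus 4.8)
The plan is to derive \eqref{a1}--\eqref{a2} directly from the known asymptotics \eqref{t8}, \eqref{t12}, and \eqref{G2}--\eqref{G3} together with the algebraic identities of Proposition \ref{prop1}. First I would multiply the Widom asymptotics for $P_n$ in \eqref{t8} and for ${\mathcal Q}_{n+1}$ in \eqref{t12} (with $n$ replaced by $n+1$ in the latter), observing that the factors $U{\mathcal W}_0(z)$, $\prod_j\phi(z;d_j)$, and $\prod_j\phi(z;c_j)$ cancel. Using $\phi(z;\infty)^{-1}$ coming from the difference of exponents $\phi^n/\phi^{n+2}$, and the relation \eqref{t2} expressing $\phi'$ and $r_\infty$, the product $P_n{\mathcal Q}_{n+1}$ should reduce, up to $o(1)$, to an expression built from $\hat g_{(n)}$, $\hat g_{(n+1)}$, $H$, and the correcting $\phi(z;x_{j,n})^{\delta_{j,n}}$ and $\phi(z;x_{j,n+1})^{\delta_{j,n+1}}$ factors. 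The point is that this expression is exactly one of the two square roots appearing in \eqref{P4}: by \eqref{P4},
\begin{equation*}
\left(\frac{(\hat f_{(n+1)}-\sqrt H)^2}{L_n\hat g_{(n)}\hat g_{(n+1)}}\right)^{1/2}
=\phi(z;\infty)^{-1}\prod_{j=1}^{l-1}\frac{\phi(z;x_{j,n})^{\delta_{j,n}}}{\phi(z;x_{j,n+1})^{\delta_{j,n+1}}},
\end{equation*}
and combined with $L_n=4\lambda_{n+2}(1+o(1))$ from \eqref{P5} this should turn the right side into $(\hat f_{(n+1)}-\sqrt H)/(2\sqrt{\lambda_{n+2}}\sqrt H)$, which is \eqref{a1}. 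The identity \eqref{a2} follows the same way from $P_{n+1}{\mathcal Q}_n$, now using the $+\sqrt H$ branch in \eqref{P4}; note that \eqref{G3} versus \eqref{G2} differ by the additive $1/\sqrt{\lambda_{n+2}}$, which is absorbed since $(P_{n+1}{\mathcal Q}_n-P_n{\mathcal Q}_{n+1})\sqrt{\lambda_{n+2}}=1$ corresponds exactly to the difference $(\hat f_{(n+1)}+\sqrt H)-(\hat f_{(n+1)}-\sqrt H)=2\sqrt H$ divided by $\sqrt H$.

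For \eqref{a4} I would divide \eqref{t8} at level $n+1$ by \eqref{t8} at level $n$; all the $\mu$-dependent Szeg\H o factors and the $\prod_j\phi(z;c_j)$ cancel, leaving $\phi(z;\infty)$ times a ratio of the $\hat g$-products and the $\phi(z;x_{j,\cdot})^{\delta}$ corrections. Since we are on compact subsets of $\C\setminus[a_1,a_{2l}]$, all these $\phi$'s are single-valued and nonvanishing, and the combination is again governed by \eqref{P4}, now read as $(\hat f_{(n+1)}+\sqrt H)/\sqrt{L_n\,\hat g_{(n)}\hat g_{(n+1)}}\cdot(\hat g_{(n+1)}/\hat g_{(n)})^{?}$; more cleanly, one uses $(\hat f_{(n+1)}+\sqrt H)^2=L_n\hat g_{(n)}\hat g_{(n+1)}\cdot{\mathcal R}_n$ with ${\mathcal R}_n$ as in \eqref{x2} to identify $2\sqrt{\lambda_{n+2}}P_{n+1}/P_n$ with $(\hat f_{(n+1)}+\sqrt H)/\hat g_{(n)}+o(1)$. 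Finally \eqref{a3}: the ratio $\sqrt{\lambda_{n+2}}\,{\mathcal Q}_{n+1}/{\mathcal Q}_n$ is obtained by dividing \eqref{t12} at $n+1$ by \eqref{t12} at $n$ (the mirror computation to \eqref{a4}), yielding $\phi(z;\infty)^{-1}$ times the reciprocal $\hat g$/$\phi$-combination, which by \eqref{P4} equals $(\hat f_{(n+1)}-\sqrt H)/(2\hat g_{(n)})+o(1)$; the leftmost equality in \eqref{a3} is just the standard fact that the Weyl function of the once-shifted Jacobi matrix is $\sqrt{\lambda_{n+2}}{\mathcal Q}_{n+1}/{\mathcal Q}_n$, which follows from the recurrence \eqref{62} applied to the ${\mathcal Q}$'s.

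The main obstacle I anticipate is bookkeeping of the multivalued complex Green's functions $\phi(z;x_{j,n})^{\delta_{j,n}}$ and the branch of $\sqrt H$: one must check that the products of square roots in \eqref{t8} and \eqref{t12} are assembled so that the resulting single-valued function on $\Omega$ (resp.\ on $\C\setminus[a_1,a_{2l}]$) is precisely the rational expression claimed, with the correct sign/branch — i.e.\ that one gets $\hat f_{(n+1)}-\sqrt H$ rather than $\hat f_{(n+1)}+\sqrt H$ in \eqref{a1}. This is settled exactly as in the proof of Proposition \ref{prop1}: the candidate quotient has no zeros or poles on $\Omega$ and unimodular boundary values on $E$, hence is a unimodular constant, and evaluating at $z=\infty$ (using $P_n{\mathcal Q}_{n+1}\to 0$, consistent with $\hat f_{(n+1)}-\sqrt H\to 0$ there since $\hat f_{(n+1)}$ is the monic degree-$l$ polynomial approximating $\sqrt H$) fixes the constant to be $1$. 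A secondary point is that all these asymptotic relations must be shown uniform on compacta, which is immediate from the uniformity already asserted for \eqref{t8} and \eqref{t12} on compact subsets of $\Omega$, together with the normal-families remark following \eqref{G3} ensuring no loss of uniformity when passing to products and quotients away from the zeros of $P_n$, i.e.\ away from $[a_1,a_{2l}]$ in the cases \eqref{a4}, \eqref{a3}.
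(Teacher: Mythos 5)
Your proposal is correct and follows essentially the same route as the paper: \eqref{a1} and \eqref{a2} are obtained by multiplying the Widom asymptotics \eqref{t8} and \eqref{t12}, using $\phi'/\phi = r_\infty/\sqrt{H}$, and identifying the result via \eqref{P4} and \eqref{P5}. The only (cosmetic) difference is that for \eqref{a4} and \eqref{a3} the paper simply divides the already-established \eqref{a2}, respectively \eqref{a1}, by the relation $P_n\mathcal{Q}_n = \hat{g}_{(n)}/\sqrt{H}+o(1)$ of Corollary \ref{cor1}, rather than recomputing the ratios of \eqref{t8} and \eqref{t12} from scratch as you do.
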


\begin{proof}

Concerning relation \eqref{a1}. Plugging in the asymptotic values for $P_n$ and
${\mathcal Q}_{n+1}$ from \eqref{t8} and \eqref{t12} and recalling the fact that
$\phi^{'}/\phi = r_\infty / \sqrt{H}$ the asymptotic relation follows immediately
by \eqref{P4} and \eqref{P5}.

Analogously \eqref{a2} is proved. \eqref{a3} and \eqref{a4} follow immediately by
\eqref{a1} and Corollary \ref{cor1}, respectively \eqref{a2} and Corollary \ref{cor1}.
\end{proof}

\begin{cor}\label{corR31}
In a neighborhood of $x = 0$ let
$$ \sqrt{H^{*}(x)} = \sum\limits_{j = 0}^{\infty} h_j x^j $$
where $H^{*}(x) = x^{2l} H(\frac{1}{x})$ is the reciprocal polynomial of $H$.
Then the following limit relations hold for $m \geq l$
\begin{equation}\label{R31x1}
    \lim\limits_n \left( \sum\limits_{j = 0}^{m} h_j \int t^{m-j} P^2_n \right) = 0
\end{equation}
and for $m \geq l+1$
\begin{equation}\label{R31x2}
    \lim\limits_n \left( h_m + 2 \sum\limits_{j = 0}^{m - 1} h_j \sqrt{\lambda_{2 + n}}
    \int t^{m - 1 - j} P_{1 + n} P_n \right) = 0
\end{equation}
\end{cor}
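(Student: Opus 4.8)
The plan is to read off both limit relations from the expansions at $z=\infty$ of the diagonal Green's functions $(P_n{\mathcal Q}_n)(z)$ and $(P_n{\mathcal Q}_{n+1})(z)$, whose Laurent coefficients are precisely the moments occurring in \eqref{R31x1} and \eqref{R31x2}, and then to combine these with the polynomial asymptotics of Corollary \ref{cor1} and Theorem \ref{under}. First I would fix $R>\max\{|a_1|,|a_{2l}|,|d_1|,\dots,|d_m|\}$. For $|z|>R$ one has the convergent expansions
\[
 (P_n{\mathcal Q}_n)(z)=\sum_{k\ge0}z^{-k-1}\int t^k P_n^2\, d\mu,\qquad
 2\sqrt{\lambda_{n+2}}\,(P_n{\mathcal Q}_{n+1})(z)=2\sqrt{\lambda_{n+2}}\sum_{k\ge0}z^{-k-1}\int t^k P_{n+1}P_n\, d\mu,
\]
the $k=0$ coefficient of the first series being $\int P_n^2\, d\mu=1$ and that of the second being $\int P_{n+1}P_n\, d\mu=0$ by orthogonality; moreover on the same region $\sqrt{H(z)}=z^l\sqrt{H^*(1/z)}=\sum_{j\ge0}h_j z^{\,l-j}$. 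Writing $\hat g_{(n)}(z)/\sqrt{H(z)}=\sum_{k\ge0}c_{k,n}z^{-k-1}$ and $(\hat f_{(n+1)}(z)-\sqrt{H(z)})/\sqrt{H(z)}=\sum_{k\ge1}e_{k,n}z^{-k}$ for the Laurent expansions of the two limit functions, I note that the circle $|z|=R$ is a compact subset of $\Omega$ lying outside $\operatorname{supp}\mu$, so by Corollary \ref{cor1} and by \eqref{a1} the differences $(P_n{\mathcal Q}_n)(z)-\hat g_{(n)}(z)/\sqrt{H(z)}$ and $2\sqrt{\lambda_{n+2}}(P_n{\mathcal Q}_{n+1})(z)-(\hat f_{(n+1)}(z)-\sqrt{H(z)})/\sqrt{H(z)}$ converge to $0$ uniformly on $|z|=R$; Cauchy's estimate for Taylor coefficients then yields, for each fixed $k$,
\[
 \int t^k P_n^2\, d\mu - c_{k,n}\longrightarrow 0,\qquad
 2\sqrt{\lambda_{n+2}}\int t^{k-1} P_{n+1}P_n\, d\mu - e_{k,n}\longrightarrow 0\quad(k\ge1),
\]
as $n\to\infty$ (in particular $e_{1,n}\to0$, in accordance with the vanishing zeroth moment).

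The second step is a degree count. Multiplying $\sum_{k\ge0}c_{k,n}z^{-k-1}=\hat g_{(n)}(z)/\sqrt{H(z)}$ by $\sqrt{H(z)}=\sum_j h_j z^{\,l-j}$ gives $\bigl(\sum_j h_j z^{\,l-j}\bigr)\bigl(\sum_k c_{k,n}z^{-k-1}\bigr)=\hat g_{(n)}(z)$, which by \eqref{t10} is a polynomial of degree $l-1$; hence every coefficient of a negative power of $z$ on the left-hand side vanishes, which is exactly $\sum_{j=0}^{m} h_j c_{m-j,n}=0$ for all $m\ge l$. In the same way, putting $e_{0,n}:=1$ (the correct zeroth coefficient, since $\hat f_{(n+1)}$ and $\sqrt H$ are both asymptotic to $z^l$ at $\infty$), the identity $\bigl(\sum_j h_j z^{\,l-j}\bigr)\bigl(\sum_{k\ge0} e_{k,n}z^{-k}\bigr)=\hat f_{(n+1)}(z)$ represents the polynomial $\hat f_{(n+1)}$, which by \eqref{P3} is \emph{monic} of degree $l$; hence again the coefficients of negative powers of $z$ vanish, giving $\sum_{j=0}^{m} h_j e_{m-j,n}=0$ for all $m\ge l+1$, the term with $j=m$ being precisely $h_m e_{0,n}=h_m$. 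It is the degree of $\hat g_{(n)}$ being $l-1$, together with $\hat f_{(n+1)}$ being monic of degree $l$, that fixes the index ranges $m\ge l$ and $m\ge l+1$ and isolates the summand $h_m$.

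Finally I would feed the limit relations of step one into these exact polynomial identities; since each identity involves only finitely many indices, the $o(1)$ errors can be carried through the finite sums. Replacing $c_{m-j,n}$ by $\int t^{m-j} P_n^2\, d\mu + o(1)$ in $\sum_{j=0}^{m} h_j c_{m-j,n}=0$ gives $\lim_n \sum_{j=0}^{m} h_j \int t^{m-j} P_n^2\, d\mu = 0$ for $m\ge l$, that is, \eqref{R31x1}. For \eqref{R31x2}, in $\sum_{j=0}^{m} h_j e_{m-j,n}=0$ the term $j=m$ contributes $h_m$, while for $j=0,\dots,m-1$ step one gives $e_{m-j,n}=2\sqrt{\lambda_{n+2}}\int t^{m-1-j} P_{n+1}P_n\, d\mu + o(1)$; summing, $\lim_n\bigl(h_m + 2\sum_{j=0}^{m-1} h_j\sqrt{\lambda_{n+2}}\int t^{m-1-j} P_{n+1}P_n\, d\mu\bigr) = 0$, which is \eqref{R31x2} for $m\ge l+1$. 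I expect the only delicate point to be the passage, in step one, from uniform convergence of the Green's functions on $\Omega$ to coefficientwise convergence of their expansions at $\infty$; but once the contour $|z|=R$ has been placed outside $\operatorname{supp}\mu$ this is a routine application of Cauchy's formula, and everything after that is bookkeeping with the two polynomial identities of step two.
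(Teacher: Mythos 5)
Your proof is correct and follows essentially the same route as the paper: both arguments rest on Corollary \ref{cor1} and relation \eqref{a1}, expand the Green's functions at infinity (the paper works in the reciprocal variable $x=1/z$, which is only cosmetic), and exploit that $\hat g_{(n)}$ and $\hat f_{(n+1)}$ are polynomials of degree $l-1$ and $l$ so that the coefficients of the negative powers must vanish. The paper compresses this into the two identities $\sqrt{H^*(x)}\bigl(\sum_j(\int t^jP_n^2)x^j\bigr)=\hat g^*_{(n)}(x)+o(1)$ and its analogue for $\hat f^*_{(1+n)}$ and says ``equating coefficients''; your version merely makes explicit the Cauchy-estimate step that converts uniform convergence on a circle into coefficientwise convergence.
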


\begin{proof}
By \eqref{t13} and \eqref{G1} it follows that in a neighborhood of $x = 0$
\begin{equation}\label{x5}
    \sqrt{H^{*}(x)} \left( \sum\limits_{j = 0}^{\infty} (\int t^j P^2_n) x^j \right) =
    \hat{g}^*_{(n)}(x) + o(1)
\end{equation}
Equating coefficients for $m \geq l$ relation \eqref{R31x1} follows.

Concerning the second relation we get by \eqref{a1} that
\begin{equation}\label{x6}
    \sqrt{H^{*}(x)} \left( 1 + 2 \sum\limits_{j = 0}^{\infty} ( \sqrt{\lambda_{2 + n}}
    \int t^j P_{1 + n} P_n) x^{j+1}  \right)  = \hat{f}^*_{(1 + n)}(x) + o(1)
\end{equation}
which proves \eqref{R31x2}.
\end{proof}

Most likely the limit relations \eqref{R31x1} and \eqref{R31x2} hold for
measures $\sigma$ whose essential support is $E$. For the subclass of measures
$\mu \in {\mathcal G}$
it can be shown by a different approach, that \eqref{R31x1} and \eqref{R31x2}
hold for $m=l$ and $m=l+1$, respectively, without limit even, see also \cite{Mag,TurDuc}
where the relations are derived in terms of recurrence coefficients for $l=2,3$.

\section{Accumulation points of moments of the Green's functions}

By series expansion of $1 / \sqrt{H(z)}$ at $z = \infty$
\begin{equation}\label{S10}
   \sum_{j=0}^{\infty} c_j z^{-(l + j)} = \frac{1}{\sqrt{H(z)}} =
   \frac{1}{\pi} \int_E \frac{1}{z - t} \frac{dt}{h(t)}
\end{equation}
for $z \in {\mathbb C} \backslash E,$ where the last equality follows by the
Sochozki-Plemelj formula. In particular
\begin{equation}\label{lemmaEx3}
   \int_E x^j \frac{dx}{h(x)} = 0 {\rm \ for \ } j = 0,...,l-2 {\rm \ and \ }
   c_0 = \int_E x^{l-1} \frac{dx}{h(x)} = 1.
\end{equation}
Hence the following lemma holds.

\begin{lemma}\label{lemmaE}
a) Let $A_1,...,A_{l-1} \in \mathbb R$ be given. There exists an unique polynomial
$P(x) = \sum\limits_{\nu = 0}^{l - 1} p_\nu x^\nu$ with $p_{l-1} = 1$
such that
\begin{equation}\label{lemmaEx1}
    \int_E x^j P(x) \frac{dx}{h(x)} = A_j {\rm \ \ \ for \ \ \ } j = 1,...,l-1.\\
\end{equation}
b) Let $B_1,...,B_{l-1} \in \mathbb R$ be given. There exists an unique polynomial $Q(x)$
of degree $l$ with two fixed leading coefficients such that
\begin{equation}\label{lemmaEx2}
    \int_E x^j Q(x) \frac{dx}{h(x)} = B_j {\rm \ \ \ for \ \ \ } j = 1,...,l-1.\\
\end{equation}
\end{lemma}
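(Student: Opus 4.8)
The plan is to reduce both parts to the invertibility of one and the same $(l-1)\times(l-1)$ matrix, whose nondegeneracy is forced by the vanishing of the low-order moments recorded in \eqref{lemmaEx3}. Write $s_j := \int_E x^j\,dx/h(x)$; by \eqref{lemmaEx3} one has $s_0 = s_1 = \dots = s_{l-2} = 0$ and $s_{l-1} = 1$. For a polynomial $R(x) = \sum_{\nu=0}^{l-2} r_\nu x^\nu$ of degree at most $l-2$ the numbers $\int_E x^j R(x)\,dx/h(x) = \sum_{\nu=0}^{l-2} r_\nu s_{j+\nu}$, $j = 1,\dots,l-1$, depend linearly on $(r_0,\dots,r_{l-2})$ through the Hankel-type matrix $M = (s_{j+\nu})_{1\le j\le l-1,\ 0\le\nu\le l-2}$. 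First I would observe that, since $s_{j+\nu} = 0$ whenever $j+\nu\le l-2$ while $s_{j+\nu} = s_{l-1} = 1$ on the anti-diagonal $\{j+\nu = l-1\}$, the matrix $M$ is anti-triangular with nonzero anti-diagonal, hence $\det M = \pm 1 \ne 0$. (Equivalently: if all of $\int_E x^j R(x)\,dx/h(x)$, $j = 1,\dots,l-1$, vanish, then inspecting $j = 1, 2, \dots$ in turn forces $r_{l-2} = r_{l-3} = \dots = r_0 = 0$.) Thus $R \mapsto \bigl(\int_E x^j R(x)\,dx/h(x)\bigr)_{j=1}^{l-1}$ is a linear bijection from the space of polynomials of degree $\le l-2$ onto $\mathbb R^{l-1}$.

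Granting this, part a) follows by writing $P(x) = x^{l-1} + R(x)$ with $\deg R \le l-2$: condition \eqref{lemmaEx1} then reads $\int_E x^j R(x)\,dx/h(x) = A_j - s_{j+l-1}$ for $j = 1,\dots,l-1$, which has exactly one solution $R$, hence exactly one admissible $P$. For part b) I would write $Q(x) = q_l x^l + q_{l-1} x^{l-1} + R(x)$ with $q_l, q_{l-1}$ the two prescribed leading coefficients and $\deg R \le l-2$; then \eqref{lemmaEx2} becomes $\int_E x^j R(x)\,dx/h(x) = B_j - q_l s_{j+l} - q_{l-1} s_{j+l-1}$ for $j = 1,\dots,l-1$, again uniquely solvable by the same bijection.

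There is no genuine obstacle here; the one point to be checked with care is the nondegeneracy of $M$, and the only structural input it uses is the vanishing $s_0 = \dots = s_{l-2} = 0$ from \eqref{lemmaEx3} --- without that, $M$ would be an arbitrary Hankel matrix with no reason to be invertible, so it is worth recording explicitly that this is precisely where \eqref{lemmaEx3} enters. Beyond that one only needs to keep track of the index range $1 \le j+\nu \le 2l-3$ to be sure every anti-diagonal entry of $M$ is indeed $s_{l-1}$, and to note that all the moments $s_k$, $k = 0,\dots,2l-1$, appearing above are finite; both are routine.
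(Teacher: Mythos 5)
Your proof is correct and is essentially the argument the paper intends: the paper simply writes ``Hence the following lemma holds'' after recording the moment identities \eqref{lemmaEx3}, and your anti-triangular Hankel matrix $M=(s_{j+\nu})$ with nonvanishing anti-diagonal $j+\nu=l-1$ is exactly the routine linear algebra being left to the reader. The only cosmetic point is the normalization of $s_{l-1}$ (a factor of $\pi$ depending on how one reads \eqref{S10} versus \eqref{lemmaEx3}), which does not affect the nondegeneracy of $M$ or the conclusion.
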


Now we are ready to state our first main result.

\begin{thm}\label{thm4.1}
Let $\mu$ be given by \eqref{7_2} and let $w \in {\rm Sz}(E).$ a) The subsequence
of solutions $({\mathfrak x}_{1,n_\nu},...,{\mathfrak x}_{l-1,n_\nu} )_{\nu \in \mathbb N}$
of \eqref{nw} converges if and only if $( \int x P^2_{n_\nu} d\mu, ...,
\int x^{l-1} P^2_{n_\nu} d\mu,$ $\sqrt{\lambda_{2 + n_\nu}} \int x P_{1 + n_\nu} P_{n_\nu}d\mu,$
$... ,\sqrt{\lambda_{2 + n_\nu}}\int x^{l-1}$ $P_{1 + n_\nu}$
$P_{n_\nu} d\mu )_{\nu \in \mathbb N}$ converges.

Furthermore, the map $\tau,$ given by
\begin{equation}\label{thmMt0}
\begin{split}
  \left( {\mathfrak y}_1, ..., {\mathfrak y}_{l-1}  \right) \mapsto
   & \left( \int_E x G(x) \frac{dx}{h(x)},..., \int_E x^{l-1} G(x) \frac{dx}{h(x)}, \right.\\
  & \left. \int_E \frac{x F(x)}{2} \frac{dx}{h(x)},..., \int_E \frac{x^{l-1} F(x)}{2}
           \frac{dx}{h(x)} \right)
\end{split}
\end{equation}
where,
\begin{equation}\label{thmMt1}
    G(x) := \prod\limits_{j = 1}^{l - 1} (x - y_j), \ F(x) := \left( x + \sum y_j  -
    c_1 \right) G(x) - \sum\limits_{j = 1}^{l - 1} \frac{\delta_j \sqrt{H(y_j)}
    G(x)}{G^{'}(y_j) (x - y_j)},
\end{equation}
$c_1$ as in \eqref{P3} and $\delta_j \sqrt{H(y_j)} = \sqrt{H({\mathfrak y}_j)}$,
is a homeomorphism between the set of accumulation points of the
sequence of solutions $({\mathfrak x}_{1,n},...,$ ${\mathfrak x}_{l-1,n})_{n \in \mathbb N}$
of \eqref{nw} and of the sequence $( \int x P^2_{n} d\mu, ...,$ $ \int x^{l-1} P^2_{n} d\mu,$
$\sqrt{\lambda_{2 + n}}$ $\int x $ $P_{1 + n} P_{n}d\mu,$ $... ,\sqrt{\lambda_{2 + n}}\int x^{l-1}
P_{1 + n} P_{n}d\mu )_{n \in \mathbb N}$

b) If $1, \omega_1(\infty),..., \omega_{l-1}(\infty)$ are linearly independent
over $\mathbb Q,$ then $\tau$ is a homeomorphism from $\sf{X}_{j=1}^{l-1}$
$( [a_{2j}, a_{2j+1}]^+ \cup$ $[a_{2j},a_{2j+1}]^-)$ into the set of accumulation
points of $( \int x P^2_{n} d\mu, ...,$ $ \int x^{l-1} P^2_{n} d\mu,$
$\sqrt{\lambda_{2 + n}}$ $ \int x P_{1 + n} P_{n}d\mu,$
$... ,$ $\sqrt{\lambda_{2 + n}}\int x^{l-1} P_{1 + n} P_{n}d\mu )_{n \in \mathbb N}.$
\end{thm}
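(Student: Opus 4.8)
The plan is to decompose $\tau$ as the composition of two maps: first the classical Abel map $\mathcal{A}$ of \eqref{ct}, which sends $({\mathfrak y}_1,\dots,{\mathfrak y}_{l-1})$ to a point of ${\rm Jac}\,{\mathfrak R}/\mathbb{R}$, and then a map from the Jacobi variety to the moment vectors; but it is cleaner to factor $\tau$ through the pair of polynomials $(G,F)$ defined in \eqref{thmMt1}. Concretely, I would set up the diagram
\[
\bigl({\mathfrak y}_1,\dots,{\mathfrak y}_{l-1}\bigr)\ \longmapsto\ (G,F)\ \longmapsto\ \Bigl(\textstyle\int_E x^jG\,\tfrac{dx}{h},\ \int_E \tfrac{x^jF}{2}\,\tfrac{dx}{h}\Bigr)_{j=1}^{l-1}.
\]
The first arrow is a homeomorphism onto its image by inspection of \eqref{thmMt1}: $G$ determines and is determined by the unordered tuple $(y_1,\dots,y_{l-1})$ with each $y_j\in[a_{2j},a_{2j+1}]$, and once the signs $\delta_j$ (equivalently the sheet labels, equivalently $\sqrt{H({\mathfrak y}_j)}$) are fixed, $F$ is the explicit expression \eqref{thmMt1}, which depends continuously on $({\mathfrak y}_1,\dots,{\mathfrak y}_{l-1})$; one must be slightly careful at the branch points $a_{2j},a_{2j+1}$, where the two copies of the endpoint are identified and $\sqrt{H(y_j)}=0$, so the Lagrange-type term in $F$ has a removable singularity and $F$ extends continuously — this is exactly the statement that $\sf{X}_{j=1}^{l-1}([a_{2j},a_{2j+1}]^+\cup[a_{2j},a_{2j+1}]^-)$ is a torus and $F$ is well-defined on it. The second arrow is a homeomorphism onto its image by Lemma \ref{lemmaE}: part a) says the map $P\mapsto(\int_E x^jP\,\tfrac{dx}{h})_{j=1}^{l-1}$ from monic degree-$(l-1)$ polynomials to $\mathbb{R}^{l-1}$ is a linear bijection, hence $G$ is recovered continuously from the first $l-1$ coordinates of $\tau$; part b) does the same for $F$ (degree $l$ with two fixed leading coefficients — the leading coefficient is $1$ and the next is $\sum y_j-c_1$, which is already known once $G$ is known) from the second $l-1$ coordinates. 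Composing, $\tau$ is injective and continuous with continuous inverse on its image.

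Next I would identify the image of $\tau$ with the set of accumulation points of the moment sequence. This is where Proposition \ref{prop1} and Theorem \ref{under} enter. Writing $\hat g_{(n)}=\prod_{j=1}^{l-1}(x-x_{j,n})$ and $\hat f_{(n+1)}$ as in \eqref{P3}, Corollary \ref{cor1} gives $(P_n\mathcal{Q}_n)(z)=\hat g_{(n)}(z)/\sqrt{H(z)}+o(1)$ and Theorem \ref{under} gives $2\sqrt{\lambda_{n+2}}(P_n\mathcal{Q}_{n+1})(z)=(\hat f_{(n+1)}(z)-\sqrt{H}(z))/\sqrt{H}(z)+o(1)$ uniformly on compacta of $\Omega$. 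Comparing with the generating-function identities \eqref{S10}–\eqref{lemmaEx3}, i.e. $\tfrac1\pi\int_E\tfrac1{z-t}\tfrac{dt}{h(t)}=1/\sqrt{H(z)}$, and using \eqref{G1}–\eqref{G2}, one reads off
\[
\int_E x^j P_n^2\,d\mu = \frac1\pi\int_E x^j\,\hat g_{(n)}(x)\,\frac{dx}{h(x)}+o(1),\qquad
\sqrt{\lambda_{n+2}}\int_E x^j P_{n+1}P_n\,d\mu = \frac1\pi\int_E \frac{x^j\,\hat f_{(n+1)}(x)}{2}\,\frac{dx}{h(x)}+o(1)
\]
for $j=1,\dots,l-1$ (the constant $1/\pi$ is absorbed by the normalization $c_0=1$ in \eqref{lemmaEx3}; I will keep the bookkeeping consistent with how $h$ versus $\pi h$ appears in \eqref{t5} and \eqref{S10}). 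Therefore the moment vector at step $n$ equals $\tau({\mathfrak x}_{1,n},\dots,{\mathfrak x}_{l-1,n})+o(1)$. Since $\tau$ is a homeomorphism onto its image and $\sf{X}([a_{2j},a_{2j+1}]^+\cup[a_{2j},a_{2j+1}]^-)$ is compact, the set of accumulation points of $\bigl(\tau({\mathfrak x}_{1,n},\dots)\bigr)_n$ is exactly $\tau$ of the set of accumulation points of $({\mathfrak x}_{1,n},\dots,{\mathfrak x}_{l-1,n})_n$, and the $o(1)$ perturbation does not change the accumulation set; simultaneous convergence of subsequences follows from $\tau$ being a homeomorphism. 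This proves part a).

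For part b), I would invoke the Jacobi inversion description \eqref{nw} together with the fact that $\mathcal{A}$ in \eqref{ct} is a holomorphic bijection from $\sf{X}_{j=1}^{l-1}([a_{2j},a_{2j+1}]^+\cup[a_{2j},a_{2j+1}]^-)$ onto ${\rm Jac}\,{\mathfrak R}/\mathbb{R}$. By \eqref{nw}, $\mathcal{A}({\mathfrak x}_{1,n},\dots,{\mathfrak x}_{l-1,n})$ equals $-(n-\tfrac{l-1}{2})\int_{\infty^-}^{\infty^+}\varphi_k$ plus an $n$-independent constant, modulo periods; by \eqref{ep6t1} the increment $\int_{\infty^-}^{\infty^+}\varphi_k=\sum_\kappa\omega_\kappa(\infty)B_{k\kappa}$, so in the coordinates of ${\rm Jac}\,{\mathfrak R}/\mathbb{R}=\mathbb{R}^{l-1}/B\mathbb{Z}^{l-1}$ the orbit $\bigl(\mathcal{A}({\mathfrak x}_{1,n},\dots)\bigr)_n$ is the arithmetic progression $-n\,B\mbox{\boldmath$\omega$}(\infty)+\text{const}$ modulo $B\mathbb{Z}^{l-1}$, which is affinely equivalent to $(n\mbox{\boldmath$\omega$}(\infty))_n$ modulo $1$ in $\mathbb{R}^{l-1}/\mathbb{Z}^{l-1}$. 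When $1,\omega_1(\infty),\dots,\omega_{l-1}(\infty)$ are linearly independent over $\mathbb{Q}$, Weyl's equidistribution theorem makes this orbit dense, so every point of $\sf{X}_{j=1}^{l-1}([a_{2j},a_{2j+1}]^+\cup[a_{2j},a_{2j+1}]^-)$ is an accumulation point of $({\mathfrak x}_{1,n},\dots,{\mathfrak x}_{l-1,n})_n$; applying the homeomorphism $\tau$ from part a) gives the claim.

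The main obstacle I anticipate is not any single step but rather the careful handling of the torus structure at the gap endpoints: the functions $G$ and especially $F$ must be shown to depend continuously on the point of $\sf{X}([a_{2j},a_{2j+1}]^+\cup[a_{2j},a_{2j+1}]^-)$ including the identification of ${\mathfrak y}_j^+$ with ${\mathfrak y}_j^-$ at $y_j=a_{2j}$ or $a_{2j+1}$, where the sheet label $\delta_j$ jumps but $\sqrt{H({\mathfrak y}_j)}\to0$ cancels the discontinuity in \eqref{thmMt1}; getting this exactly right (and compatible with the conventions in Proposition \ref{prop1}) is the delicate bookkeeping. A secondary point is making the $o(1)$ in the moment asymptotics uniform enough that passing to accumulation sets is legitimate — but this is immediate from the uniform-on-compacta convergence in Corollary \ref{cor1} and Theorem \ref{under} once one fixes a contour in $\Omega$ around $E$ and reads off finitely many Taylor coefficients.
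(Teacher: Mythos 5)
Your proposal is correct and its architecture coincides with the paper's: the same factorization of $\tau$ through the polynomial pair $(G,F)$ with injectivity supplied by the Lagrange interpolation property and Lemma \ref{lemmaE}, the same use of Corollary \ref{cor1}, Theorem \ref{under} and Proposition \ref{prop1} to match moments with the coefficients of $\hat g_{(n)}/\sqrt H$ and $(\hat f_{(n+1)}-\sqrt H)/2\sqrt H$, and the same Kronecker--Abel argument for part b). The one place where you genuinely depart from the paper is the sufficiency half of a). The paper first invokes Corollary \ref{corR31} and an induction to upgrade convergence of the first $l-1$ moments to convergence of \emph{all} moments, hence of the Cauchy transforms on a neighborhood of $\infty$, and only then recovers $G$, $F$ and finally the points $\mathfrak x_{j,n_\nu}$ via the uniqueness in Proposition \ref{prop1}. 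You instead observe that the single relation $M_n=\tau(\mathfrak x_{1,n},\ldots,\mathfrak x_{l-1,n})+o(1)$ (with uniform $o(1)$, obtained by Cauchy integrals over a fixed contour), combined with compactness of the torus and injectivity of $\tau$, already forces convergence of $(\mathfrak x_{j,n_\nu})$ whenever the truncated moment vector converges; this bypasses Corollary \ref{corR31} entirely and is arguably cleaner. The trade-off is that the paper's detour through Corollary \ref{corR31} is not wasted: the fact that \emph{all} higher moments converge is reused later (e.g.\ in Theorem \ref{5.4}), whereas your argument only yields convergence of the finitely many moments in the statement. Your explicit attention to the continuity of $F$ at the branch points $a_{2j},a_{2j+1}$ (where $\sqrt{H(y_j)}\to 0$ compensates the jump in $\delta_j$) and to the $1/\pi$ normalization mismatch between \eqref{thmMt0} and \eqref{M3x2}--\eqref{M3x3} addresses two points the paper glosses over.
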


\begin{proof}
At the beginning let us observe that the map $\tau$ is a continuous one to one map
from $\sf{X}_{j=1}^{l-1}$ $([a_{2j},a_{2j+1}]^{+} \cup [a_{2j},a_{2j+1}]^{-})$
on its range. Indeed, recall first the obvious fact that there is a unique
correspondence between a point ${\mathfrak y} \in \sf{X}_{j=1}^{l-1}$
$([a_{2j},a_{2j+1}]^{+} \cup [a_{2j},a_{2j+1}]^{-})$ and
$(y,\delta \sqrt{H(y)}) = ({\rm pr}({\mathfrak y}), \sqrt{H({\mathfrak y})} ),$
where $\delta = \pm 1$ if ${\mathfrak y} \in {\mathfrak R}^{\pm}.$
Since the polynomial $F(x) - \left( x + (\sum y_j - c_1) \right) \cdot$ $G(x)$ is
the unique Lagrange interpolation polynomial of degree $l - 2$ which takes on at
the $y_j$'s the values $- \delta_j \sqrt{H(y_j)},$ it follows that there is a
unique correspondence between the points
$\left( {\mathfrak y}_1, ... , {\mathfrak y}_{l-1} \right)$ and the polynomials
$(G,F),$ defined in \eqref{thmMt1}. By Lemma \ref{lemmaE} the observation is proved.

a) Necessity of the first statement. Let $({\mathfrak x}_{1,n_\nu},...,
{\mathfrak x}_{l-1,n_\nu})_{\nu \in \mathbb N}$ be a sequence of solutions of
\eqref{nw} with limit $({\mathfrak y}_1,...,{\mathfrak y}_{l-1}),$ that is,
$j = 1,...,l-1,$
$$ x_{j, n_\nu} \underset {\nu \to \infty} \longrightarrow y_j
{\rm \ and \ }
\delta_{j,n_\nu} \sqrt{H(x_{j,n_\nu})} \underset {\nu \to \infty}
\longrightarrow \delta_{j} \sqrt{H(y_{j})}.$$
By Proposition \ref{prop1} it follows that there are unique polynomials
$\hat{g}_{(n_\nu)}$ and $\hat{f}_{(1 +  n_\nu)}$ such that uniformly on
$\Omega$ \begin{equation}\label{M3x1}
   \hat{g}_{(n_\nu)}(x) \underset {\nu \to \infty} \longrightarrow G(x) {\rm \ and \ }
   \hat{f}_{(1 + n_\nu)}(x) \underset {\nu \to \infty} \longrightarrow F(x),
\end{equation}
where for the second relation we took \eqref{P3} into account and that $G$ and $F$ are
given by \eqref{thmMt1}.

Next it follows by Corollary \ref{cor1} and \eqref{M3x1} that uniformly on compact
subsets of $\Omega$
\begin{equation}\label{M3x2}
\begin{split}
    \lim\limits_\nu P_{n_\nu} {\mathcal Q}_{n_\nu}(z)
    & = \lim\limits_\nu \int \frac{P^2_{n_\nu}(x)}
      {z - x} d\mu(x) = \frac{G(z)}{\sqrt{H(z)}}\\
    & = \frac{1}{\pi} \int_E \frac{G(x)}{z - x} \frac{dx}{h(x)}
\end{split}
\end{equation}
where the last equality follows by the Sochozki-Plemelj's formula using the fact
that $G \in {\mathbb P}_{l-1}.$ Analogously we obtain by Theorem \ref{under} that
\begin{equation}\label{M3x3}
\begin{split}
    \lim\limits_\nu \sqrt{\lambda_{2 + n_\nu}} P_{n_\nu} {\mathcal Q}_{1 + n_\nu}(z)
    & = \lim\limits_\nu \sqrt{\lambda_{2+n_\nu}} \int \frac{P_{n_\nu}(x) P_{1 + n_\nu}(x)}
        {z - x} d\mu(x) \\
    & = \frac{F(z) - \sqrt{H(z)}}{2 \sqrt{H(z)}} = \frac{1}{2 \pi}
        \int_E \frac{F(x)}{z - x} \frac{dx}{h(x)}
\end{split}
\end{equation}
where the last equality follows by Sochozki-Plemelj again and the fact that $-1 +
(F/\sqrt{H})(z) = O(\frac{1}{z})$ as $z \to \infty,$ since $f_{(1 + n_\nu)}$ has this
property by \eqref{P1}. Moreover the first $l-1$ coefficients of the series expansions
in \eqref{M3x2} and \eqref{M3x3} converge which proves the necessity part of the first
statement of a).

Furthermore, by \eqref{M3x2} and \eqref{M3x3} we have shown also, that
$$\lim\limits_\nu \left( \int x P^2_{n_\nu}, ..., \int x^{l-1} P^2_{n_\nu},
\int x P_{1 + n_\nu} P_{n_\nu}, ... ,
\int x^{l-1} P_{1 + n_\nu} P_{n_\nu} \right)$$
is in the range of the restricted map $\tau_/$ whose domain is the set of
accumulation points of the solutions of \eqref{nw}.

Sufficiency of the first statement of a). Suppose that
$\lim\limits_\nu \left( \int x P^2_{n_\nu}, ...,\right.$ $\int x^{l-1} $ $P^2_{n_\nu},$
$\sqrt{\lambda_{2 + n_\nu}}$ $\int x P_{1 + n_\nu} P_{n_\nu}, ... ,$
$\sqrt{\lambda_{2 + n_\nu}}$ $\int x^{l-1} $ $\left. P_{1 + n_\nu} P_{n_\nu} \right)$
exists. By \eqref{R31x1} and \eqref{R31x2} and induction arguments it
follows that $\lim_\nu \int x^j P^2_{n_\nu}$ and
$\lim_\nu \sqrt{\lambda_{2 + n_\nu}} \int x^j P_{1 + n_\nu} P_{n_\nu}$
exist for every $j \in \mathbb N$ and thus $\lim_\nu \int \frac{P^2_{n_\nu}}{z - x}$
and $\lim_\nu \sqrt{\lambda_{2+n_\nu}} \int \frac{P_{1+n_\nu}P_{n_\nu}}{z - x}$ is
uniformly convergent at a neighborhood of $z = \infty$ and thus on compact subsets of
$\mathbb C \backslash {\rm supp}(\mu).$  Corollary \ref{cor1} and Theorem \ref{under}
imply that the relations \eqref{M3x1}-\eqref{M3x3} hold, where $G, F$ are by Lemma
\ref{lemmaE} uniquely determined by $\lim\limits_\nu \left( \int x P^2_{n_\nu}, ...,
\right.$ $\int x^{l-1}$ $P^2_{n_\nu},$ $\sqrt{\lambda_{2 + n_\nu}} \int x P_{1 + n_\nu}
P_{n_\nu}, ... ,$ $\sqrt{\lambda_{2 + n_\nu}}$ $\left. \int x^{l-1} P_{1 + n_\nu}
P_{n_\nu} \right).$

Now let us take a look at the sequence of solutions of \eqref{nw}
$({\mathfrak x}_{1,n_\nu},...,$ ${\mathfrak x}_{l-1,n_\nu})_{\nu \in \mathbb N}$
which can be considered as the sequence
$\left( (x_{1,n_\nu}, \delta_{1,n_\nu} \sqrt{H(x_{1,n_\nu})}),\right.$
$\left. ...,(x_{l-1,n_\nu}, \delta_{l-1,n_\nu} \sqrt{H(x_{l-1,n_\nu})}) \right)_{\nu \in \mathbb N},$
where $\delta_{j,n_\nu} = \pm 1$ if ${\mathfrak x}_{j,n_\nu} \in {\mathfrak R}^{\pm}.$
Then it follows by Proposition \ref{prop1}
if $({\mathfrak x}_{1,n_\nu},...,{\mathfrak x}_{l-1,n_\nu})$
has two different limit points then the uniquely associated sequence of polynomials
$(\hat{g}_{(n_\nu)},\hat{f}_{(1+n_\nu)})$ has two different limit points. But this
contradicts \eqref{M3x1}. Hence there exists
$\lim_\nu ({\mathfrak x}_{1,n_\nu},...,{\mathfrak x}_{l-1,n_\nu}) =
({\mathfrak y}_{1},...,{\mathfrak y}_{l-1})$ and the first statement of a) is proved.

Furthermore, since \eqref{M3x2} - \eqref{M3x3} hold,
$({\mathfrak y}_{1},...,{\mathfrak y}_{l-1})$ is mapped by the continuous map $\tau_/$
to $\lim\limits_\nu \left( \int x P^2_{n_\nu}, ...,\right.$ $ \int x^{l-1} P^2_{n_\nu},
\sqrt{\lambda_{2 + n_\nu}} \int x P_{1 + n_\nu} P_{n_\nu}, ... ,$
$\sqrt{\lambda_{2 + n_\nu}}$ $\int x^{l-1} P_{1 + n_\nu} P_{n_\nu} \left. \right),$ that is,
$\tau_/$ is an onto map between the two sets of accumulation points and the second statement
of a) is proved.

b) We claim that the set of accumulation points of the sequence of solutions of \eqref{nw}
is the set $\sf{X}_{j=1}^{l-1}$ $([a_{2j},a_{2j+1}]^{+} \cup [a_{2j},a_{2j+1}]^{-}).$
Obviously it suffices to show that for given $\mbox{\boldmath$\mathfrak y$}=
(\mathfrak y_1,...,\mathfrak y_{l-1}) \in
\sf{X}_{j=1}^{l-1}$ $([a_{2j},a_{2j+1}]^{+} \cup [a_{2j},a_{2j+1}]^{-})$
there exists a sequence $\mbox{\boldmath$\mathfrak x$}_{n_\nu} :=
({\mathfrak x}_{1,n_\nu},...,{\mathfrak x}_{l-1,n_\nu})_{\nu \in \mathbb N}$
of \eqref{nw} such that $\lim_\nu \mbox{\boldmath$\mathfrak x$}_{n_\nu} =
\mbox{\boldmath$\mathfrak y$}.$ Since
$1,\omega_1(\infty),...,\omega_{l-1}(\infty)$ is linearly independent it follows
by Kronecker's Theorem \cite{Hla,Kok} that any sequence
$( - n \mbox{\boldmath$\mathfrak \omega$}(\infty)
+ \mbox{\boldmath$c$})_{n \in \mathbb N},$ where $\mbox{\boldmath$c$} =
(c_1,...,c_{l-1})$ is an arbitrary constant, is, modulo $1,$ dense in $[0,1]^{l-1}$.
Thus by the equivalence of \eqref{t9} and \eqref{nw}
$( - n \int_{- \infty}^{\infty} \mbox{\boldmath$\varphi$} +
\tilde{\mbox{\boldmath$c$}})_{n \in \mathbb N}$ is modulo periods $B_{j k},$
dense in ${\rm Jac \ } {\mathfrak R}/{\mathbb R} $ where the constant
$\tilde{\mbox{\boldmath$c$}} = (\tilde{c}_1,...,\tilde{c}_{l-1})$ is given
by that part of the RHS of \eqref{nw} which does not depend on $n$ and where
we have put $\mbox{\boldmath$\varphi$} = (\varphi_1,...,\varphi_{l-1}),$.
Hence there exists a subsequence $(n_\nu)$
of natural numbers such that
\begin{equation}\label{need}
     \left( - n_\nu \int_{- \infty}^{\infty} \mbox{\boldmath$\varphi$}
     +  \tilde{\mbox{\boldmath$c$}}\right)_{\nu \in \mathbb N} \to
     {\mathcal A}(\mbox{\boldmath$\mathfrak y$})\quad \quad \text{modulo periods}\; B_{jk},
     \end{equation}
where ${\mathcal A}$ is the Abel map from \eqref{ct}. On the other hand
to the sequence $(P_{n_\nu})_{\nu \in \mathbb N}$ of orthonormal polynomials
there exist points $\mbox{\boldmath$\mathfrak x$}_{n_\nu} =
({\mathfrak x}_{1,n_\nu},...,{\mathfrak x}_{l-1,n_\nu})$ such that \eqref{nw}
holds, that is, that
$$ {\mathcal A}( \mbox{\boldmath$\mathfrak x$}_{n_\nu} )
= - n_\nu \int_{-\infty}^{+ \infty}  \mbox{\boldmath$\varphi$}  +
\tilde{\mbox{\boldmath$c$}} \quad \quad \text{modulo periods}\; B_{jk},$$
By \eqref{need} and the bijectivity of ${\mathcal A}$ the assertion follows.
\end{proof}

\begin{remark}
If one wants to get rid of the Riemann-surface the homeomorphism from Theorem \ref{thm4.1}
a) may be written also as the map from $A := \{ ((y_1, \delta_1),...,$
$(y_{l-1},\delta_{l-1})): y_j \in [a_{2j},a_{2j+1}],$ $\delta_j \in \{ -1,1 \} {\rm \ if \ }
y_j \in (a_{2j},a_{2j+1}) {\rm \ and \ } \delta_j = 0 {\rm \ if \ } y_j \in $
$\{ a_{2j},a_{2j+1} \} {\rm \ for \ } j = 1,...,l-1 \}$ into the set of accumulation points
of the sequence $( \int x P^2_n, ...,$ $\int x^{l-1} P^2_n,$ $\sqrt{\lambda_{2 + n}}
\int x^{l-1} P_{1 + n}$ $P_n, ... ,$ $\sqrt{\lambda_{2 + n}}$ $\int x^{l-1} P_{1 + n}$
$P_n )_{n \in \mathbb N},$ where $\left((y_1, \delta_1), ..., (y_{l-1}, \delta_{l-1})
\right) \mapsto$ $  ( \int x$ $G(x)$ $ \frac{dx}{h(x)}, ...,$ $\int x^{l-1}$ $ G(x) $
$ \frac{dx}{h(x)},\int \frac{x F(x)}{2} \frac{dx}{h(x)},...,\int \frac{x^{l-1} F(x)}{2}
\frac{dx}{h(x)} ).$
\end{remark}

\begin{cor}\label{cor4.4}
Suppose that the harmonic measures $1, \omega_1(\infty),...,\omega_{l-1}(\infty)$ are
linearly independent over $\mathbb Q$. Then the following
statements hold:

a) The set of limit points of the associated measures $\{\mu^{(n)}\}_
{n \in \mathbb N},$ see \eqref{a3}, with respect to weak convergence is the set of measures
\begin{equation*}
\begin{split}
  {\mathcal G} :=
  & \{ \frac{h(t)}{2 \pi \prod_{j=1}^{l-1}(t - y_j)} dt + \sum\limits_{j=1}^{l-1}
    \frac{1-\delta_j}{2}
    \frac{\sqrt{H(y_j)}}{\frac{d}{dt} ( \prod_{j = 1}^{l - 1}
    (t - y_j) )_{t = y_j} }\delta(t-y_j) : \\
  & \left. y_j \in [a_{2j}, a_{2 j + 1}], \delta_j \in \{ \pm 1 \} {\rm \ for \ }
    j = 1, ..., l-1  \right\}. \\
\end{split}
\end{equation*}

b) Let ${\mathfrak s}$ be the map associated with coefficient stripping, i.e.
${\mathfrak s} (\mu) = \mu^{(1)}.$ Then ${\mathfrak s}(\mathcal G) \subseteq \mathcal G$
and for every $\mu \in \mathcal G$ the orbit under composition
$\{  {\mathfrak s}^n(\mu) : n \in \mathbb N \}$ is dense in $\mathcal G$ with respect to
weak convergence.
\end{cor}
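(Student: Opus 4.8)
\emph{Part a).} The plan is to transfer the problem to the Dirichlet data $({\mathfrak x}_{1,n},\dots,{\mathfrak x}_{l-1,n})$ of \eqref{nw} by means of the Cauchy-transform asymptotics \eqref{a3}. Since the recurrence coefficients of $\mu$ are bounded (by \eqref{F1}), all the shifted measures $\mu^{(n)}$ are carried by one fixed compact interval; hence the set of weak limit points of $(\mu^{(n)})_n$ equals that of $(\mu^{(n+1)})_n$, and, for a subsequence $(n_\nu)$, $\mu^{(n_\nu+1)}\to\nu$ weakly if and only if the Cauchy transforms $\int(z-x)^{-1}\,d\mu^{(n_\nu+1)}(x)$ converge locally uniformly on $\C\setminus[a_1,a_{2l}]$ (on such a compact the masses are bounded, so a convergent subsequence of Cauchy transforms forces weak convergence, and the Cauchy transform determines the measure). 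By \eqref{a3} (Theorem \ref{under}), $\int(z-x)^{-1}\,d\mu^{(n+1)}(x)=\frac{\hat f_{(n+1)}(z)-\sqrt{H(z)}}{2\hat g_{(n)}(z)}+o(1)$ uniformly on compacta of $\C\setminus[a_1,a_{2l}]$, and by Proposition \ref{prop1}(b) together with \eqref{P3} the pair $(\hat g_{(n)},\hat f_{(n+1)})$ corresponds bijectively and bicontinuously to $({\mathfrak x}_{1,n},\dots,{\mathfrak x}_{l-1,n})$; repeating, mutatis mutandis, the argument in the proof of Theorem \ref{thm4.1}(a) -- note that a given rational function $\frac{\hat f_{(n+1)}-\sqrt H}{2\hat g_{(n)}}$ determines the pair $(\hat g_{(n)},\hat f_{(n+1)})$, since $\sqrt H$ is not rational -- one obtains that $\mu^{(n_\nu+1)}$ converges weakly if and only if $({\mathfrak x}_{1,n_\nu},\dots,{\mathfrak x}_{l-1,n_\nu})$ converges, say to ${\mathfrak y}=({\mathfrak y}_1,\dots,{\mathfrak y}_{l-1})$ with $({\rm pr}({\mathfrak y}_j),\sqrt{H({\mathfrak y}_j)})=(y_j,\delta_j\sqrt{H(y_j)})$, and that then the limiting Cauchy transform is $\frac{F(z)-\sqrt{H(z)}}{2G(z)}$, with $G,F$ as in \eqref{thmMt1}.

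\emph{Part a), identification of the limit.} The second step is to recognize the limit $\nu=\nu_{\mathfrak y}$ as the member of $\mathcal G$ attached to ${\mathfrak y}$; this is Stieltjes inversion applied to $\frac{F-\sqrt H}{2G}$. Across $E$ one has $(\sqrt H)^{+}-(\sqrt H)^{-}=2ih$ with $h$ as in \eqref{t1}, so $\nu_{\mathfrak y}$ has on $E$ the absolutely continuous density $-\tfrac1{2\pi i}\bigl[\bigl(\tfrac{F-\sqrt H}{2G}\bigr)^{+}-\bigl(\tfrac{F-\sqrt H}{2G}\bigr)^{-}\bigr]=\frac{h}{2\pi G}$, which is positive on $E$ by \eqref{t1} and the interlacing $y_j\in(a_{2j},a_{2j+1})$; off $[a_1,a_{2l}]$ the function $\frac{F-\sqrt H}{2G}$ has a pole at $y_j$ precisely where $F(y_j)-\sqrt{H(y_j)}=-(1+\delta_j)\sqrt{H(y_j)}\neq0$, i.e. over exactly one of the two sheets above $y_j$, with residue $\frac{F(y_j)-\sqrt{H(y_j)}}{2G'(y_j)}$; tracking the branch conventions of \eqref{t1} this residue is exactly the (positive) mass placed at $y_j$ in the definition of $\mathcal G$. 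Hence ${\mathfrak y}\mapsto\nu_{\mathfrak y}$ is a continuous injection of the torus $\sf{X}_{j=1}^{l-1}([a_{2j},a_{2j+1}]^{+}\cup[a_{2j},a_{2j+1}]^{-})$ onto $\mathcal G$. Finally, the proof of Theorem \ref{thm4.1}(b) shows (using $\mathbb Q$-independence of $1,\omega_1(\infty),\dots,\omega_{l-1}(\infty)$ and Kronecker's theorem) that the set of accumulation points of $({\mathfrak x}_{1,n},\dots,{\mathfrak x}_{l-1,n})_n$ is the whole torus; by the first step the set of weak limit points of $(\mu^{(n+1)})_n$ -- hence of $(\mu^{(n)})_n$ -- is therefore exactly $\mathcal G$. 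This proves a).

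\emph{Part b).} First I would note that every $\mu\in\mathcal G$ is itself of the form \eqref{7_2} with $w\in{\rm Sz}(E)$: on $E$ its density $\frac{h}{2\pi G}$ equals $\rho$ times a rational function of $x$ positive on $E$, so $\log w$ is $\rho$-integrable, and it carries at most $l-1$ point masses, all in $\R\setminus E$. Thus Part a) applies with this $\mu$ as the starting measure, giving that the set of weak limit points of $({\mathfrak s}^n(\mu))_n=(\mu^{(n)})_n$ is all of $\mathcal G$. Next, $\mathfrak s$ is continuous for weak convergence on the class of measures at hand: by \eqref{a3} with $n=0$, $\int(z-x)^{-1}\,d({\mathfrak s}\mu)(x)=\bigl(z-\tfrac1{\mu(\R)}\int x\,d\mu\bigr)-\tfrac{\mu(\R)}{\int(z-x)^{-1}\,d\mu(x)}$, a continuous functional of $\mu$ as long as the total mass $\mu(\R)$ stays bounded away from $0$ -- which it does, since for $n\ge1$ the measure ${\mathfrak s}^n(\mu)$ has total mass $\lambda_{n+1}(\mu)$, bounded below, and all these measures sit in one fixed compact. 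Applying Part a) now to the measure $\sigma$ of the Corollary: if ${\mathfrak s}^{n_\nu}(\sigma)\to\mu$ weakly, then ${\mathfrak s}^{n_\nu+1}(\sigma)={\mathfrak s}({\mathfrak s}^{n_\nu}(\sigma))\to{\mathfrak s}(\mu)$, so ${\mathfrak s}(\mu)$ is again a weak limit point of $(\sigma^{(n)})_n$; hence ${\mathfrak s}(\mathcal G)\subseteq\mathcal G$. Consequently $\{{\mathfrak s}^n(\mu):n\in\N\}\subseteq\mathcal G$, and since $\mathcal G$ is closed -- it is the image of the compact torus under the continuous injection ${\mathfrak y}\mapsto\nu_{\mathfrak y}$ -- while by Part a) every point of $\mathcal G$ is a weak limit point of $({\mathfrak s}^n(\mu))_n$, we conclude $\overline{\{{\mathfrak s}^n(\mu):n\in\N\}}=\mathcal G$, i.e. the orbit is dense.

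\emph{Main obstacle.} The delicate point is the identification step in Part a): verifying that the residue of $\frac{F-\sqrt H}{2G}$ at $y_j$ really is the mass written in $\mathcal G$. One has to reconcile the branch of $\sqrt H$ on the gaps fixed by \eqref{t1}, the sheet label $\delta_j$ from \eqref{t9} and \eqref{thmMt1}, the sign of $G'(y_j)$ produced by the interlacing, and the normalization implicit in \eqref{a3} (under which the $\mu^{(n)}$, and the members of $\mathcal G$, are not probability measures but carry total mass $\lambda_{n+1}$); carrying all of these along is bookkeeping, but it is where all the care is needed.
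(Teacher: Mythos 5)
Your argument for part a) is essentially the paper's: both pass through \eqref{a3} to reduce weak convergence of $\mu^{(n+1)}$ to convergence of the pair $(\hat g_{(n)},\hat f_{(n+1)})$, invoke the surjectivity onto the torus established in the proof of Theorem \ref{thm4.1} b) via Kronecker's theorem, and identify the limit measure by the partial--fraction/Stieltjes decomposition of $(\hat f_{(1+n)}-\sqrt H)/(2\hat g_{(n)})$ into an absolutely continuous part $h/(2\pi\hat g_{(n)})$ on $E$ plus point masses at those $x_{j,n}$ with $\delta_{j,n}=-1$ (the paper simply writes this identity down; you flag the sign bookkeeping as the delicate point, which is fair, and you are in fact slightly more complete in that you also argue the reverse inclusion, that every weak limit point lies in $\mathcal G$). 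The one genuine divergence is in part b): for the invariance ${\mathfrak s}(\mathcal G)\subseteq\mathcal G$ the paper cites an external result (\cite{PehSIAM}, Theorem 5) on Bernstein--Szeg\H{o}-type measures, whereas you derive it internally from part a) together with weak continuity of the coefficient-stripping map, expressed through the one-step continued-fraction relation for the Cauchy transforms; this makes the corollary self-contained at the cost of having to check that the total masses stay bounded away from zero and that all the shifted measures live on one compact, both of which you correctly supply. You also make explicit, where the paper leaves it implicit, that every member of $\mathcal G$ is itself of the form \eqref{7_2} with $w\in{\rm Sz}(E)$, so that part a) may legitimately be applied to it for the density statement. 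I see no gap.
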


\begin{proof}
a) By \eqref{a3} on compact subsets of $\mathbb C \setminus [a_1,a_{2l}]$
   $$ \int \frac{1}{z - t} d\mu^{(1+n)} = \sqrt{\lambda_{2+n}} \frac{{\mathcal Q}
      _{1+n}(z)}{{\mathcal Q}_n(z)}
      = \frac{\hat{f}_{(1+n)}(z) - \sqrt{H(z)}}{2 \hat{g}_{(n)}(z)}+ o(1) $$
In the proof of Theorem \ref{thm4.1} we have shown that for each
$\left( (y_1,\delta_1\sqrt{H(y_1)},...,\right.$ $\left. (y_{l-1},\delta_{l-1}\delta_1\sqrt{H(y_{l-1})}) \right)$
there is a sequence $(n_\nu)$ such that
$$ \hat{g}_{(n_\nu)} \underset {\nu \to \infty} \longrightarrow G {\rm \ and \ }
   \hat{f}_{(1 + n_\nu)} \underset {\nu \to \infty} \longrightarrow F ,$$
where $G$ and $F$ are given by \eqref{thmMt1}. Since
\begin{equation*}
\begin{split}
    \frac{\hat{f}_{(1+n)}(z) - \sqrt{H(z)}}{2 \hat{g}_{(n)}(z)}
    & = \frac{1}{2 \pi} \int_E \frac{1}{z - t}
      \frac{h(t)}{\hat{g}_{(n)}(t)} dt \\
    & + \sum \frac{(1-\delta_{j,n})}{2} \frac{\sqrt{H(x_{j,n})}}
      {\hat{g}^{'}_{(n)}(x_{j,n})} \delta(z - x_{j,n})
\end{split}
\end{equation*}
the assertion follows.

b) The invariance with respect to coefficient stripping has been proved in
\cite[Theorem 5]{PehSIAM}. Applying part a) to $\mu \in \mathcal G$ the assertion
follows.
\end{proof}

The set of Jacobi matrices associated with the
set of measures ${\mathcal G}$ is called the isospectral torus nowadays.

\section{Accumulation points of zeros outside the support}

It is well known that polynomials orthogonal with respect to a measure 
$\sigma$ may have zeros in the convex hull of ${\rm supp}(\sigma),$ that is, in the
case under consideration in the gaps $[a_{2j},a_{2j+1}], j \in \{ 1,...,l-1 \}.$ The
same holds for the Weyl solutions ${\mathcal Q}_n.$

\begin{notation}
Let $(n_j)$ be a strictly monotone increasing subsequence of the natural numbers and
let $(f_{n_j})$ be a sequence of functions. We say that a point $y$ is an accumulation
point of zeros of $(f_{n_j})$ if there exists a sequence of points $(y_j)$ such that
$f_{n_j} (y_j) = 0$ and $\lim\limits_j y_j = y.$ As usual, $y$ is called a limit point
of zeros of $(y_j)$ if $U_\varepsilon (y) \setminus \{y \}$, $\varepsilon >0$, contains
an infinite number of $y_j$'s.
\end{notation}

\begin{lemma}\label{HW}
Let $\sigma$ be a positive measure with ${\rm supp}(\sigma)$ bounded and suppose that
$0 < const \leq \lambda_n$ for $n \geq n_0.$ Then the following pairs of sequences
have no common accumulation point of zeros on $\mathbb R \backslash {\rm supp}
(\sigma): $ $(P_{n_j})$ and $(P_{1 + n_j}),$ $({\mathcal Q}_{n_j})$ and
$({\mathcal Q}_{1 + n_j}),$ and $(P_{n_j})$ and $({\mathcal Q}_{n_j}).$
\end{lemma}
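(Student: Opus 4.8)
The engine of the proof is the constant-Wronskian identity for the two recurrence solutions $(P_n)$ and $(\mathcal Q_n)$. A computation with the three-term recurrence shows that $\sqrt{\lambda_{n+2}}\,(P_{n+1}\mathcal Q_n-P_n\mathcal Q_{n+1})$ is independent of $n$, and subtracting \eqref{G2} from \eqref{G3} identifies its value as $1$, i.e.
\[
 \sqrt{\lambda_{n+2}}\bigl(P_{n+1}(z)\mathcal Q_n(z)-P_n(z)\mathcal Q_{n+1}(z)\bigr)=1
\]
for every $z$ and $n$. My plan is to recast everything through the two auxiliary functions $\Phi_n:=P_n\mathcal Q_{n+1}$ and $\Psi_n:=P_{n+1}\mathcal Q_n$, which are analytic on $\C\setminus{\rm supp}(\sigma)$ and, by the Schwarz-inequality bound already used just after \eqref{G3}, uniformly bounded on compact subsets there; hence they form normal families, and the identity above reads $\Psi_n-\Phi_n=1/\sqrt{\lambda_{n+2}}$. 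The elementary but decisive remark is the inclusion of zero sets: every zero of $P_n$ and of $\mathcal Q_{n+1}$ is a zero of $\Phi_n$, while every zero of $P_{n+1}$ and of $\mathcal Q_n$ is a zero of $\Psi_n$.

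Suppose, for contradiction, that a point $y\in\R\setminus{\rm supp}(\sigma)$ were a common accumulation point of zeros of one of the three pairs along the subsequence $(n_j)$. In each of the three cases the zero inclusions route one of the two zero-sequences onto $\Phi_{n_j}$ and the other onto $\Psi_{n_j}$: for $(P_{n_j}),(P_{1+n_j})$ one gets zeros $u_j\to y$ with $\Phi_{n_j}(u_j)=0$ and $\tilde u_j\to y$ with $\Psi_{n_j}(\tilde u_j)=0$; for $(\mathcal Q_{n_j}),(\mathcal Q_{1+n_j})$ one gets $\Psi_{n_j}(u_j)=0$ and $\Phi_{n_j}(\tilde u_j)=0$; and for $(P_{n_j}),(\mathcal Q_{n_j})$ again $\Phi_{n_j}(u_j)=0$ and $\Psi_{n_j}(\tilde u_j)=0$. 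Passing to a further subsequence I may assume $\Phi_{n_j}\to\Phi$ and $\Psi_{n_j}\to\Psi$ locally uniformly near $y$, and $1/\sqrt{\lambda_{n_j+2}}\to\ell$. Since ${\rm supp}(\sigma)$ is bounded the associated Jacobi matrix is bounded, so the $\lambda_n$ are bounded above as well; together with the standing hypothesis $\lambda_n\ge c>0$ this pins $\ell$ into a compact subinterval of $(0,\infty)$, so that $\ell\ne 0$.

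The conclusion is then immediate from Hurwitz's theorem: in every one of the three cases the two zero-sequences accumulate at $y$ while lying on $\Phi_{n_j}$ and on $\Psi_{n_j}$ respectively, which forces $\Phi(y)=0$ and $\Psi(y)=0$; evaluating the limiting Wronskian at $y$ gives $\ell=\Psi(y)-\Phi(y)=0$, contradicting $\ell\ne 0$. The single step needing care — and the only genuine obstacle — is the degenerate branch of the Hurwitz argument, since a limit function such as $\Phi$ could vanish identically and then Hurwitz yields nothing directly. I would dispose of this by splitting cases: if $\Phi\equiv 0$ then $\Phi(y)=0$ trivially, whereas if $\Phi\not\equiv 0$ the zero-counting form of Hurwitz applies (a nonzero value $\Phi(y)$ would exclude zeros of $\Phi_{n_j}$ from a small disk about $y$ for all large $j$, contradicting $u_j\to y$), so $\Phi(y)=0$ in all cases, and symmetrically $\Psi(y)=0$. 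These same few lines settle all three pairs at once, with the boundedness of ${\rm supp}(\sigma)$ entering only to keep the Wronskian limit $\ell$ bounded away from $0$.
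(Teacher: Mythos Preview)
Your argument is correct and is essentially the paper's own proof written out in full. The paper simply observes that $(P_n\mathcal Q_{n+1})$ and $(P_{n+1}\mathcal Q_n)$ are normal families on $\R\setminus{\rm supp}(\sigma)$, hence equicontinuous, and then invokes the Wronskian relation $P_n\mathcal Q_{n+1}-\mathcal Q_n P_{n+1}=-1$ (in the paper's normalization) to reach the contradiction; you do the same thing via a subsequence limit and Hurwitz rather than via equicontinuity, which is an equivalent packaging of the same idea.
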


\begin{proof}
Since the three sequences $(P_n {\mathcal Q}_n), (P_{n} {\mathcal Q}_{n+1})$
and $(P_{n+1} {\mathcal Q}_n)$ are normal families on
$\mathbb R \backslash {\rm supp}(\mu),$ see \eqref{G1}-\eqref{G3}, they are equicontinuous.
Thus the assertion
follows by the well known relation \\

$ \qquad \qquad \qquad \qquad \qquad P_n {\mathcal Q}_{n+1} -
{\mathcal Q}_n P_{n+1} = -1. $
\end{proof}

By the way that $(P_{n_j})$ and $(P_{1+n_j})$ have no common accumulation point if
$y \notin {\rm supp}(\sigma)$ follows also from
\cite{DenSim}.
Thus $y,$ $y \notin {\rm supp \ \sigma},$ is an accumulation point of zeros of $(P_{n_\nu})$
$(({\mathcal Q}_{n_\nu}))$ if and only if $y$ is a common accumulation point of zeros of
$(P_{n_\nu} {\mathcal Q}_{n_\nu})$ and $(P_{n_\nu} {\mathcal Q}_{1 + n_\nu})$ (of
$(P_{n_\nu} {\mathcal Q}_{n_\nu})$ and $(P_{1 + n_\nu} {\mathcal Q}_{n_\nu})$). In the
following it will be convenient to use this way of expression.

\begin{thm}\label{thm4.2}
Let $\mu$ be given by \eqref{7_2} with $w \in {\rm Sz}(E).$ Then
\begin{equation}\label{thmMt2}
  \begin{split}
      & \lim\limits_\nu \left( \int x P^2_{n_\nu} d\mu, ..., \int x^{l-1} P^2_{n_\nu} d\mu,
        \sqrt{\lambda_{2 + n_\nu}} \int x P_{1 + n_\nu} P_{n_\nu} d\mu, ... , \right.\\
      & \left. \qquad \qquad \qquad \qquad \qquad \qquad \qquad \
        \sqrt{\lambda_{2 + n_\nu}}\int x^{l-1} P_{1 + n_\nu} P_{n_\nu} d\mu \right) \\
    = & \left( \int_E x G(x) \frac{dx}{h(x)}, ..., \int_E x^{l-1} G(x) \frac{dx}{h(x)},
        \int_E \frac{x F(x)}{2} \frac{dx}{h(x)}, ..., \int_E \frac{x^{l-1} F(x)}{2}
        \frac{dx}{h(x)} \right),
  \end{split}
\end{equation}
where $G$ and $F$ are defined in \eqref{thmMt1},
$\left( (y_1, \delta_1), ..., (y_{l-1},\delta_{l-1}) \right) \in $ $\sf{X}_{j=1}^{l-1}$ \newline
$\left( \left((a_{2j}, a_{2j+1})\backslash supp(\mu)\right) \times (\{ \pm 1 \}) \right),$
if and only if for $j = 1,...,l-1$ the point
$y_j,$ $y_j \in (a_{2j}, a_{2j+1}) \backslash supp(\mu),$ is a common
accumulation point of zeros of $(P_{n_\nu} Q_{n_\nu})$ and $(P_{(1+\delta_j)/2 + n_\nu}$
$Q_{(1 - \delta_j)/2 + n_\nu} ),$ $\delta_j \in \{ \pm 1 \}.$
\end{thm}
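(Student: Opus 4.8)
The plan is to combine the homeomorphism $\tau$ from Theorem \ref{thm4.1} with the asymptotic formulas of Corollary \ref{cor1} and Theorem \ref{under}, together with the no-common-zero principle in Lemma \ref{HW}. First I would note that by Theorem \ref{thm4.1} a), the left-hand side of \eqref{thmMt2} converges along $(n_\nu)$ if and only if the solution sequence $({\mathfrak x}_{1,n_\nu},\ldots,{\mathfrak x}_{l-1,n_\nu})$ of \eqref{nw} converges, say to $({\mathfrak y}_1,\ldots,{\mathfrak y}_{l-1})$, and in that case the limit is exactly $\tau({\mathfrak y}_1,\ldots,{\mathfrak y}_{l-1})$, which is the right-hand side of \eqref{thmMt2} with $y_j = {\rm pr}({\mathfrak y}_j)$ and $\delta_j\sqrt{H(y_j)} = \sqrt{H({\mathfrak y}_j)}$. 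Hence it suffices to prove: $({\mathfrak x}_{j,n_\nu}) \to {\mathfrak y}_j$ with $y_j \in (a_{2j},a_{2j+1})\setminus{\rm supp}(\mu)$ and $\delta_j = \pm 1$ for all $j$ $\iff$ each such $y_j$ is a common accumulation point of zeros of $(P_{n_\nu}{\mathcal Q}_{n_\nu})$ and $(P_{(1+\delta_j)/2+n_\nu}{\mathcal Q}_{(1-\delta_j)/2+n_\nu})$.

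For the necessity direction, assume $({\mathfrak x}_{j,n_\nu})\to{\mathfrak y}_j$ as above. Then by Proposition \ref{prop1} and \eqref{M3x1} we have $\hat g_{(n_\nu)} \to G$ and $\hat f_{(1+n_\nu)} \to F$ uniformly on $\Omega$, with $G(y_j) = 0$. By Corollary \ref{cor1}, $(P_{n_\nu}{\mathcal Q}_{n_\nu})(z) = \hat g_{(n_\nu)}(z)/\sqrt{H(z)} + o(1) \to G(z)/\sqrt{H(z)}$ uniformly on compacts of $\Omega$; since $G(y_j)=0$ and $y_j\notin{\rm supp}(\mu)$, Hurwitz's theorem gives that $y_j$ is an accumulation point of zeros of $(P_{n_\nu}{\mathcal Q}_{n_\nu})$. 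For the second sequence I distinguish the two cases of $\delta_j$: by \eqref{P4}, $\hat f_{(1+n_\nu)}(x_{j,n_\nu}) = -\delta_{j,n_\nu}\sqrt{H(x_{j,n_\nu})}$, so $(\hat f_{(1+n_\nu)} - \sqrt{H})(x_{j,n_\nu}) = 0$ when $\delta_{j,n_\nu}=+1$ and $(\hat f_{(1+n_\nu)}+\sqrt H)(x_{j,n_\nu})=0$ when $\delta_{j,n_\nu}=-1$; passing to the limit and using \eqref{a1}, \eqref{a2} (which express $2\sqrt{\lambda_{n_\nu+2}}\,P_{n_\nu}{\mathcal Q}_{1+n_\nu}$ and $2\sqrt{\lambda_{n_\nu+2}}\,P_{1+n_\nu}{\mathcal Q}_{n_\nu}$ as $(\hat f_{(1+n_\nu)}\mp\sqrt H)/\sqrt H + o(1)$) together with $\lambda_{n_\nu+2}$ bounded away from $0$, Hurwitz again gives that $y_j$ is an accumulation point of zeros of $(P_{n_\nu}{\mathcal Q}_{1+n_\nu})$ if $\delta_j=+1$ and of $(P_{1+n_\nu}{\mathcal Q}_{n_\nu})$ if $\delta_j=-1$. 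This is precisely the claimed common accumulation point statement, using $(1+\delta_j)/2$ and $(1-\delta_j)/2$ to select the index shift.

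For the sufficiency direction, suppose each $y_j\in(a_{2j},a_{2j+1})\setminus{\rm supp}(\mu)$ is a common accumulation point of zeros of $(P_{n_\nu}{\mathcal Q}_{n_\nu})$ and $(P_{(1+\delta_j)/2+n_\nu}{\mathcal Q}_{(1-\delta_j)/2+n_\nu})$. By passing to a subsequence (still denoted $n_\nu$) along which all $l-1$ zero-sequences converge simultaneously, and using that the normal families $(P_n{\mathcal Q}_n)$, $(P_n{\mathcal Q}_{n+1})$, $(P_{n+1}{\mathcal Q}_n)$ have convergent sub-subsequences, I would extract a limit with $\hat g_{(n_\nu)}\to G$, $\hat f_{(1+n_\nu)}\to F$ where $G(y_j)=0$ for all $j$, hence $G(x)=\prod_j(x-y_j)$. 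The fact that $y_j$ is also an accumulation point of zeros of $(P_{(1+\delta_j)/2+n_\nu}{\mathcal Q}_{(1-\delta_j)/2+n_\nu})$ forces, via \eqref{a1}--\eqref{a2}, that $(F-\sqrt H)(y_j)=0$ when $\delta_j=+1$ and $(F+\sqrt H)(y_j)=0$ when $\delta_j=-1$, i.e. $F(y_j) = -\delta_j\sqrt{H(y_j)}$; this pins down $F$ by \eqref{thmMt1} and identifies the limit of $({\mathfrak x}_{j,n_\nu})$ as ${\mathfrak y}_j\in{\mathfrak R}^{\delta_j}$. Finally, uniqueness (Proposition \ref{prop1} b) combined with the argument in the proof of Theorem \ref{thm4.1} a)) shows the whole sequence $({\mathfrak x}_{1,n_\nu},\ldots,{\mathfrak x}_{l-1,n_\nu})$ converges to $({\mathfrak y}_1,\ldots,{\mathfrak y}_{l-1})$, and then Theorem \ref{thm4.1} a) gives the limit relation \eqref{thmMt2}.

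The main obstacle I anticipate is the bookkeeping of \emph{common} accumulation points: one must be careful that along a given subsequence the zero of $(P_{n_\nu}{\mathcal Q}_{n_\nu})$ near $y_j$ and the zero of the shifted sequence near $y_j$ are genuinely forced to coincide with the same $x_{j,n_\nu}$ in the limit, and that Lemma \ref{HW} is invoked to rule out the degenerate possibility that $y_j$ is simultaneously an accumulation point of zeros of $(P_{n_\nu})$ and $(P_{1+n_\nu})$ (resp. of $({\mathcal Q}_{n_\nu})$ and $({\mathcal Q}_{1+n_\nu})$, or of $(P_{n_\nu})$ and $({\mathcal Q}_{n_\nu})$) — which would make the $\delta_j$-label ill-defined and break the injectivity needed for $\tau$ to be a homeomorphism onto the relevant subset. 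Handling the boundary values $y_j\in\{a_{2j},a_{2j+1}\}$ is excluded here by the hypothesis $y_j\in(a_{2j},a_{2j+1})\setminus{\rm supp}(\mu)$, so no extra care is needed there, but one does need $y_j\notin{\rm supp}(\mu)$ precisely so that Corollary \ref{cor1} and Theorem \ref{under} apply at $y_j$ (they hold on compact subsets of $\Omega$) and Hurwitz's theorem is legitimate.
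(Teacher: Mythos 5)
Your overall route is the paper's: reduce via Theorem \ref{thm4.1}~a) to the convergence of the solutions $({\mathfrak x}_{1,n_\nu},\ldots,{\mathfrak x}_{l-1,n_\nu})$ of \eqref{nw}, then pass between that and the zero statement using the uniform limits \eqref{t13}, \eqref{a1}, \eqref{a2} together with Hurwitz's theorem and the uniqueness of the pair $(G,F)$ from Proposition \ref{prop1}. That is exactly how the paper argues, so the structure is sound.

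There is, however, a concrete sign slip in the step that matches $\delta_j$ to the index shift. By \eqref{P2} (not \eqref{P4}) one has $\hat f_{(1+n_\nu)}(x_{j,n_\nu})=-\delta_{j,n_\nu}\sqrt{H(x_{j,n_\nu})}$, so for $\delta_{j,n_\nu}=+1$ it is $\hat f_{(1+n_\nu)}+\sqrt H$ that vanishes at $x_{j,n_\nu}$, and hence by \eqref{a2} it is $(P_{1+n_\nu}{\mathcal Q}_{n_\nu})$ --- i.e.\ precisely $(P_{(1+\delta_j)/2+n_\nu}{\mathcal Q}_{(1-\delta_j)/2+n_\nu})$ as the theorem asserts --- whose zeros accumulate at $y_j$; you wrote $(\hat f_{(1+n_\nu)}-\sqrt H)(x_{j,n_\nu})=0$ for $\delta_{j,n_\nu}=+1$ and concluded the pairing with $(P_{n_\nu}{\mathcal Q}_{1+n_\nu})$, which is the opposite of the statement you are proving. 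The same flip reappears in your sufficiency step, where you write ``$(F-\sqrt H)(y_j)=0$ when $\delta_j=+1$''; the correct deduction from \eqref{a2} is $(F+\sqrt H)(y_j)=0$, and only then does one land on $F(y_j)=-\delta_j\sqrt{H(y_j)}$ consistently with \eqref{thmMt1} (your two sign errors happen to cancel into the right final formula). Once these signs are straightened out the argument is the paper's proof. A last small point: Lemma \ref{HW} is not actually needed for the theorem as stated, since the statement is already phrased in terms of the products $P{\mathcal Q}$; it is only needed to translate the conclusion into accumulation points of zeros of $(P_n)$ or $({\mathcal Q}_n)$ separately, as in the remark preceding the theorem.
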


\begin{proof}
Necessity. In the proof of the sufficiency part of Theorem \ref{thm4.1}a) we have shown that
relation \eqref{thmMt2} implies with the help of Corollary \ref{corR31} that the relations
\eqref{M3x1} hold.
Moreover by \eqref{P3} the $\delta_{j,n_\nu}$'s from \eqref{P2} satisfy $\delta_{j,n_\nu}
\underset {\nu \to \infty} \longrightarrow \delta_j, \ j = 1,...,l-1,$ where $\delta_j
\in \{ -1,1 \},$ since every zero of $G$ lies in the interior of the gaps. Taking a
look at the three limit relations \eqref{t13}, \eqref{a1} and \eqref{a2} in conjunction
with \eqref{P2} the assertion is proved
using Hurwitz's Theorem \cite{HurCou} about the zeros of uniform convergent sequences
of analytic functions.

Sufficiency. First let us note that for any subsequence $(n_\kappa)$ of $(n_\nu)$ for
which the limit in \eqref{t13}, \eqref{a1} and \eqref{a2} exists the limit functions
$\lim _\kappa \hat{g}_{(n_\kappa)} = G$ and $\lim_\kappa \hat{f}_{(1+ n_\kappa)} = F$
are monic polynomials of degree $l-1$ and $l$, respectively, which satisfy, using the
assumption and Hurwitz Theorem, $G(y_j)=0$ and $F(y_j) = \delta_{j}\sqrt{H(y_j)}$
for $j=1,...,l-1$. Thus $G$ and $F$, recall \eqref{P3}, are unique; in other words the
limit of all three sequences $(P_{n_\nu} {\mathcal Q}_{n_\nu})$,
$(P_{n_\nu} {\mathcal Q}_{1 + n_\nu})$
and $(P_{1 + n_\nu} {\mathcal Q}_{n_\nu})$) exists uniformly on $\Omega$ and is given
by $G$ and $F$ which proves by \eqref{G1}-\eqref{G3}, taking into consideration the
last relation from \eqref{M3x2} and \eqref{M3x3} respectively, the sufficiency part.
\end{proof}

Combining Theorem \ref{thm4.2} and Theorem \ref{thm4.1} b) we obtain immediately
(for a wider class of measures) a result of the author \cite[Theorem 3.9]{PehIMNR}
concerning the denseness of zeros of $(P_n)$ in the gaps. For absolutely continuous,
smooth measures the existence of a sequence $(n_\nu)$ such that $(P_{n_\nu})$ has no
zeros in the gaps was shown first in \cite{Sue1}.

\section{Consequences for the recurrence coefficients}

First let us demonstrate how to express Theorem \ref{thm4.2} in terms of limit
relations of the recurrence coefficients and of the accumulation points of zeros
of $(P_n)$ and $({\mathcal Q}_n).$ It is well known that $\int x^j P^2_n$ and
$\sqrt{\lambda_{n+2}}\int x^j P_{n+1}$ $ P_n,$ $j \in \mathbb N,$ can be expressed
in terms of the recurrence coefficients using the recurrence relation of the
$P_n$'s, e.g.
$$ \int x P^2_n = \alpha_{n+1},  \int x^2 P^2_n = \lambda_{n+2} + \alpha_{n+1}^2 +
   \lambda_{n+1}, ...  $$
and
$$ \sqrt{\lambda_{n+2}} \int x P_{n+1} P_n = \lambda_{n + 2},
   \sqrt{\lambda_{n+2}} \int x^2 P_{n+1} P_n = \lambda_{n + 2} (\alpha_{n+2} +
   \alpha_{n+1}), ...  $$
for a closed formula see \cite{Nev}. Solving the system of equations (recall Lemma \ref{lemmaE})
\begin{equation}\label{R1x1}
   \int_E x^j G(x) \frac{d x}{h(x)} = \lim\limits_\nu \int x^j P^2_{n_\nu} \ \
   j = 0,...,l-1,
\end{equation}
respectively,
\begin{equation}\label{R1x2}
   \int_E x^j F(x) \frac{d x}{h(x)} = \lim\limits_\nu 2 \sqrt{\lambda_{2+n_\nu}}
   \int x^j P_{1+n_\nu} P_{n_\nu} \ \ j = 0,...,l-1
\end{equation}
with the help of \eqref{S10} and \eqref{lemmaEx3} we obtain explicit expressions
for the coefficients of $G(x),$ respectively, $F(x)$ in terms of the coefficients
$c_j$ of the series expansion of $1/\sqrt{H(z)},$ see \eqref{S10}, and of the
accumulation points of the recurrence coefficients. This enables us to write
condition \eqref{thmMt2} of Theorem \ref{thm4.2} in terms of accumulation
points of recurrence coefficients and of zeros only. Let us demonstrate this
for the case of two and three intervals.

\begin{cor}\label{cor5.1}
Let $E = [a_1,a_2] \cup [a_3,a_4]$, $d\mu =w(x)dx$ with $w \in {\rm Sz}(E),$ and
let $\delta \in \{ \pm 1 \}.$ Then $y, y \in (a_2,a_3),$ is a common accumulation
point of $(P_{n_\nu} {\mathcal Q}_{n_\nu})$ and $(P_{(1 + \delta)/2 + n_\nu}
{\mathcal Q}_{(1 - \delta)/2 + n_\nu})$ if and only if
\begin{equation}\label{R2x3}
   \lim\limits_\nu \alpha_{1 + n_\nu} = - y + c_1 {\rm \ and \ }
   \lim\limits_\nu \lambda_{2 + n_\nu} = y(c_1 - y) + c_2 - c_1^2 - \delta \sqrt{H(y)}
\end{equation}

Furthermore, for every $(y,\delta) \in (a_2,a_3) \times \{ -1,1 \}$ there exists a
subsequence $(n_\nu)$ of the natural numbers such that \eqref{R2x3} holds, if $E$
is not the inverse image under a polynomial map.

If $E$ is the inverse image under a polynomial map of degree $N$ then for each $b =
0,...,N-1,$ either $(P_{\nu N + b})_{n \in \mathbb N}$ or $({\mathcal Q}_{\nu N + b})_
{n \in \mathbb N}$ has an accumulation point of zeros at the zero $y$ of the $b-$th
associated polynomial $p_{N-1}^{(b)}$ which lies in $(a_2,a_3).$ The limits of
$(\alpha_{1 + \nu N + b})$ and $(\lambda_{2 + \nu N + b})$ are given by \eqref{R2x3},
where $\delta = \mp 1$ if $y$ is an accumulation point of zeros of $(P_{\nu N + b}),$
respectively, of $({\mathcal Q}_{\nu N + b}).$
\end{cor}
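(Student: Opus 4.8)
The plan is to derive Corollary \ref{cor5.1} from the general Theorem \ref{thm4.2} by specializing to $l=2$ and explicitly inverting the linear system \eqref{R1x1}--\eqref{R1x2}. First I would write down what $G$ and $F$ look like when $l=2$: here $G(x)=x-y$ is a monic linear polynomial, $\hat g_{(n)}(x)=x-x_{1,n}$, and $F(x)=\bigl(x+y-c_1\bigr)(x-y)-\delta\sqrt{H(y)}$ is monic of degree $2$, since the Lagrange interpolation remainder term in \eqref{thmMt1} is a constant $-\delta_1\sqrt{H(y_1)}$. Then I would use \eqref{S10}--\eqref{lemmaEx3}, which for $l=2$ say $\int_E \frac{dx}{h(x)}=1$, $\int_E x\frac{dx}{h(x)}=c_1$ (and more generally that $\int_E x^{j}\frac{dx}{h(x)}$ equals the coefficient $c_{j-1}$ in the expansion of $1/\sqrt{H}$, which is a polynomial in the $c_i$'s of $H$; in particular $\int_E x^{2}\frac{dx}{h(x)}=c_1^{2}-? \,$, to be computed from $1/\sqrt{H(z)}=z^{-2}+2c_1 z^{-3}+\cdots$). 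Plugging $G$ and $F$ into \eqref{R1x1}--\eqref{R1x2} with $j=1$ gives two scalar equations that I would solve for $y$ and $\delta\sqrt{H(y)}$ in terms of $\lim_\nu\int xP^2_{n_\nu}$ and $\lim_\nu 2\sqrt{\lambda_{2+n_\nu}}\int xP_{1+n_\nu}P_{n_\nu}$.

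Next I would invoke the elementary identities recalled just before the corollary, namely $\int xP^2_n=\alpha_{n+1}$ and $\sqrt{\lambda_{n+2}}\int xP_{n+1}P_n=\lambda_{n+2}$, together with $\int x^2P^2_n=\lambda_{n+2}+\alpha_{n+1}^2+\lambda_{n+1}$ if needed for bookkeeping. Substituting these into the inverted system turns the abstract condition of Theorem \ref{thm4.2} — that $(P_{n_\nu}{\mathcal Q}_{n_\nu})$ and $(P_{(1+\delta)/2+n_\nu}{\mathcal Q}_{(1-\delta)/2+n_\nu})$ have the common zero accumulation point $y$ in the gap $(a_2,a_3)$ — into the stated pair of limit relations $\lim_\nu\alpha_{1+n_\nu}=-y+c_1$ and $\lim_\nu\lambda_{2+n_\nu}=y(c_1-y)+c_2-c_1^2-\delta\sqrt{H(y)}$. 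Here I should double-check the constant: $c_2$ denotes the second moment $\int_E x^2\frac{dx}{h(x)}$ in the notation forced by \eqref{S10}, and matching the second leading coefficients of $F$ against $B_1$ produces exactly the combination $c_2-c_1^2$ plus the $-\delta\sqrt{H(y)}$ coming from the interpolation remainder; this is a short but genuine computation that must be carried out carefully once.

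For the existence half, I would split into the two stated cases. If $E$ is not a polynomial preimage, then by the classical fact (see \cite{PehIMNR}, or the discussion around \eqref{t9}) the harmonic measures $1,\omega_1(\infty)$ are linearly independent over $\mathbb Q$, so Theorem \ref{thm4.1}b) (equivalently the density argument via Kronecker's theorem already run inside the proof of Theorem \ref{thm4.1}) guarantees that every point of $([a_2,a_3]^+\cup[a_2,a_3]^-)$ is an accumulation point of the solutions of \eqref{nw}, hence every $(y,\delta)$ with $y\in(a_2,a_3)$ arises; combining with the equivalence just established gives \eqref{R2x3}. If $E$ is the preimage of $[-1,1]$ under a polynomial $T$ of degree $N$, then $\omega_1(\infty)=k/N$ is rational, and the recurrence coefficients and the polynomials $P_n$ are asymptotically $N$-periodic; along each residue class $n\equiv b\ (\mathrm{mod}\ N)$ the limiting Green's data is the reflectionless one whose polynomial $\hat g$ vanishes at the gap-zero of the $b$-th associated polynomial $p_{N-1}^{(b)}$, and by Lemma \ref{HW} exactly one of $(P_{\nu N+b})$, $({\mathcal Q}_{\nu N+b})$ picks up that zero, fixing the sign $\delta=\mp1$ accordingly; feeding this into \eqref{R2x3} finishes the case.

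The main obstacle I anticipate is purely computational bookkeeping rather than a conceptual gap: getting the explicit closed form of the inverse of the linear system \eqref{R1x1}--\eqref{R1x2} exactly right, in particular tracking how the moments $\int_E x^j\frac{dx}{h(x)}$ (which by \eqref{lemmaEx3} vanish for $j\le l-2$ and equal $1$ for $j=l-1$, and are explicit polynomials in the $c_i$'s of $H$ for larger $j$) enter, so that the constant $c_2-c_1^2-\delta\sqrt{H(y)}$ in \eqref{R2x3} comes out with the correct signs and no stray terms; the analogous three-interval computation in the next corollary will be heavier still. The polynomial-preimage subcase also requires one to know that the limiting measures along residue classes are precisely the reflectionless measures of the associated polynomials, which I would quote from \cite{PehSIAM, PehIMNR} rather than reprove.
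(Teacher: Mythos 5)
Your proposal follows essentially the same route as the paper: the paper likewise inverts the linear system \eqref{R1x1}--\eqref{R1x2} to obtain $G(x)=x+\lim_\nu\alpha_{1+n_\nu}-c_1$ and $F(x)=x^2-c_1x+2\lim_\nu\lambda_{2+n_\nu}+c_1^2-c_2$, matches these against \eqref{thmMt1} via Theorem \ref{thm4.2}, and handles existence through the density statement behind Theorem \ref{thm4.1}b) in the non-preimage case while simply citing \cite{Peh} for the polynomial-preimage case. The only discrepancy is bookkeeping you already flag as needing care: for $l=2$ one has $\int_E dx/h=0$, $\int_E x\,dx/h=1$, $\int_E x^2\,dx/h=c_1$ (your sketch shifts these by one index), and carrying the computation through is exactly what produces the constants in \eqref{R2x3}.
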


\begin{proof}
Solving the system \eqref{R1x1} and \eqref{R1x2} of equations we obtain that
\begin{equation}\label{R2x1}
    G(x) = x + \lim\limits_\nu \alpha_{1 + n_\nu} - c_1 {\rm \ and \ }
    F(x) = x^2 - c_1 x + 2 \lim\limits_\nu \lambda_{2 + n_\nu} + c_1^2 - c_2
\end{equation}

By \eqref{thmMt1} and the statements of Theorem \ref{thm4.2} the assertion follows, if $E$
is not the inverse image.

In the case when $E$ is the inverse image under a polynomial see \cite{Peh}.
\end{proof}

In the case when $E = [a_1,a_2] \cup [a_3,a_4] \cup [a_5,a_6]$ the solution of the
system of equations \eqref{R1x1} and \eqref{R1x2} yields
\begin{equation}\label{R2x2}
\begin{split}
    G(x)
    & = x^2 + (\tilde{\alpha}_{1 + n_\nu} - c_1)x + \tilde{\lambda}_{2 + n_\nu} +
            \tilde{\lambda}_{1 + n_\nu} + \tilde{\alpha}^2_{1 + n_\nu} -
            \tilde{\alpha}_{1 + n_\nu} c_1 + c_1^2 -c_2 \\
    F(x)
    & = x^3 - c_1 x^2 + (2 \tilde{\lambda}_{2 + n_\nu} + c_1^2 - c_2)x + 2
             \tilde{\lambda}_{2 + n_\nu}
            (\tilde{\alpha}_{2 + n_\nu} + \tilde{\alpha}_{1 + n_\nu} - c_1) \\
    & \ \ \ - (c_1^3 - 2 c_1 c_2 + c_3)\\
\end{split}
\end{equation}
where tilde denotes the limits, that is, $\tilde{\alpha}_{1 + n_\nu} :=
\lim\limits_\nu \alpha_{1 + n_\nu},... .$ Equating coefficients with the polynomials
in \eqref{thmMt1} gives easily the condition on the limits of the recurrence
coefficients such that $y_j,$ $y_j \in (a_{2j},a_{2j+1}), j = 1,2,$ are common
accumulation points of zeros of $(P_{n_\nu} {\mathcal Q}_{n_\nu})$ and
$(P_{(1 + \delta_j)/2 + n_\nu}$ ${\mathcal Q}_{(1 + \delta_j)/2 +
n_\nu}), j = 1,2,$ for given $\delta_j \in \{ \pm 1 \}, j = 1,2.$

To obtain a complete picture of the behaviour of the accumulation points of the recurrence
coefficients let us first show that there is a unique correspondence to that ones of the
moments of the Green's functions.

\begin{prop}\label{propP1}
a) Let $m \in \mathbb N,$ $({\bf x}_m, {\bf y}_m) := ( x_{[\frac{m}{2}]+1},...,x_1,...,$
$x_{- [\frac{m-1}{2}] + 1},$ $y_{[\frac{m-1}{2}] + 2},$ $...,y_2,...,y_{- [\frac{m}{2}]+2}),$
where $x_j, y_j \in \mathbb R,$ and put for $k,j \in {\mathbb N}_0,$ $j \geq k,$
\begin{equation}\label{X}
\begin{split}
  I_{j,k} := I_{j,k}({\bf x}_m, {\bf y}_m) \ = \sum\limits_{\substack{-1 \leq k_i \leq 1\\
             i = 1,2,...,j, \\ \sum_{i=1}^j  k_i = k}} z_{0,k_1} z_{k_1,k_1 + k_2} ...
             z_{k_1 +...+ k_{j-1},k_1 +...+ k_j}
\end{split}
\end{equation}
where
\begin{equation*}
z_{k,j} =
\left\{
\begin{split}
\sqrt{y_{j + 1}} & \ k = j - 1\\
x_{j+1}          & \ k = j \\
\sqrt{y_{j + 2}} & \ k = j + 1
\end{split}
\right.
\end{equation*}
Then
\begin{equation*}
\begin{split}
    {\mathcal I}_m:  & \ {\mathbb R}^m \times {\mathbb R}^m_{+} \rightarrow {\mathbb R}^{2 m} \\
                     & ({\bf x}_m, {\bf y}_m) \longmapsto (I_{1,0},\sqrt{y_2}I_{1,1},I_{2,0},
                       \sqrt{y_2}I_{2,1},...,I_{m,0},\sqrt{y_2}I_{m,1})
\end{split}
\end{equation*}
is a continuous one to one map.

b) Let $\sigma$ be a positive measure and suppose that
$(\alpha_{1+n}(d \sigma),\lambda_{2+n}(d \sigma))$ is bounded and
$(\lambda_{2 + n}(d \sigma))_{n \in \mathbb N}$ is bounded away from zero.
Let $m\in \N$ be fixed, and put $(\mbox{\boldmath$\alpha$}^m_{1+n}(d \sigma),$
$\mbox{\boldmath$\lambda$}^m_{2+n}(d \sigma))_{n \in \mathbb N}$ $:=$
$( \alpha_{[\frac{m}{2}]+1+n}(d \sigma),...,$ $\alpha_{1+n}(d \sigma),...,$
$\alpha_{-[\frac{m-1}{2}]+1+n}(d \sigma),$ $\lambda_{[\frac{m-1}{2}]+2+n}(d \sigma),$ $...,$
$\lambda_{2+n}(d \sigma),$ $...,$ $\lambda_{-[\frac{m}{2}]+2+n}(d\sigma) )_{n \in \mathbb N}$. \\
Then for $n > m$
\begin{equation}
\begin{split}
  & {\mathcal I}_m((\mbox{\boldmath$\alpha$}^m_{1+n}(d \sigma),
    \mbox{\boldmath$\lambda$}^m_{2+n}(d \sigma)) =
     (\int x P^2_n d \sigma, \sqrt{\lambda_{2+n}(d \sigma)} \int x P_n P_{1+n} d \sigma, \\
  &   \quad \ \ \qquad \qquad \qquad \qquad ..., \int x^m P^2_n d\sigma,
    \sqrt{\lambda_{2+n}(d \sigma)} \int x^m P_n P_{1+n} d \sigma)
\end{split}
\label{eq-P1}
\end{equation}
and ${\mathcal I}_m$ is a homeomorphism between the set of accumulation points of the two sequences.
\end{prop}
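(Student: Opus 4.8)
The plan is to prove part a) first — that $\mathcal I_m$ is a continuous, injective map on $\mathbb R^m\times\mathbb R^m_+$ — and then deduce part b) by combining the algebraic identity \eqref{eq-P1} with the boundedness hypotheses and the injectivity from a).

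For part a), continuity of $\mathcal I_m$ is immediate: each $I_{j,k}$ is a finite sum of products of the entries $z_{\cdot,\cdot}$, which are polynomials in the $x_i$ and $\sqrt{y_i}$, and on $\mathbb R^m\times\mathbb R^m_+$ the maps $y_i\mapsto\sqrt{y_i}$ are continuous; so the whole thing is continuous. For injectivity the plan is to exhibit an explicit left inverse by \emph{triangular elimination}. Observe that $I_{1,0}=x_1$ recovers the ``central'' $\alpha$; next $\sqrt{y_2}\,I_{1,1}=\sqrt{y_2}\cdot\sqrt{y_2}=y_2$ recovers the central $\lambda$. Then $I_{2,0}$ equals $x_0 x_1 + x_1^2 + \dots$ (a polynomial in already-recovered quantities plus exactly one new entry, either $x_0$ or $x_2$ depending on the indexing convention $[\tfrac m2]$), and $\sqrt{y_2}\,I_{2,1}$ brings in exactly one new $\sqrt{y}$ appearing linearly with a nonzero (already-known, positive) coefficient. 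Proceeding inductively on $j$, at the $j$-th stage the pair $(I_{j,0},\sqrt{y_2}I_{j,1})$ involves one new $x$-entry and one new $y$-entry, each occurring with a coefficient that is a nonzero polynomial in the previously determined entries — here one uses that the $\sqrt{y_i}$ are genuinely positive so no coefficient degenerates. This yields that $({\bf x}_m,{\bf y}_m)$ is uniquely determined by the image, i.e. $\mathcal I_m$ is one-to-one. The key structural input is the observation that the path-sum \eqref{X} is organized so that ``reaching out one index further'' costs exactly one new $k_i=\pm1$, which is what makes the elimination triangular.

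For part b), the identity \eqref{eq-P1} is the classical expansion of $\int x^k P_n^2\,d\sigma$ and $\sqrt{\lambda_{2+n}}\int x^k P_n P_{1+n}\,d\sigma$ in terms of walks of length $k$ on the Jacobi matrix (a weighted path count in the tridiagonal matrix), which is exactly what $I_{k,0}$ and $\sqrt{y_2}I_{k,1}$ encode after the substitution $z_{\cdot,\cdot}\mapsto(\alpha,\sqrt\lambda)$; this is a routine verification, citing \cite{Nev} for the closed formula. Given this, the map sending the recurrence-coefficient block $(\mbox{\boldmath$\alpha$}^m_{1+n},\mbox{\boldmath$\lambda$}^m_{2+n})$ to the moment block is literally $\mathcal I_m$ evaluated at that point. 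Since $(\alpha_{1+n},\lambda_{2+n})$ is bounded and $\lambda_{2+n}\ge c>0$, the recurrence-coefficient blocks live in a fixed compact subset $K\subset\mathbb R^m\times[c,\infty)^m\subset\mathbb R^m\times\mathbb R^m_+$; $\mathcal I_m$ restricted to $K$ is a continuous bijection onto its image from a compact space to a Hausdorff space, hence a homeomorphism onto $\mathcal I_m(K)$. A subsequence of recurrence-coefficient blocks converges iff its $\mathcal I_m$-image converges (forward direction: continuity; reverse: if the images converge, any two subsequential limits of the preimages lie in $K$ and have the same $\mathcal I_m$-image, hence coincide by injectivity, so the preimages themselves converge). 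Therefore $\mathcal I_m$ carries accumulation points to accumulation points bijectively and bicontinuously, which is the assertion.

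The main obstacle is the bookkeeping in part a): one must set up the induction on $j$ carefully enough to see that each step genuinely introduces only one new $x$ and one new $y$, with the correct (nonvanishing) leading coefficients, given the asymmetric floor/ceiling indexing $x_{[\frac m2]+1},\dots,x_{-[\frac{m-1}{2}]+1}$. Once that triangular structure is made explicit, injectivity — and hence the whole homeomorphism statement via the compactness argument — follows without further difficulty.
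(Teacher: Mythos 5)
Your proposal is correct and follows essentially the same route as the paper: injectivity of $\mathcal I_m$ by induction on $j$, using that $I_{j,0}$ and $\sqrt{y_2}\,I_{j,1}$ each introduce exactly one new variable (the lowest- and highest-indexed one, respectively) appearing linearly with a positive, already-determined coefficient, combined with Nevai's walk formula for \eqref{eq-P1} and a compactness argument for the homeomorphism in part b). The only slip is in your illustrative example: for $j=2$ one has $I_{2,0}=x_1^2+y_2+y_1$, so the new entry there is $y_1$ (a new $\lambda$-type variable), while the new $x$ comes from $\sqrt{y_2}\,I_{2,1}=y_2(x_1+x_2)$ --- but your general inductive claim, including the parity-dependent bookkeeping, is stated correctly.
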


\begin{proof}
a) First let us note that by \eqref{X}
$I_{j,k},$ $k \in \{ 0,1 \},$ $j \in \{ 1,...,m \},$
depends on the variables $({\bf x}_j,{\bf y}_j)$ only and that
\begin{equation}\label{xp2}
I_{m,0}({\bf x}_m,{\bf y}_m) =
\left\{
    \begin{split}
         & y_{ - \frac{m}{2} + 2 } \prod\limits_{i = - \frac{m}{2} + 3}^{1} y_i +
           e_1({\bf x}_{m-1},{\bf y}_{m-1})
         & {\rm if \ } m {\rm \ is \ even} \\
         & x_{ - (\frac{m-1}{2}) + 1 } \prod\limits_{i = - (\frac{m-1}{2}) + 2}^{1} y_i +
           e_2({\bf x}_{m-1},{\bf y}_{m-1})
         & {\rm if \ } m {\rm \ is \ odd} \\
    \end{split}
\right.
\end{equation}
since in \eqref{X} the lowest possible indices $i$ of $x_i, y_i$ appear only for the choice
$(k_1,...,k_m) = (-1,...,-1,1,...,1),$ where $\mp 1$ appears $m/2-$times, respectively for
$(-1,...,-1,0,1,...,1)$ where $\mp 1$ appears $(m-1)/2$ times. By the way, the highest possible
indices are obtained by reversing the order, that is, for the choice $(1,...,1,-1,...,-1),$
respectively, $(1,...,1,0,-1,...,-1)$ which is needed later.

Furthermore for $m > 1$
\begin{equation}\label{xp3}
   \frac{I_{m,1}({\bf x}_m,{\bf y}_m)}{\sqrt{y_2}} =
   \left\{
      \begin{split}
         & x_{ \frac{m}{2} + 1 } \prod\limits_{i = 3}^{\frac{m}{2} + 1} y_i \ +
           \ e_3({\bf x}_{m-1},{\bf y}_{m-1})
         & {\rm if \ } m {\rm \ is \ even} \\
         & y_{ \frac{m-1}{2} + 2 } \prod\limits_{i = 3}^{ \frac{m-1}{2} + 1} y_i \ +
           e_4({\bf x}_{m-1},{\bf y}_{m-1})
         & {\rm if \ } m {\rm \ is \ odd} \\
      \end{split}
   \right.
\end{equation}
since in \eqref{X} the highest possible indices $i$ of $x_i, y_i$ appear only for the choice
$(k_1,...,k_m) = (1,...,1,0,-1,...,-1),$ where $+ 1$ appears $m/2$ and $-1$ appears $(m-2)/2$
times, respectively, for $(1,...,1,-1,...,-1)$ where $\pm 1$ appears $(m \pm 1)/2$ times.
Now we are able to prove the assertion, that is,
\begin{equation}\label{xp33}
   {\mathcal I}_m({\bf x}_m,{\bf y}_m) = {\mathcal I}_m(\tilde{{\bf x}}_m,
   \tilde{{\bf y}}_m) {\rm \ implies \ }
   ({\bf x}_m,{\bf y}_m) = (\tilde{{\bf x}}_m,\tilde{{\bf y}}_m)
\end{equation}
by induction arguments with respect to $m.$ Since $I_{j,k},$ $k \in \{ 0,1 \},$
depends for $j = 0,...,m-1$ on $(\tilde{{\bf x}}_{m-1},\tilde{{\bf y}}_{m-1})$
only it follows by the induction hypothesis that
\begin{equation}\label{xp4}
   ({\bf x}_{m-1},{\bf y}_{m-1}) = (\tilde{{\bf x}}_{m-1},\tilde{{\bf y}}_{m-1})
\end{equation}
Thus it remains to be shown that
\begin{equation}\label{xp5}
    x_{\pm [\frac{m}{2}]+1} = \tilde{x}_{\pm [\frac{m}{2}]+1} {\rm \ and \ }
    y_{\mp [\frac{m}{2}]+2} = \tilde{y}_{\mp [\frac{m}{2}]+2}
\end{equation}
if $m$ is even, respectively odd. Since by \eqref{xp33}
$$ I_{m,0}(\tilde{{\bf x}}_{m},\tilde{{\bf y}}_{m}) = I_{m,0}({\bf x}_{m},{\bf y}_{m})
   {\rm \ and \ } I_{m,1}(\tilde{{\bf x}}_{m},\tilde{{\bf y}}_{m}) = I_{m,1}({\bf x}_{m},
   {\bf y}_{m})  $$
it follows by \eqref{xp2} and \eqref{xp3} in conjunction with \eqref{xp4} that \eqref{xp5}
holds and thus part a) is proved.

b) In \cite{Nev} it has been shown that, $n > j$,
\begin{equation}\label{xp6}
    I_{j,k}(\mbox{\boldmath$\alpha$}^m_{1+n}(d \sigma),
\mbox{\boldmath$\lambda$}^m_{2+n}(d \sigma)) = \int x^j P_{n} P_{k + n} d\sigma
\end{equation}
which gives \eqref{eq-P1}. By the assumptions on the recurrence coefficients it follows that
the set of accumulation points is a compact set contained in ${\mathbb R}^m \times
{\mathbb R}^m_{+}$ and thus its image under ${\mathcal I}_m$ is a compact set contained in the
range of ${\mathcal I}_m,$ which is by \eqref{xp6} and continuity of ${\mathcal I}_m$ equal to
the set of accumulation points of $(\int x P^2_n d \sigma,$ $\sqrt{\lambda_{2+n}} \int x P_n
P_{1 + n},...,$ $\int x^m P^2_n d \sigma,$ $\sqrt{\lambda_{2 + n}} \int x^m P_n P_{1 + n})
_{n \in \mathbb N}.$
\end{proof}

We point out that the starting point of Proposition \ref{propP1}a) was formula \eqref{xp6}
due to Nevai \cite{Nev}. Combining Theorem \ref{thm4.1} a) and Proposition \ref{propP1} b)
we obtain

\begin{thm}\label{5.4new}
Let $\mu$ be given by \eqref{7_2} with $w \in Sz(E)$ and put
$(\mbox{\boldmath$\alpha$}^{l-1}_{1+n}(d \mu),$
$\mbox{\boldmath$\lambda$}^{l-1}_{2+n}(d \mu))_{n \in \mathbb N} := $
$( \alpha_{[\frac{l-1}{2}]+1+n}(d \mu),...,$ $\alpha_{1+n}(d \mu),...,$
$\alpha_{-[\frac{l-2}{2}]+1+n}(d \mu),$ \newline $\lambda_{[\frac{l-2}{2}]+2+n}(d \mu),$
$...,$ $\lambda_{2+n}(d \mu),$ $...,$ $\lambda_{-[\frac{l-1}{2}]+2+n}(d\mu) )_{n \in \mathbb N}$.

a) $(\mbox{\boldmath$\alpha$}^{l-1}_{1+n_\nu}(d \mu),$
$\mbox{\boldmath$\lambda$}^{l-1}_{2+n_\nu}(d \mu))_{\nu \in \mathbb N}$ converges
if and only if
\begin{equation}\label{tilde-1}
\begin{split}
      \left( \left( n_\nu - (l - 1)/2 \right) \omega_k(\infty) +
      \frac{1}{\pi} \int_E \log \left( w(\xi) \right) \right.
    & \frac{\partial \omega_k(\xi)}
      {\partial n_\xi^{+}} d\xi -
      \left. \sum\limits_{j=1}^{m} \omega_k(d_j) \right)_{\nu \in \mathbb N} \\
\end{split}
\end{equation}
converges modulo $1$ for $k = 1,...,l-1$. Furthermore ${\mathcal T} \circ {\mathcal A}
\circ {\tau}^{-1} \circ {\mathcal I}_{l-1}$ is a homeomorphism between the sets of
accumulation points, where $\mathcal A$ is the Abel map from \eqref{ct}, $\tau$ is given
in Theorem \ref{thm4.1} and ${\mathcal T}$ is the map between ${\rm Jac \ } {\mathfrak R}
/{\mathbb R}$ and the real torus $[0,1]^{l-1}$.

b) ${\tau}^{-1} \circ {\mathcal I}_{l-1}$ is a homeomorphism from
the set of accumulation points of $(\mbox{\boldmath$\alpha$}^{l-1}_{1+n}(d \mu),$
$\mbox{\boldmath$\lambda$}^{l-1}_{2+n}(d \mu))_{n \in \mathbb N}$ into
the torus $\sf{X}_{j=1}^{l-1}$ $( [a_{2j}, a_{2j+1}]^+ \cup [a_{2j}, a_{2j+1}]^-)$,
if $1, \omega_1(\infty),...,\omega_{l-1}(\infty)$ are linearly independent over
$\mathbb Q.$
\end{thm}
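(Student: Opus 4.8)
The plan is to assemble the statement from the three machinery results already in hand: Theorem \ref{thm4.1} (the homeomorphism $\tau$ between accumulation points of solutions of \eqref{nw} and accumulation points of the truncated moment vector), Proposition \ref{propP1}b (the homeomorphism $\mathcal I_{l-1}$ between accumulation points of the centered recurrence-coefficient vector $(\mbox{\boldmath$\alpha$}^{l-1}_{1+n},\mbox{\boldmath$\lambda$}^{l-1}_{2+n})$ and of the truncated moment vector), and the equivalence of \eqref{t9} and \eqref{nw} via the Abel map $\mathcal A$ of \eqref{ct}. First I would check the hypotheses of Proposition \ref{propP1}b are met: for $\mu$ of the form \eqref{7_2} with $w\in\mathrm{Sz}(E)$, the recurrence coefficients are bounded (they converge up to $o(1)$ to real-analytic functions on a torus by \eqref{F1}) and the $\lambda$'s are bounded away from zero (stated after \eqref{G3}). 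Hence $\mathcal I_{l-1}$ is a well-defined homeomorphism onto its image, and composing with $\tau^{-1}$ from Theorem \ref{thm4.1}a gives a homeomorphism $\tau^{-1}\circ\mathcal I_{l-1}$ from the accumulation-point set of $(\mbox{\boldmath$\alpha$}^{l-1}_{1+n},\mbox{\boldmath$\lambda$}^{l-1}_{2+n})$ onto the accumulation-point set of the solutions $(\mathfrak x_{1,n},\dots,\mathfrak x_{l-1,n})$ of \eqref{nw}. This already proves part b) once one invokes, under the $\mathbb Q$-linear-independence hypothesis, the computation in the proof of Theorem \ref{thm4.1}b that the accumulation-point set of the solutions of \eqref{nw} is all of $\sf{X}_{j=1}^{l-1}([a_{2j},a_{2j+1}]^+\cup[a_{2j},a_{2j+1}]^-)$.

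For part a) I would start from Widom's condition in the form \eqref{nw}, or equivalently \eqref{t9}. Applying the Abel map $\mathcal A$ to a solution $(\mathfrak x_{1,n},\dots,\mathfrak x_{l-1,n})$ of \eqref{nw} gives, modulo the period lattice,
\begin{equation*}
   \mathcal A(\mathfrak x_{1,n},\dots,\mathfrak x_{l-1,n}) = -\Bigl(n-\tfrac{l-1}{2}\Bigr)\int_{\infty^-}^{\infty^+}\mbox{\boldmath$\varphi$}\,-\,\frac{i}{\pi}\int_E\mbox{\boldmath$\varphi$}^{+}\log w\,+\,\sum_{j=1}^{m}\int_{\mathfrak d_j^-}^{\mathfrak d_j^+}\mbox{\boldmath$\varphi$}.
\end{equation*}
Using \eqref{ep6t1}, \eqref{ep6t2} to translate the Abelian integrals into harmonic measures, the $k$-th coordinate of the right-hand side is, modulo periods, an affine-in-$n$ expression whose $n$-dependent part is $-(n-(l-1)/2)\,\omega_k(\infty)$ and whose constant part matches the quantity appearing in \eqref{tilde-1}; here ${\mathcal T}$ is the linear isomorphism $\mathrm{Jac}\,\mathfrak R/\mathbb R \to [0,1]^{l-1}$ that divides out the period matrix $B$ (i.e. ${\mathcal T}$ sends $B\overrightarrow m\mapsto$ integer lattice), so that ${\mathcal T}\circ\mathcal A$ turns "modulo periods" into "modulo $1$". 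Thus convergence of $(\mathfrak x_{1,n_\nu},\dots,\mathfrak x_{l-1,n_\nu})$ is equivalent, via the homeomorphism ${\mathcal T}\circ\mathcal A$, to convergence modulo $1$ of the vector in \eqref{tilde-1}. Chaining with the homeomorphisms $\tau$ and $\mathcal I_{l-1}$ already established, and with Theorem \ref{thm4.1}a (which says the solutions of \eqref{nw} converge along $(n_\nu)$ iff the truncated moment vector does), yields that $(\mbox{\boldmath$\alpha$}^{l-1}_{1+n_\nu},\mbox{\boldmath$\lambda$}^{l-1}_{2+n_\nu})$ converges iff \eqref{tilde-1} converges modulo $1$, and that ${\mathcal T}\circ\mathcal A\circ\tau^{-1}\circ\mathcal I_{l-1}$ is the desired homeomorphism between accumulation-point sets.

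The one genuinely delicate point — which I would treat carefully rather than wave through — is the passage from "modulo periods in $\mathrm{Jac}\,\mathfrak R/\mathbb R$" to "modulo $1$ in $[0,1]^{l-1}$": one must exhibit ${\mathcal T}$ explicitly as the homeomorphism induced by the change of basis $B^{-1}$ (legitimate since, as noted after \eqref{eq-38}, $B$ is a real invertible matrix on the real part of the Jacobian), verify that under ${\mathcal T}$ the generating vector $\int_{\infty^-}^{\infty^+}\mbox{\boldmath$\varphi$}$ of the $n$-dependent term maps precisely to $\mbox{\boldmath$\omega$}(\infty) = (\omega_1(\infty),\dots,\omega_{l-1}(\infty))$ modulo $1$ (this is exactly \eqref{ep6t1} read with $c=\infty$), and confirm that ${\mathcal T}$ descends to a well-defined homeomorphism of the quotient tori. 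Everything else is bookkeeping: matching the constant term in \eqref{tilde-1} against the $n$-independent part of the right-hand side of \eqref{nw} after applying \eqref{ep6t2}, and observing that a shift by the fixed constant $(l-1)/2\cdot\mbox{\boldmath$\omega$}(\infty)$ plus the $\log w$ and mass-point terms is a homeomorphism of the torus, so it does not affect which subsequences converge nor the topological type of the accumulation set. Finally, for part b) one simply drops ${\mathcal T}\circ\mathcal A$ (no longer needed once the target is identified with the Riemann-surface torus $\sf{X}_{j=1}^{l-1}([a_{2j},a_{2j+1}]^+\cup[a_{2j},a_{2j+1}]^-)$ itself) and invokes the $\mathbb Q$-independence together with Kronecker's theorem exactly as in the proof of Theorem \ref{thm4.1}b to see that the image is the full torus.
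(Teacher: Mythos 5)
Your proposal is correct and follows essentially the same route as the paper: reduce via Proposition \ref{propP1}b) and Theorem \ref{thm4.1}a) to convergence of the solutions of \eqref{nw}, then use the bijectivity of the Abel map together with the real invertible period matrix $B$ (this is exactly the paper's coordinates $t_{\kappa,n_\nu}$ with ${\mathcal A}(\cdot)=B(t+m)$) and Widom's condition \eqref{t9} to identify these coordinates modulo $1$ with the expression in \eqref{tilde-1}; part b) is likewise obtained by combining Theorem \ref{thm4.1}b) with Proposition \ref{propP1}b). The one "delicate point" you flag — passing from modulo periods to modulo $1$ via $B^{-1}$ — is precisely what the paper's equations \eqref{TT} and \eqref{f1} carry out.
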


\begin{proof}
a) By Proposition \ref{propP1}b) and Theorem \ref{thm4.1} a) the sequence \newline
$( \mbox{\boldmath$\alpha$}^{l-1}_{1 + n_\nu},
\mbox{\boldmath$\lambda$}^{l-1}_{2 + n_\nu} )_{\nu \in \mathbb N}$
converges if and only if $( {\mathfrak x}_{1,n_\nu},...,
{\mathfrak x}_{l-1,n_\nu})_{\nu \in \mathbb N}$ converges,
where $( {\mathfrak x}_{1,n_\nu},...,{\mathfrak x}_{l-1,n_\nu})$
is given by \eqref{nw}. Next let us recall that, by the form of ${\rm Jac \ } {\mathfrak R}/ {\mathbb R}$ and the bijectivity
of the Abel map $\mathcal A$, for every $( {\mathfrak x}_{1},...,{\mathfrak x}_{l-1}) \in \sf{X}_{j=1}^{l-1}$ $( [a_{2j}, a_{2j+1}]^+ \cup [a_{2j}, a_{2j+1}]^-)$ there is an unique $(t_1,...,t_{l-1}) \in [-1/2,1/2]^{l-1}$ such that
\begin{equation}\label{TT}
{\mathcal A}( {\mathfrak x}_{1},...,
{\mathfrak x}_{l-1}) = (B_{i j}) \left((t_1,...,t_{l-1})^t + (m_1,...,m_{l-1})^t \right)
\end{equation}
where $m_\kappa \in \Z$.
Using \eqref{ep6t1} we obtain

$$ \sum_{\kappa = 1}^{l-1} \left( \frac{1}{2} \sum_{j=1}^{l-1} \delta_{j,n_\nu}
\omega_\kappa(x_{j,n_\nu}) \right) B_{k \kappa} =
\sum_{j=1}^{l-1} \frac{1}{2} \int_{\mathfrak x _{j,n_\nu}^*}^{\mathfrak x _{j,n_\nu}}  \varphi_k =
\sum_{\kappa = 1}^{l-1} t_{\kappa,n_\nu} B_{k \kappa}$$
with $t_{\kappa,n_\nu} \in [-1/2,1/2]$ for $k = 1,...,l-1,$ hence
\begin{equation}
\label{f1}
 t_{\kappa,n_\nu} = \frac{1}{2} \sum_{j=1}^{l-1} \delta_{j,n_\nu} \omega_{\kappa}(x_{j,n_\nu}) {\rm \ modulo \ } 1
 \end{equation}
Since, by \eqref{TT}, $( {\mathfrak x}_{1,n_\nu},...,
{\mathfrak x}_{l-1,n_\nu})_{\nu \in \mathbb N}$ converges if and only if $(t_{1,n_\nu},...,t_{l-1,n_\nu})_{\nu \in \mathbb N}$
converges modulo $1$ the assertion follows by \eqref{t9} and \eqref{f1}.

b) Follows immediately by Theorem \ref{thm4.1} b) and Proposition \ref{propP1} b).
\end{proof}

Moreover under the assumptions of Theorem \ref{5.4new} b) the number
of accumulation points of the recurrence coefficients is infinite,
which has been proved for the isospectral torus in \cite{Luk-Peh} already,
see also \cite{Luk}.

\begin{cor}
Let $(n_\nu)$ be such that $({\boldsymbol \alpha}_{1+ n_\nu}^{l - 1}(d \mu),
{\boldsymbol \lambda}_{2 + n_\nu}^{l - 1}(d \mu))_{\nu \in \mathbb N}$
converges. Then the sequence $(n_{\nu + 1} - n_\nu)_{\nu \in \mathbb N}$
is unbounded if $1, \omega_1(\infty),...,\omega_{l - 1}(\infty)$ are linearly
independent over $\mathbb Q.$
\end{cor}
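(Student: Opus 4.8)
The plan is to reduce the statement, via Theorem~\ref{5.4new}, to a purely number‑theoretic fact about the sequence $(n_\nu{\boldsymbol\omega}(\infty))$ modulo $1$, and then argue by contradiction.

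First I would invoke Theorem~\ref{5.4new}~a): the convergence of $({\boldsymbol\alpha}^{l-1}_{1+n_\nu}(d\mu),{\boldsymbol\lambda}^{l-1}_{2+n_\nu}(d\mu))_{\nu\in\N}$ is equivalent to the convergence modulo $1$, for each $k=1,\dots,l-1$, of the sequence in \eqref{tilde-1}. Since all the terms of \eqref{tilde-1} other than $n_\nu\,\omega_k(\infty)$ are independent of $\nu$, and translation by a constant is a homeomorphism of $\R/\Z$, this is the same as the convergence modulo $1$ of $(n_\nu{\boldsymbol\omega}(\infty))_{\nu\in\N}$ in the torus $\R^{l-1}/\Z^{l-1}$; let ${\boldsymbol\theta}$ denote its limit. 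In particular the shifted sequence $(n_{\nu+1}{\boldsymbol\omega}(\infty))_{\nu\in\N}$ converges modulo $1$ to the same ${\boldsymbol\theta}$.

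Next, suppose for contradiction that $(n_{\nu+1}-n_\nu)_{\nu\in\N}$ is bounded. Because $(n_\nu)$ is strictly increasing, these differences are positive integers attaining only finitely many values, so there is an integer $d\ge 1$ and a subsequence $(\nu_j)$ with $n_{\nu_j+1}-n_{\nu_j}=d$ for all $j$. Along $(\nu_j)$ both $(n_{\nu_j}{\boldsymbol\omega}(\infty))$ and $(n_{\nu_j+1}{\boldsymbol\omega}(\infty))$ converge to ${\boldsymbol\theta}$ modulo $1$; since $\R^{l-1}/\Z^{l-1}$ is a topological group, subtraction is continuous, so $(n_{\nu_j+1}-n_{\nu_j}){\boldsymbol\omega}(\infty)=d\,{\boldsymbol\omega}(\infty)$ converges to ${\boldsymbol 0}$ modulo $1$. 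As the left‑hand side is the constant vector $d\,{\boldsymbol\omega}(\infty)$, this forces $d\,\omega_k(\infty)\in\Z$ for every $k=1,\dots,l-1$, contradicting the $\mathbb Q$‑linear independence of $1,\omega_1(\infty),\dots,\omega_{l-1}(\infty)$ (already $1$ and $\omega_1(\infty)$ being rationally independent excludes $d\,\omega_1(\infty)\in\Z$ with $d\ge 1$). Hence $(n_{\nu+1}-n_\nu)_{\nu\in\N}$ is unbounded.

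The only subtle point is the first step, i.e.\ checking that Theorem~\ref{5.4new} really does reduce the problem to the torus sequence $(n_\nu{\boldsymbol\omega}(\infty))$ and that ``the difference of two sequences that converge modulo $1$ to the same limit converges modulo $1$ to ${\boldsymbol 0}$'' is legitimate; both amount to elementary facts about the topological group $\R^{l-1}/\Z^{l-1}$. Everything after that is a one‑line pigeonhole argument together with the definition of $\mathbb Q$‑linear independence.
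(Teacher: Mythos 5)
Your proof is correct and follows essentially the same route as the paper: both reduce the statement via Theorem~\ref{5.4new}~a) to the convergence modulo $1$ of $(n_\nu\boldsymbol{\omega}(\infty))$ and then derive a contradiction from the fact that a bounded sequence of positive integer differences would force $d\,\omega_k(\infty)\in\Z$ for some $d\ge 1$, violating the rational independence. Your version is if anything slightly cleaner, since it works directly on the torus $\R^{l-1}/\Z^{l-1}$ rather than through the explicit identity involving $\delta_{j,n}\omega_k(x_{j,n})$ that the paper writes out.
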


\begin{proof}
By \eqref{t7} and Theorem \ref{5.4new} a) we get that the
RHS of the system of equations $k = 1,..., l - 1,$
\begin{equation}
\begin{split}
    & (n_{\nu + 1} - n_{\nu}) \omega_k(\infty) - 2 (m_{k,n_{\nu + 1}} - m_{k,n_\nu}) = \\
  = & \frac{1}{2} \sum_{j=1}^{l - 1} (\delta_{j,n_{\nu+1}} \omega_k(x_{j,n_{\nu + 1}}) -
                                      \delta_{j,n_{\nu}} \omega_k(x_{j,n_{\nu}} ) )
\end{split}
\end{equation}
tends to zero, where $m_k, n_{\nu + 1}, m_k, n_\nu \in \mathbb Z.$ Assume that
$(n_{\nu + 1} - n_\nu)$ is bounded and thus that the LHS takes on modulo $1$
a finite number of values only and is not equal to zero, since $\omega_k(\infty)$
must not be rational, which yields the desired contraction.
\end{proof}

In particular Theorem \ref{5.4new} holds true for measures from
the isospectral torus and thus holds true even for measures,
whose recurrence coefficients satisfy \eqref{F1}, which are described
in \cite{PehYud}. Therefore it looks like we could have restricted
to the isospectral torus. The problem is that for accumulation
points of zeros which is the other main point of our studies
there are no corresponding statements like \eqref{F1} available.

Thus the sequences $(n_\nu)$ for which the recurrence coefficients
$(\mbox{\boldmath$\alpha$}^{l-1}_{1+n_\nu}(d \mu),$
$\mbox{\boldmath$\lambda$}^{l-1}_{2+n_\nu}(d \mu))_{\nu \in \mathbb N}$
converge are determined by the harmonic measures $E$, only the values of
the accumulation points depend on $\mu$. We believe that the property
"$(\mbox{\boldmath$\alpha$}^{l-1}_{1+n_\nu}(d \sigma),$
$\mbox{\boldmath$\lambda$}^{l-1}_{2+n_\nu}(d \sigma))_{\nu \in \mathbb N}$
converges if and only if $(n_\nu \mbox{\boldmath$\omega$}(\infty)) _{\nu \in \mathbb N}$
converges modulo $1$" holds for a wide class of measures $\sigma$
whose essential support is $E$; loosely speaking that it is the
counterpart of the measures
whose essential support is a single interval and for which the recurrence
coefficients converge, for instance as in Rakhmanov's \cite{Rah2} and Denisov's \cite{Den} case.

As an immediate consequence of Thm. \ref{5.4new} and Prop. \ref{propP1}
in conjunction with Cor. \ref{corR31} we obtain
 \begin{cor} The Green's functions $(G(z,n_\nu,n_\nu))_{\nu \in \mathbb N}$
 and $(G(z,1+n_\nu,$ $n_\nu))_{\nu \in \mathbb N}$, defined in \eqref{G1} and
 \eqref{G2}, converge simultaneously uniformly on compact subsets of $\Omega $
 if and only if $(n_\nu \mbox{\boldmath$\omega$}(\infty)) _{\nu \in \mathbb N}$
 converges modulo $1$.
 \end{cor}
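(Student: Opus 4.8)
The plan is to identify the simultaneous uniform convergence of the two Green's functions with the convergence of the truncated moment vector of Theorem~\ref{thm4.1}, and then to quote Theorem~\ref{5.4new}~a). First I would record that $G(z,n,n)=P_n{\mathcal Q}_n$ and $G(z,1+n,n)=P_n{\mathcal Q}_{n+1}$ (see \eqref{G1}--\eqref{G2}) are normal families on compact subsets of $\Omega$ --- they are uniformly bounded there, since $\int P_n^2\,d\mu=1$ and, by Schwarz's inequality, $\bigl|\int P_{1+n}P_n/(z-x)\,d\mu\bigr|\le\operatorname{dist}(z,\operatorname{supp}\mu)^{-1}$ --- and that $\infty\in\Omega$. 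Hence a subsequence of either family converges uniformly on compacta of $\Omega$ if and only if all its Laurent coefficients at $z=\infty$ converge; by \eqref{G1}--\eqref{G2} this means that $(G(z,n_\nu,n_\nu))_\nu$ converges iff $\bigl(\int x^{j}P_{n_\nu}^2\,d\mu\bigr)_\nu$ converges for every $j\in\N$, and $(G(z,1+n_\nu,n_\nu))_\nu$ converges iff $\bigl(\int x^{j}P_{1+n_\nu}P_{n_\nu}\,d\mu\bigr)_\nu$ converges for every $j$ --- note in particular that the $z^{-2}$-coefficient of $G(z,n+1,n)$ is $\sqrt{\lambda_{n+2}}$.

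Next I would show that both Green's functions converging along $(n_\nu)$ is equivalent to convergence along $(n_\nu)$ of the truncated moment vector $\bigl(\int xP_{n}^2,\dots,\int x^{l-1}P_{n}^2,\sqrt{\lambda_{2+n}}\int xP_{1+n}P_{n},\dots,\sqrt{\lambda_{2+n}}\int x^{l-1}P_{1+n}P_{n}\bigr)$ of Theorem~\ref{thm4.1}. If both Green's functions converge, then by the first step $\sqrt{\lambda_{n_\nu+2}}$ converges, and since the $\lambda_n$'s are bounded away from $0$ its limit is positive, so every product $\sqrt{\lambda_{n_\nu+2}}\int x^{j}P_{1+n_\nu}P_{n_\nu}$ converges; in particular the truncated vector converges. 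Conversely, if the truncated vector converges then its entry $\sqrt{\lambda_{2+n_\nu}}\int xP_{1+n_\nu}P_{n_\nu}=\lambda_{n_\nu+2}$ converges to a positive limit, so $\sqrt{\lambda_{n_\nu+2}}$ converges to a positive limit too, and by Corollary~\ref{corR31} --- relations \eqref{R31x1} and \eqref{R31x2} --- together with an induction on the degree, all moments $\int x^{j}P_{n_\nu}^2$ and $\sqrt{\lambda_{n_\nu+2}}\int x^{j}P_{1+n_\nu}P_{n_\nu}$ (hence also $\int x^{j}P_{1+n_\nu}P_{n_\nu}$, after dividing by the positive limit of $\sqrt{\lambda_{n_\nu+2}}$) converge for every $j$; so by the first step both Green's functions converge uniformly on compacta of $\Omega$.

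Finally I would chain this with the equivalences already established. By Proposition~\ref{propP1}~b) the continuous injection ${\mathcal I}_{l-1}$ maps $(\mbox{\boldmath$\alpha$}^{l-1}_{1+n},\mbox{\boldmath$\lambda$}^{l-1}_{2+n})$ onto the truncated moment vector, and since the set of accumulation points of the former sequence is compact and ${\mathcal I}_{l-1}$ is one to one on it, the two sequences converge along exactly the same subsequences (a second accumulation point of one would, by continuity, produce a second one of the other). By Theorem~\ref{5.4new}~a) this common convergence is in turn equivalent to convergence modulo $1$ of the vector \eqref{tilde-1}, which differs from $n_\nu\mbox{\boldmath$\omega$}(\infty)$ only by a fixed constant vector, hence to convergence modulo $1$ of $(n_\nu\mbox{\boldmath$\omega$}(\infty))_\nu$; stringing the steps together yields the assertion. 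The only step that is not pure bookkeeping is the induction in the middle paragraph: one must isolate the positive limit of $\sqrt{\lambda_{n_\nu+2}}$ first --- from the $j=1$ weighted moment, which equals $\lambda_{n_\nu+2}$ --- so that Corollary~\ref{corR31} can then be used to pass between the weighted and the unweighted moments of all higher degrees.
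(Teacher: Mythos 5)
Your proposal is correct and follows exactly the route the paper intends: the corollary is stated there as an immediate consequence of Theorem~\ref{5.4new}, Proposition~\ref{propP1} and Corollary~\ref{corR31}, and you have simply written out the chain (normal families and Laurent coefficients at $\infty$, passage between all moments and the truncated moment vector via \eqref{R31x1}--\eqref{R31x2} after isolating the positive limit of $\sqrt{\lambda_{2+n_\nu}}$, then ${\mathcal I}_{l-1}$ and Theorem~\ref{5.4new}~a)). No gaps; the care you take with the order of steps in the induction is exactly what the paper's own sufficiency argument in Theorem~\ref{thm4.1} does.
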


\begin{thm}\label{5.4} The following statements hold for the recurrence coefficients
of any measure $\mu$ of the form \eqref{7_2} with $w \in Sz(E)$. a) For every $k\in \Z$
$\lim\limits_\nu (\mbox{\boldmath$\alpha$}^{l-1}_{k+1+n_\nu},
\mbox{\boldmath$\lambda$}^{l-1}_{k+2+n_\nu})$ exists if $\lim\limits_\nu
(\mbox{\boldmath$\alpha$}^{l-1}_{1+n_\nu},
\mbox{\boldmath$\lambda$}^{l-1}_{2+n_\nu})$ exists.

b)Put $(\tilde{\mbox{\boldmath$\alpha$}}^{l-1}_{k+1+(n_\nu)},
\tilde{\mbox{\boldmath$\lambda$}}^{l-1}_{k+2+(n_\nu)}) :=$
$\lim\limits_\nu (\mbox{\boldmath$\alpha$}^{l-1}_{k+1+n_\nu},
\mbox{\boldmath$\lambda$}^{l-1}_{k+2+n_\nu})$. The limits
are related to each other by
$$(\tilde{\mbox{\boldmath$\alpha$}}^{l-1}_{k+1+(n_\nu)},
\tilde{\mbox{\boldmath$\lambda$}}^{l-1}_{k+2+(n_\nu)}) =
  \psi^k ((\tilde{\mbox{\boldmath$\alpha$}}^{l-1}_{1+(n_\nu)},
  \tilde{\mbox{\boldmath$\lambda$}}^{l-1}_{2+(n_\nu)}))$$
where $\psi$ is a continuous map on $\R^{l-1}\times \R_+^{l-1}$ which does not depend on
$\mu$ and $\psi^k$ denotes the $k$-th composition. (Note that $\eta^{-1} \circ \psi^k
\circ \eta,$ where $\eta = \mathcal{I}_{l-1}^{-1}\circ \tau ,$ maps the corresponding
points on the torus to each other). \\
c) The orbit $\{ \psi^k ((\tilde{\mbox{\boldmath$\alpha$}}^{l-1}_{1+(n_\nu)},
  \tilde{\mbox{\boldmath$\lambda$}}^{l-1}_{2+(n_\nu)})) : k \in {\mathbb N}_0 \}$ is dense
in the set of accumulation points of the sequence
$(\mbox{\boldmath$\alpha$}^{l-1}_{1 + n},
\mbox{\boldmath$\lambda$}^{l-1}_{2 + n})_{n \in \mathbb N}$ if
$1, \omega_1(\infty),...,\omega_{l-1}(\infty)$ are linearly independent over $\mathbb Q.$
\end{thm}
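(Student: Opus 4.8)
\emph{Plan of proof.} The strategy is to reduce all three statements to the dynamics, on the $(l-1)$-dimensional torus $\mathsf{T}:=\mathsf{X}_{j=1}^{l-1}([a_{2j},a_{2j+1}]^{+}\cup[a_{2j},a_{2j+1}]^{-})$, of the divisor $\mathfrak{x}_m:=(\mathfrak{x}_{1,m},\dots,\mathfrak{x}_{l-1,m})$ of solutions of \eqref{nw}, and then to transport that dynamics to the recurrence coefficients through two homeomorphisms already at our disposal, \emph{neither of which depends on $\mu$}: the map $\tau$ of Theorem \ref{thm4.1}, built only from $H,h,c_1$ via \eqref{thmMt1}, and the universal map $\mathcal{I}_{l-1}$ of Proposition \ref{propP1}. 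Put $\eta:=\mathcal{I}_{l-1}^{-1}\circ\tau\colon\mathsf{T}\to\R^{l-1}\times\R_{+}^{l-1}$ and, for brevity, $v_m:=(\mbox{\boldmath$\alpha$}^{l-1}_{1+m},\mbox{\boldmath$\lambda$}^{l-1}_{2+m})$, so that $v_{m+k}=(\mbox{\boldmath$\alpha$}^{l-1}_{k+1+m},\mbox{\boldmath$\lambda$}^{l-1}_{k+2+m})$. By Proposition \ref{propP1}b), Theorem \ref{thm4.1}a) and the argument in the proof of Theorem \ref{5.4new}a) one has the index-uniform identification: along any subsequence, $v_m\to\eta(\mathfrak{y})$ whenever $\mathfrak{x}_m\to\mathfrak{y}$, and conversely $(\mathfrak{x}_m)$ converges whenever $(v_m)$ does, the two limits being matched by the homeomorphism $\eta$. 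This is the one genuinely load-bearing point of the proof and should be isolated and proved first; everything that follows is assembly.

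From \eqref{nw} and the definition \eqref{ct} of the Abel map $\mathcal{A}$ one reads off $\mathcal{A}(\mathfrak{x}_m)=-m\int_{\infty^{-}}^{\infty^{+}}\mbox{\boldmath$\varphi$}+\tilde{\mbox{\boldmath$c$}}$ modulo periods, with $\tilde{\mbox{\boldmath$c$}}$ the $m$-independent part of the right-hand side of \eqref{nw}. Hence $\mathcal{A}(\mathfrak{x}_{m+1})-\mathcal{A}(\mathfrak{x}_m)=-\int_{\infty^{-}}^{\infty^{+}}\mbox{\boldmath$\varphi$}$ modulo periods, i.e.\ $\mathfrak{x}_{m+1}=\Psi(\mathfrak{x}_m)$ where
\[
\Psi\colon\mathsf{T}\to\mathsf{T},\qquad \Psi(\mathfrak{z}):=\mathcal{A}^{-1}\!\left(\mathcal{A}(\mathfrak{z})-\int_{\infty^{-}}^{\infty^{+}}\mbox{\boldmath$\varphi$}\right)
\]
is a homeomorphism which, like $\mathcal{A}$, depends only on $E$. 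Iterating, $\mathfrak{x}_{m+k}=\Psi^{k}(\mathfrak{x}_m)$ for every $k\in\Z$ (for $k<0$, read for $\nu$ so large that $n_\nu+k>l-1$, which is all we need). Part a) is then immediate: if $(v_{n_\nu})$ converges then so does $(\mathfrak{x}_{n_\nu})$, hence so does $(\Psi^{k}(\mathfrak{x}_{n_\nu}))=(\mathfrak{x}_{n_\nu+k})$ by continuity of $\Psi$, hence so does $(v_{n_\nu+k})$.

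For part b) set $\psi:=\eta\circ\Psi\circ\eta^{-1}$, a homeomorphism of the compact set $\eta(\mathsf{T})$ onto itself that leaves the set of accumulation points of $(v_m)_{m}$ invariant and, being a composition of maps involving only $E$, does not depend on $\mu$ (it extends to a continuous self-map of $\R^{l-1}\times\R_{+}^{l-1}$ by Tietze's theorem, but only its restriction to $\eta(\mathsf{T})$ is used). Letting $\nu\to\infty$ in $\mathfrak{x}_{n_\nu+k}=\Psi^{k}(\mathfrak{x}_{n_\nu})$, applying $\eta$, and using the index-uniform identification of the first paragraph yields, with $\mathfrak{y}:=\lim_\nu\mathfrak{x}_{n_\nu}$,
\[
(\tilde{\mbox{\boldmath$\alpha$}}^{l-1}_{k+1+(n_\nu)},\tilde{\mbox{\boldmath$\lambda$}}^{l-1}_{k+2+(n_\nu)})=\lim_\nu v_{n_\nu+k}=\eta\bigl(\Psi^{k}(\mathfrak{y})\bigr)=\psi^{k}\bigl((\tilde{\mbox{\boldmath$\alpha$}}^{l-1}_{1+(n_\nu)},\tilde{\mbox{\boldmath$\lambda$}}^{l-1}_{2+(n_\nu)})\bigr);
\]
and by construction $\eta^{-1}\circ\psi^{k}\circ\eta=\Psi^{k}$, the translation $\mathfrak{z}\mapsto\mathcal{A}^{-1}(\mathcal{A}(\mathfrak{z})-k\int_{\infty^{-}}^{\infty^{+}}\mbox{\boldmath$\varphi$})$ of $\mathsf{T}$, which is the parenthetical assertion.

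For part c), under the linear independence hypothesis the set of accumulation points of $(v_m)_{m}$ is all of $\eta(\mathsf{T})$ by Theorem \ref{5.4new}b) (equivalently Theorem \ref{thm4.1}b)), so since $\eta$ is a homeomorphism it is enough to show that $\{\Psi^{k}(\mathfrak{y}):k\in{\mathbb N}_{0}\}$ is dense in $\mathsf{T}$ for every $\mathfrak{y}\in\mathsf{T}$. Conjugating by $\mathcal{A}$ and then by the coordinatization $\mathcal{T}$ of ${\rm Jac \ }{\mathfrak R}/{\mathbb R}$ by $[0,1]^{l-1}$, and using \eqref{ep6t1} with $c=\infty$ to see that $\mathcal{T}$ carries $\int_{\infty^{-}}^{\infty^{+}}\mbox{\boldmath$\varphi$}$ to $\mbox{\boldmath$\omega$}(\infty)$, the orbit $\{\Psi^{k}(\mathfrak{y})\}$ becomes $\{\mathcal{T}\mathcal{A}(\mathfrak{y})-k\,\mbox{\boldmath$\omega$}(\infty)\bmod 1:k\in{\mathbb N}_{0}\}$; since $1,\omega_1(\infty),\dots,\omega_{l-1}(\infty)$ are linearly independent over $\mathbb Q$, Kronecker's theorem \cite{Hla,Kok} makes the forward orbit of the rotation by $-\mbox{\boldmath$\omega$}(\infty)$ dense in $[0,1]^{l-1}$, and translating by $\mathcal{T}\mathcal{A}(\mathfrak{y})$ preserves denseness. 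Applying the homeomorphism $\eta$ gives the denseness of $\{\psi^{k}((\tilde{\mbox{\boldmath$\alpha$}}^{l-1}_{1+(n_\nu)},\tilde{\mbox{\boldmath$\lambda$}}^{l-1}_{2+(n_\nu)})):k\in{\mathbb N}_{0}\}$ in the set of accumulation points of $(v_m)_{m}$, which is part c). In this scheme the shift computation via Abel's theorem and the Kronecker density argument are routine; the delicate point is the index-uniform identification flagged above.
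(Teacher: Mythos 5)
Your proof is correct, but it reaches parts a) and b) by a genuinely different route than the paper. The paper never introduces the torus rotation $\Psi$: for a) it argues that, by Corollary \ref{corR31}, the limits of $\int x^j P_{n_\nu}^2$ and $\sqrt{\lambda_{2+n_\nu}}\int x^j P_{1+n_\nu}P_{n_\nu}$ for $j\le l-1$ determine those for all $j\in\N$ by induction, and then invokes Proposition \ref{propP1}b) with larger $m$; for b) it constructs $\psi$ \emph{explicitly}, showing via \eqref{R31x2}, \eqref{xp3} and \eqref{tilde3} that the two new entries $\alpha_{[\frac{l-1}{2}]+2+n_\nu}$ and $\lambda_{[\frac{l-1}{2}]+3+n_\nu}$ of the shifted window have limits which are concrete continuous functions of the limit of the old window (formulas involving only the coefficients $h_j$ of $\sqrt{H^*}$, hence only $E$). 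You instead read off from \eqref{nw} and Abel's theorem that the divisor dynamics $\mathfrak{x}_m\mapsto\mathfrak{x}_{m+1}$ is translation by $-\int_{\infty^-}^{\infty^+}\mbox{\boldmath$\varphi$}$ on the Jacobian, and define $\psi=\eta\circ\Psi\circ\eta^{-1}$. What your approach buys is conceptual transparency: $\psi$ is manifestly conjugate to a rotation, parts a) and c) become one-line consequences, and the parenthetical remark in the statement is immediate. What it loses is the explicit algebraic form of $\psi$ (the paper notes after the theorem that $\psi$ ``can be obtained explicitly by Corollary \ref{corR31} and \eqref{xp6}''), and your $\psi$ is canonically defined only on $\eta(\mathsf{T})$ --- the Tietze/Dugundji extension to all of $\R^{l-1}\times\R_+^{l-1}$ is arbitrary; this is harmless because every accumulation set in question lies in $\eta(\mathsf{T})$, but it is worth stating that only the restriction is meant to be $\mu$-independent. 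The load-bearing step you correctly isolate --- that $(v_{n_\nu})$ and $(\mathfrak{x}_{n_\nu})$ converge along the same subsequences with limits matched by $\eta=\mathcal{I}_{l-1}^{-1}\circ\tau$ --- is exactly what Theorem \ref{thm4.1}a) together with Proposition \ref{propP1}b) supply (continuity plus injectivity of $\mathcal{I}_{l-1}$ on a compact set closes the converse direction), so the argument is complete. Part c) is in substance the same Kronecker argument as the paper's, merely carried out on your torus $\mathsf{T}$ rather than on $[0,1]^{l-1}$ via Theorem \ref{5.4new}.
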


\begin{proof} Part a) follows by Proposition \ref{propP1}
b) and the fact that by \eqref{R31x1} and \eqref{R31x2} the limits
$\lim\limits_\nu \int x^j P^2_{n_\nu}$ and $\lim\limits_\nu \sqrt{\lambda_{2+n_\nu}} \int
x^j P_{n_\nu} P_{1+n_\nu}$ exist for every $j \in \mathbb N$ if they exist for $j=0,...,l-1$.

b) Let us first show the assertion for $k=1.$ Taking a look at
$(\tilde{\mbox{\boldmath$\alpha$}}^{l-1}_{2+(n_\nu)},\tilde{\mbox{\boldmath$\lambda$}}
^{l-1}_{3+(n_\nu)})$
we see that we have to show only that
\begin{equation}\label{tilde1}
\begin{split}
    & \lim\limits_{\nu} \alpha_{[\frac{l-1}{2}] + 2 + n_\nu} =
      f(\tilde{\mbox{\boldmath$\alpha$}}^{l-1}_{1+(n_\nu)},
      \tilde{\mbox{\boldmath$\lambda$}}^{l-1}_{2+(n_\nu)}) {\rm \ and \ }\\
    & \lim\limits_{\nu} \lambda_{[\frac{l-1}{2}] + 3 + n_\nu} =
      g(\tilde{\mbox{\boldmath$\alpha$}}^{l-1}_{1+(n_\nu)},
      \tilde{\mbox{\boldmath$\lambda$}}^{l-1}_{2+(n_\nu)}),
\end{split}
\end{equation}
where $f,g$ are continuous functions. By \eqref{R31x2} and Proposition \ref{propP1} b)
$$ \lim\limits_{\nu} \sqrt{\lambda_{2 + n_\nu}} \int x^l P_{1+n_\nu} P_{n_\nu} =
   h_1(\tilde{\mbox{\boldmath$\alpha$}}^{l-1}_{1+(n_\nu)},
   \tilde{\mbox{\boldmath$\lambda$}}^{l-1}_{2+(n_\nu)})   $$
and thus by \eqref{xp3}, recall \eqref{xp6},
\begin{equation}\label{tilde2}
\begin{split}
   & \lim\limits_\nu \alpha_{\frac{l}{2} + 1 + n_\nu} =
     f_1(\tilde{\mbox{\boldmath$\alpha$}}^{l-1}_{1+(n_\nu)},
     \tilde{\mbox{\boldmath$\lambda$}}^{l-1}_{2+(n_\nu)}),
     {\rm \ respectively}, \\
   & \lim\limits_\nu \lambda_{[\frac{l-1}{2}] + 2 + n_\nu} =
     g_1(\tilde{\mbox{\boldmath$\alpha$}}^{l-1}_{1+(n_\nu)},
     \tilde{\mbox{\boldmath$\lambda$}}^{l-1}_{2+(n_\nu)}),
\end{split}
\end{equation}
if $l$ is even, respectively, $l$ is odd.

Next let us observe that by \eqref{X} and \eqref{xp6} for $n \geq l$
\begin{equation}\label{tilde3}
\int x^l P^2_{1+n} =
\left\{
\begin{split}
   & \lambda_{\frac{l}{2} + 2 + n} \prod\limits_{i = n + 3}^{n + 1 + \frac{l}{2}} \lambda_i +
     \tilde{e}_1 ( \alpha_{\frac{l}{2} + 1 + n}, \mbox{\boldmath$\alpha$}^{l-1}_{1 + n},
     \mbox{\boldmath$\lambda$}^{l-1}_{2 + n} )
   & {\rm if \ } l {\rm \ is \ even} \\
   & \alpha_{(\frac{l-1}{2}) + 2 + n} \prod\limits_{i = n + 3}^{n + 2 + (\frac{l-1}{2})} \lambda_i +
     \tilde{e}_2 ( \lambda_{\frac{l-1}{2} + 2 + n}, \mbox{\boldmath$\alpha$}^{l-1}_{1 + n},
     \mbox{\boldmath$\lambda$}^{l-1}_{2 + n} )
   & {\rm if \ } l {\rm \ is \ odd}, \\
\end{split}
\right.
\end{equation}
where $\tilde{e}_1, \tilde{e}_2$ are continuous functions. By \eqref{R31x2}, \eqref{tilde3} applied to
$l-1,$ and Proposition \ref{propP1} b) it follows that
\begin{equation*}
\lim\limits_\nu \int x^l P^2_{1 + n_\nu} =
\left\{
\begin{split}
   & h_2 ( \tilde{\alpha}_{\frac{l-2}{2} + 2 + (n_\nu) },
     \tilde{\mbox{\boldmath$\alpha$}}^{l-1}_{1 + (n_\nu)},
     \tilde{\mbox{\boldmath$\lambda$}}^{l-1}_{2 + (n_\nu)} )
   & {\rm if \ } l {\rm \ is \ even} \\
   & h_3 ( \tilde{\lambda}_{\frac{l-1}{2} + 2 + (n_\nu)},
     \tilde{\mbox{\boldmath$\alpha$}}^{l-1}_{1 + (n_\nu)},
     \tilde{\mbox{\boldmath$\lambda$}}^{l-1}_{2 + (n_\nu)} )
   & {\rm if \ } l {\rm \ is \ odd} \\
\end{split}
\right.
\end{equation*}
Thus by \eqref{tilde3} and \eqref{tilde2} we obtain that
\begin{equation*}
\begin{split}
    & \lim\limits_{\nu} \lambda_{\frac{l}{2} + 2 + n_\nu} =
      g_2(\tilde{\mbox{\boldmath$\alpha$}}^{l-1}_{1+(n_\nu)},
      \tilde{\mbox{\boldmath$\lambda$}}^{l-1}_{2+(n_\nu)}),
      {\rm \ respectively, \ } \\
    & \lim\limits_{\nu} \alpha_{\frac{l-1}{2} + 2 + n_\nu} =
      f_2(\tilde{\mbox{\boldmath$\alpha$}}^{l-1}_{1+(n_\nu)},
      \tilde{\mbox{\boldmath$\lambda$}}^{l-1}_{2+(n_\nu)})
\end{split}
\end{equation*}
if $l$ is even, respectively, $l$ is odd, and the relations \eqref{tilde1} are proved.

For arbitrary $k$ the statement follows immediately by iteration using
induction arguments and the fact that the relations \eqref{tilde1} hold
for any $(n_\nu)$ for which we have convergence.

c) Since $ (n_\nu \mbox{\boldmath$\omega$}(\infty) -
\mbox{\boldmath$c$}) \underset{\nu \to \infty} \longrightarrow
\mbox{\boldmath$\gamma$}\quad \text{modulo}\; 1$ implies that
$(n_\nu +k)\mbox{\boldmath$\omega$}(\infty) -
\mbox{\boldmath$c$} \underset{\nu \to \infty} \longrightarrow \mbox{\boldmath$\gamma$}
+ k\mbox{\boldmath$\omega$}(\infty)\quad \text{modulo}\; 1$
the assertion follows by part b) and Thm. \ref{5.4new} using the fact that the RHS is
dense in $[0,1]^{l-1}$ with respect to $k$,
$k\in \Z$.
\end{proof}

The map $\psi$ can be obtained explicitly by Corollary \ref{corR31} and \eqref{xp6},
at least for small $l$. Most likely Theorem \ref{5.4} c) holds true without the assumption of linear
independence of $1, \omega_1(\infty),...,\omega_{l-1}(\infty).$
Note that Theorem \ref{5.4} is of so-called "oracle type", see \cite{Rem}.
Finally let us remark that we expect (investigations are on the way) that
Theorem \ref{thm4.2} and Theorem \ref{5.4new} hold true for so-called
homogeneous sets $E$ and a possible infinite set of mass points lying outside
$E$ and accumulating on $E$.

\end{document}